\documentclass[a4paper,11 pt]{amsart}

%Tamaños

%\linespread{1.2}
%
%\oddsidemargin=1cm
%
%\evensidemargin=1cm
%
%\topmargin=-1cm
%
%\textwidth=14cm
%
%\textheight=26cm
%
%\parskip 5pt
%\parindent 0.5pt

%\usepackage[left=1in,top=1in,right=1in,bottom=1in,nohead]{geometry}

\usepackage{amssymb}  %,amscd,amsthm,mathrsfs}
\usepackage{mathtools}      % improves amsmath
\usepackage{mathabx}        % makes many symbols (as $\leq$) more beautiful; must be loaded after mathtools 
\usepackage[bb=fourier,cal=euler,scr=rsfs]{mathalfa}	% selecting fancy math fonts
\usepackage{enumitem}       % special itemize with custom tags and labels that work 
\usepackage[colorlinks=true,backref=page]{hyperref} % Links in the pdf. Backref option: each bibliographical entry denotes where it was cited

\usepackage[ansinew]{inputenc}
%\usepackage[centertags]{amsmath}

% %%%%%%%%%%% changing backref options (each bibliographical entry indicates where it wa cited)
\renewcommand*{\backref}[1]{}
\renewcommand*{\backrefalt}[4]{\quad \tiny
  \ifcase #1 (\textbf{NOT CITED.})%
  \or    (Cited on page~#2.)%
  \else   (Cited on pages~#2.)%
  \fi}

%%%%%%%% Labelling settings
\setcounter{tocdepth}{2}    % table of contents
\setcounter{secnumdepth}{3}
\hypersetup{bookmarksdepth = 3} % Depth of sections/subs... to have bookmark links in the pdf;
                % requires the package hyperref.
                % If not specified, the value tocdepth is used.
\numberwithin{equation}{section}     % Makes labeled equations easier to find.

%%%%%%%% Global choices for lists (enumitem package)
\setlist[enumerate,1]{label={\upshape(\roman*)},ref=\roman*}
\setlist[enumerate,2]{label={\upshape(\alph*)},ref=\alph*}

%Letras

\newcommand{\Z}{\mbox{$\mathbb{Z}$}}

\newcommand{\R}{\mbox{$\mathbb{R}$}}

\newcommand{\T}{\mbox{$\mathbb{T}$}}

   \def\DD{{\mathbb D}}

 \def\RR{{\mathbb R}}  \def\TT{{\mathbb T}}
   
 \def\ZZ{{\mathbb Z}}

\def\cA{\mathcal{A}}  \def\cG{\mathcal{G}}  \def\cS{\mathcal{S}}
\def\cB{\mathcal{B}}    \def\cT{\mathcal{T}}
  \def\cI{\mathcal{I}}  
\def\cD{\mathcal{D}}    
    
\def\cF{\mathcal{F}}  \def\cL{\mathcal{L}}

%Teoremas

\newtheorem*{teo*}{Theorem}

\newtheorem{teo}{Theorem}[section]

\newtheorem{quest}{Question}

\newtheorem{claim}[teo]{Claim}
\newtheorem{lema}[teo]{Lemma}
\newtheorem{lemma}[teo]{Lemma}
\newtheorem{prop}[teo]{Proposition}
\newtheorem{proposition}[teo]{Proposition}
\newtheorem{definition}[teo]{Definition}

\theoremstyle{definition}

\theoremstyle{remark}

\newtheorem{remark}[teo]{Remark}

%%%
% \newcommand{\bi}{\begin{itemize}}
% \newcommand{\ei}{\end{itemize}}
% \newcommand{\itm}[1]{\item [(#1)]}

%Pruebas y fin de pruebas

% Algunos comandos

\newcommand{\eps}{\varepsilon}

\newcommand{\mt}{\widetilde{M}}

\newcommand{\wt}[1]{\widetilde{#1}}
\newcommand{\Ft}{\widetilde{\cF}}
\newcommand{\tild}[1]{\widetilde{#1}}
\newcommand{\suniv}{S^1_{univ}}
\newcommand{\tj}{S^1_{JSJ}}

% MARGENES y otras cosas para el formato

 %Este hace un margen
 %Este es por si sale del lado izquierdo

\title[Minimality in the universal circle]{Minimality of the action on the universal circle of uniform foliations}

\author[S.~Fenley]{Sergio R.\ Fenley} 
\address{Florida State University, Tallahassee, FL 32306}% \and \newline \indent Princeton University, Princeton, NJ 08540, USA}
%\curraddr{Princeton University, Princeton, NJ 08540, USA}
\email{fenley@math.fsu.edu}

\author[R.~Potrie]{Rafael Potrie} 
\address{Centro de Matem\'atica, Universidad de la Rep\'ublica, Uruguay}
\email{rpotrie@cmat.edu.uy}
\urladdr{http://www.cmat.edu.uy/~rpotrie/}

%\thanks{S.F was partially supported by Simons Foundation grant number 637554. R. P. was partially supported by CSIC 618, FCE-1-2017-1-135352. This work was completed while R.P. was serving as a Von Neumann fellow at IAS, funded by the Minerva Research Foundation Membership Fund and NSF DMS-1638352. The authors thank the referee for a very careful reading that helped improve the paper including spotting some inaccuracies. }

%\keywords{3-manifold topology, foliations, group actions.}
%\subjclass[2010]{}
\begin{document}

\begin{abstract}
Given a uniform foliation by Gromov
hyperbolic leaves on a $3$-manifold, we show that the action of the fundamental group on the universal circle is minimal and transitive on pairs of different points. 
We also prove two other results: we prove that general uniform Reebless foliations are $\R$-covered and we give a new description of the universal circle of $\R$-covered foliations with Gromov hyperbolic
leaves in terms of the JSJ decomposition of $M$. 
\end{abstract}

\maketitle

\section{Introduction}
Consider a Reebless foliation $\cF$ on a closed 3-manifold $M$ without 
spherical or projective plane leaves. This implies that the universal cover $\mt$ of $M$ is homeomorphic\footnote{We note that we will always work with $\mt$ as a Riemannian manifold where the metric is induced by lifting the metric of $M$ to the universal cover. As such, the manifold $\mt$ may be very different from $\mathbb{R}^3$ even if homeomorphic.} to $\mathbb{R}^3$ 
\cite{Palmeira} and that every leaf of $\cF$ is a properly embedded plane
\cite{Nov}. We denote by $\Ft$ to the lift of $\cF$ to $\mt$. 

A specific class of foliations are those called \emph{uniform} which means that in the universal cover, any two leaves are at finite Hausdorff distance from each other. 
See section \ref{ss.uniffol} for the several variations 
of the definition of uniform foliations.
Fibrations over the circle are one
obvious example. Much more generally, slitherings, introduced 
by Thurston in \cite{Thurston} (see also \cite{Calegari}) are examples of such foliations. 
In addition from any slithering
example one can construct other examples of uniform
foliations by blowing up some leaves into 
foliated interval bundles.
All of these examples of uniform foliations are what is
called $\R$-covered. 
Recall that a foliation is $\R$-covered if the \emph{leaf space} $\cL_\cF = \mt/_{\Ft}$ of $\Ft$ is homeomorphic to $\R$. 
We first prove:

\begin{teo}\label{teo-Rcovered}
A uniform Reebless foliation in a closed 
$3$-manifold $M$ is $\R$-covered. 
\end{teo}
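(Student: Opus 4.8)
The statement to prove is that a uniform Reebless foliation $\cF$ on a closed $3$-manifold $M$ is $\R$-covered, i.e. the leaf space $\cL_\cF = \mt/\Ft$ is homeomorphic to $\R$. Recall that in general the leaf space of a Reebless foliation is a simply connected (possibly non-Hausdorff) $1$-manifold. The goal is to rule out the non-Hausdorff points. The plan is to argue by contradiction: suppose $\cL_\cF$ is not Hausdorff, so there exist two leaves $L_1 \neq L_2$ that are non-separated from each other in the leaf space.

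Let me think about what I know about uniformity. Uniform means any two leaves of $\Ft$ are at finite Hausdorff distance in $\mt$. Now if $L_1$ and $L_2$ are non-separated, there is a sequence of leaves $K_n \to L_1$ and $K_n \to L_2$ (same sequence converging to both on the respective sides). Each pair $(L_1, K_n)$ and $(L_2, K_n)$ is at finite Hausdorff distance. But here is the key tension: non-separated leaves, while they might individually be at finite Hausdorff distance from nearby leaves, should force a contradiction with the global structure. Actually, the cleanest route: I recall that for Reebless foliations, $L_1$ and $L_2$ non-separated implies (after lifting) they bound a region, and one can find a transversal / use the structure of the non-Hausdorff leaf space.

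Here is the approach I would actually carry out. First, reduce to the case where $\cL_\cF$ is Hausdorff, because a simply-connected Hausdorff $1$-manifold is homeomorphic to $\R$ (the only other option, $S^1$, is excluded by simple connectivity). So it suffices to show Hausdorffness. Suppose not, and take $L_1, L_2$ non-separated, with $K_n$ converging to both. Since $\cF$ is uniform, $d_H(L_1, L_2) =: R < \infty$. Now the point is to derive a contradiction by looking at the transverse behavior: fix $x \in L_1$; there is $y \in L_2$ with $d(x,y) \le R$. Take a transversal arc $\tau$ through $x$. Since $K_n \to L_1$, for large $n$ the leaf $K_n$ crosses $\tau$ near $x$; but since $K_n \to L_2$ as well, $K_n$ must also accumulate on $L_2$, hence $K_n$ passes within distance $\to 0$ of $L_2$ near the corresponding point. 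The uniform bound $R$ is not used correctly yet — let me restate. The real contradiction comes from the fact that in a uniform foliation, the "ambient" picture is that all leaves are quasi-parallel; a non-separated pair $L_1, L_2$ together with the $K_n$ creates a branching in the leaf space that is incompatible with the leaves being uniformly Hausdorff-close. Concretely: the region between $K_n$ and $L_1$ (on one side) and between $K_n$ and $L_2$ (on the other side) are both "thin" (Hausdorff distance $\le$ something going to $0$ plus the branching), so $L_1$ and $L_2$ would have to be Hausdorff distance $0$ apart, forcing $L_1 = L_2$, a contradiction.

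So the key steps, in order: (1) State that $\cL_\cF$ is a simply connected $1$-manifold (possibly non-Hausdorff) by Reeblessness/Novikov, so it is $\R$ iff it is Hausdorff. (2) Assume for contradiction two non-separated leaves $L_1 \ne L_2$ with an approximating sequence $K_n$ tending to both. (3) Using uniformity, get $d_H(K_n, L_i) < \infty$ for each $i$ and each $n$; more importantly, use the convergence $K_n \to L_i$ in the leaf space to deduce that on any compact set the leaves $K_n$ converge (in $C^0$ or Hausdorff on compacts) to a sublamination containing pieces of $L_1$ and $L_2$. (4) Combine with uniformity: a uniform foliation that is not $\R$-covered would have leaves $L_1 \ne L_2$ with $d_H(L_1, L_2) = 0$; since leaves are closed (properly embedded planes), $d_H = 0$ forces $L_1 = L_2$ — contradiction. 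The main obstacle is step (3)–(4): making precise how "non-separated in the leaf space" plus "uniform" forces the Hausdorff distance between $L_1$ and $L_2$ to vanish. This requires carefully tracking that the transition region between $K_n$ and each $L_i$ has Hausdorff width tending to $0$, which one gets because $K_n$ exits every compact set on one side of $L_i$ — here one should use that $L_i$ is properly embedded and separates $\mt$, together with the uniform Hausdorff bound to prevent $K_n$ from "escaping" in an uncontrolled way. I would set this up using a fixed transversal and the product structure of foliation charts near $L_1$ and $L_2$, pushing the contradiction through the uniform constant.
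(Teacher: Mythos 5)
Your reduction to Hausdorffness of the leaf space is fine, but the heart of your argument --- steps (3)--(4), where you claim that non-separatedness of $L_1,L_2$ together with uniformity forces $d_H(L_1,L_2)=0$ --- does not work, and no amount of care with foliation charts will fix it. Convergence $K_n\to L_1$ and $K_n\to L_2$ in the leaf space is only convergence of \emph{compact pieces} of $K_n$ to compact pieces of each $L_i$: each $K_n$ has one region shadowing $L_1$, another shadowing $L_2$, and a transition region, so $d_H(K_n,L_i)$ does not tend to $0$ (it is at least on the order of the separation between $L_1$ and $L_2$, and in general need not even stay bounded). Uniformity only gives you that Hausdorff distances are \emph{finite}, never that they are small; and indeed two distinct non-separated leaves always have their pointwise distance bounded away from zero (they cannot meet a common foliation chart), so there is no contradiction to be extracted from ``thin regions.'' Non-separated pairs at finite positive Hausdorff distance are perfectly consistent with all the local considerations you invoke; the obstruction is global and much subtler.

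The paper's proof shows how much more is actually needed. First one proves (Proposition \ref{p.sepbdcompact}) that two non-separated leaves at finite Hausdorff distance must project to \emph{compact} leaves in $M$, via a deck-transformation/recurrence argument; then (Proposition \ref{p.compactbd}, using Hempel) that the region between them projects to a compact $[0,1]$-bundle; then, in the proof of the theorem, that this region is a dead end component bounded by tori or Klein bottles, reducing to a foliation of $\TT^2\times[0,1]$ whose boundary leaves are non-separated in the universal cover; and finally (Proposition \ref{p.t2i}) a holonomy analysis combined with Brittenham's theorem on essential laminations in Seifert fibered spaces shows such a foliation cannot be uniform, with a last Cayley-graph step comparing distances inside the region with distances in $\widetilde M$. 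None of this structure is present in your sketch, and the step you flag as ``the main obstacle'' is exactly where the real content lies; as written, the proposal has a genuine gap rather than a complete argument.
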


This result has no restriction on the intrinsic metric in the
leaves. 

Theorem \ref{teo-Rcovered} implies 
that all Reebless uniform foliations are obtained
from either slithering foliations or 
blow ups of slithering foliations as explained in \cite[Construction 9.14 and Theorem 9.15]{Calegari}
using a result of Thurston \cite[Theorem 2.7]{Thurston}.
The proof implicitly uses the fact that the foliation
is $\R$-covered. We provide a proof of the $\R$-covered
property here.

The requirement of Reebless in Theorem \ref{teo-Rcovered}
is not superfluous:
any foliation in the $3$-sphere $S^3$ (or any closed $M^3$ with
finite fundamental group) is uniform,
however none are $\R$-covered, because they have Reeb components. 
To prove Theorem \ref{teo-Rcovered} it is enough to show that the leaf space 
is Hausdorff (see e.g. \cite{Calegari, CanCon}, a quick account of the most relevant material is presented in \S \ref{ss.taut}). 

Some uniform foliations are quite special, for example linear foliations in $\mathbb{T}^3$ or in nilmanifolds. These foliations have leaves
that are parabolic. But for most uniform foliations, one can apply a beautiful result of Candel \cite{Candel} to see that there is a metric on $M$ making each leaf negatively curved (see e.g. \cite[\S 5.1]{FP-acc} for a specific statement). 

For our next result 
we will consider the following setting: $\cF$ will be a uniform foliation on a closed Riemannian 3-manifold $M$ such that the metric restricted to each leaf of $\cF$ is Gromov hyperbolic (in particular, it has to be Reebless since the torus does not admit a Gromov hyperbolic metric). We will call such foliations \emph{uniform hyperbolic} foliations. 

For such foliations, one can consider, for each leaf $L \in \Ft$ the circle at infinity $S^1(L)$ defined as the set of geodesic rays up to being a finite Hausdorff distance apart (see \S \ref{ss.boundaries}). The fact that the foliation is $\R$-covered is very useful to define a \emph{universal circle} $\suniv$ which is essentially a canonical way to identify all the $S^1(L)$ as one varies $L \in \Ft$. The precise definition will be given in \S \ref{ss.universal}. See also \cite{Thurston, Calegari, CalegariPA, Fen2002, CD, Frankel} among other places where universal circles are defined in even more general situations.  

Our main result is the following: 

\begin{teo}\label{teo-Minimal}
Let $\cF$ be a uniform hyperbolic foliation on a 3-manifold $M$. Then, the fundamental group $\pi_1(M)$ acts minimally on the universal circle $\suniv$. Moreover, the diagonal action on pairs of different points of $\suniv$ has dense orbits. 
\end{teo}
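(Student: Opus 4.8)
The plan is to exploit the tight relationship between the universal circle and the circles at infinity of individual leaves, together with the fact that the foliation is uniform (hence $\R$-covered by Theorem \ref{teo-Rcovered}). The key structural input is that for a uniform hyperbolic foliation, for each leaf $L\in\Ft$ there is a natural $\pi_1(M)$-equivariant (up to the appropriate identifications along the leaf space) monotone map $\suniv\to S^1(L)$, and conversely each $S^1(L)$ embeds (off a countable set) into $\suniv$; moreover these maps are \emph{almost injective} in the sense that the non-injectivity is carried by ``gaps'' coming from leaves that are not Gromov hyperbolic uniformly or from lozenges/product regions. So minimality of the $\pi_1(M)$-action on $\suniv$ should be reducible to a statement about the action of $\pi_1(L)$ (for a fixed leaf $L$) on $S^1(L)\cong\partial_\infty\tilde L$. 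The first step is therefore to set up these comparison maps carefully and record their equivariance and monotonicity properties.

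Next I would reduce to showing that the $\pi_1(M)$-orbit of any point $\xi\in\suniv$ is dense. Pick a leaf $L$; by the comparison map, $\xi$ determines a point (or a gap, i.e.\ a closed arc) $\bar\xi\subset S^1(L)$. If $L$ is a leaf whose stabilizer $\mathrm{Stab}_{\pi_1(M)}(L)=\pi_1(L)$ acts with a limit set equal to all of $S^1(L)$ — which happens exactly when $\tilde L$ is non-elementary Gromov hyperbolic, i.e.\ $\pi_1(L)$ is non-elementary — then the $\pi_1(L)$-orbit of $\bar\xi$ is already dense in $S^1(L)$, and pulling back through the comparison map plus a short argument handling the countably many gaps gives density of the $\pi_1(M)$-orbit of $\xi$ in $\suniv$. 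The remaining case is when every leaf has elementary (virtually cyclic or trivial) $\pi_1$; here a uniform hyperbolic foliation forces $M$ to be a very restricted manifold (the leaves being Gromov hyperbolic planes or cylinders), and one analyzes these directly — for instance, when leaves are hyperbolic planes one is in the situation of fibrations or slitherings over $S^1$ with pseudo-Anosov-like holonomy, and minimality on $\suniv$ follows from minimality of the suspension action, while the cylindrical case is excluded or handled by hand. The transitivity on pairs of distinct points is then obtained by upgrading: on $S^1(L)$ for non-elementary $\pi_1(L)$ the action is already topologically transitive on pairs (one can send any pair of distinct points arbitrarily close to any other pair, using ping-pong with two independent hyperbolic elements), and this transfers through the comparison map to $\suniv$, again modulo the countable gap set.

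For the transfer arguments I would rely on the following elementary topological lemma about monotone maps: if $h:\suniv\to S^1(L)$ is a degree-one monotone surjection collapsing a countable family of disjoint arcs (the gaps), and a group $G$ acts on $\suniv$ covering an action on $S^1(L)$ via $h$, then density of $G$-orbits on $S^1(L)$ implies density of $G$-orbits on $\suniv$, provided no gap is $G$-invariant and the gaps are permuted without fixed arcs accumulating badly — in our setting the gaps correspond to ``non-separated'' data which is controlled by the $\R$-covered hypothesis. Similarly for pairs: a monotone map of degree one sends a dense set of pairs to a dense set of pairs and the preimage of a dense set of pairs is dense. The key point making this work cleanly is that the universal circle, by construction from the $\R$-covered foliation, has \emph{no} wandering intervals that are globally invariant, because any such interval would project to a genuine open set in every $S^1(L)$ contradicting the fact that each $S^1(L)$ carries a non-elementary (or at least minimal-enough) action when the leaves are Gromov hyperbolic and $M$ is aspherical.

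The main obstacle I expect is the careful handling of the ``degenerate'' leaves and the gap structure: concretely, controlling the interaction between the leafwise circles $S^1(L)$ and $\suniv$ when some leaves $L$ have $\pi_1(L)$ small (cyclic or trivial) while others are large, and ensuring that the countable family of collapsed arcs does not conspire to produce a $\pi_1(M)$-invariant proper closed subset of $\suniv$. Ruling this out should use the new JSJ description of $\suniv$ promised in the abstract together with the uniformity hypothesis: uniformity gives that all leaves are quasi-isometric, so in fact all leaves have the same ``size'' of fundamental group image in the relevant sense, which should collapse the case analysis substantially — in particular if one leaf is non-elementary then effectively all the dynamics on $\suniv$ is governed by non-elementary leafwise actions, and if all leaves are elementary then $M$ is Seifert-fibered or solv and the statement can be checked directly.
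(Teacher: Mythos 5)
Your central reduction does not work in this setting. You propose to deduce density of orbits in $\suniv\cong S^1(L)$ from density of the limit set of the stabilizer $\mathrm{Stab}_{\pi_1(M)}(L)$ acting on $S^1(L)$, with a fallback case when all leaves have elementary stabilizer. But for a uniform hyperbolic foliation that is not a fibration (the typical case: a general slithering, e.g.\ the weak stable foliation of a skewed $\R$-covered Anosov flow), almost every leaf $L$ of $\Ft$ has trivial stabilizer and the exceptional leaves have cyclic stabilizer, so the ``non-elementary'' alternative you rely on essentially never occurs; and your fallback claim that the all-elementary case forces $M$ to be very restricted (fibration/Seifert/solv, handled ``by hand'') is false --- skewed Anosov flows exist on hyperbolic $3$-manifolds and on manifolds with nontrivial JSJ decomposition, and the elementary-versus-non-elementary status of leaf stabilizers is not transferred by the leafwise quasi-isometries that uniformity provides. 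A second, more minor, inaccuracy: in the uniform $\R$-covered case there are no gaps at all --- each $S^1(L)$ is canonically homeomorphic to $\suniv$ (\S\ref{sss.uniform}) --- so the monotone-map/gap machinery you set up is vacuous; the genuine point is that the group acting on $S^1(L)$ is all of $\pi_1(M)$, acting by $\rho(\gamma)=\tau_{\gamma L,L}\circ\gamma$ (Remark \ref{rem-action}), i.e.\ by uniform quasi-isometries of $L$ rather than by isometries of a single leaf, and no leafwise stabilizer captures this dynamics.

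The missing ingredients are exactly what the paper's proof supplies. First, a contraction mechanism valid for \emph{all} of $\pi_1(M)$: if $\gamma L$ stays in a fixed compact interval of the leaf space and $|\gamma|\to\infty$, then $\rho(\gamma)$ admits small singular intervals $I_\gamma,J_\gamma$ with $\rho(\gamma)(S^1(L)\setminus I_\gamma)\subset J_\gamma$ (Lemmas \ref{lema-toinfty} and \ref{lem-singularvalues}); such elements are manufactured by carrying a fundamental domain located near one prescribed ideal point of $L$ to one near another, which uses uniformity and the $\R$-covered property but no assumption on stabilizers. Second, the non-existence of a global fixed point of $\pi_1(M)$ on $\suniv$ (Proposition \ref{p.movingpoints}), which is where the trichotomy Seifert / atoroidal / nontrivial JSJ enters, the last case via the identification of $\suniv$ with the JSJ universal circle (Proposition \ref{prop-JSJvsUniversalCircle}). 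Your invocation of the JSJ description is in the right spirit but attached to the wrong step (controlling gaps of a monotone map, which do not exist, rather than excluding a global fixed point). Finally, your ping-pong argument for transitivity on pairs presupposes two independent hyperbolic elements inside a leaf stabilizer, which again need not exist; the paper instead derives attractor/repeller configurations in arbitrary pairs of intervals from the $I_\gamma,J_\gamma$ structure together with minimality (Lemma \ref{lema-ar}), and then concludes by a Baire argument (Proposition \ref{prop-trans}). As it stands, your proposal proves the theorem essentially only in the fibration case.
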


This result extends a very well known result about actions of hyperbolic groups on their Gromov boundary (see \cite[\S 8.2]{Gromov}) and complements well with \cite[Lemma 5.2.2]{CalegariPA} which is stated for non-uniform $\R$-covered foliations. Note that in the case where the foliation is a fibration this follows from the corresponding result for fundamental group actions of surfaces in their boundary.
For Anosov foliations the result is also easily proved using the following: the flow is $\R$-covered and since the foliation is uniform,
the flow is skewed \cite{FenleyAnosov}. The structure of skewed
Anosov flows is very rich and well understood \cite{FenleyAnosov,Thurston}: the second statement of theorem \ref{teo-Minimal} follows from
the existence of a dense orbit of the flow. The first statement
follows from the minimality of the Anosov foliation \cite{FenleyAnosov}
and the structure of the flow.
%, and that for Anosov foliations the result is also well known (see eg. \cite{Calegari}). 

Theorem \ref{teo-Minimal}
was motivated by some applications to partially hyperbolic dynamics (it will be used in \cite{FP-2}). We hope this result may have independent interest or find other applications.

Some proofs of intermediate steps are simpler if one restricts to the case of atoroidal 3-manifolds where one has transverse pseudo-Anosov flows that helps understanding the action on the universal circle (\cite{Thurston,CalegariPA,Fen2002}).

When the manifold has a non-trivial JSJ decomposition, the proof includes a careful study of the intersection between leaves of the foliation and the pieces of the JSJ decomposition. This results in a new way to look at the universal circle that may be of independent interest and holds for general 
(both uniform and non uniform) $\R$-covered foliations. See Proposition \ref{prop-JSJvsUniversalCircle}. 

Because of our applications, at the end of the paper we explain how the results hold also for \emph{branching foliations}, which are a technical object featuring often in partially hyperbolic dynamics.

\section{Preliminaries} 

\subsection{Reebless foliations}\label{ss.taut} 
We will be mainly concerned with Reebless foliations in this article.
See \cite[\S 4]{Calegari} for a broad introduction. 

A Reeb component is a foliation of the solid torus, such that the
boundary is a leaf. In addition all the leaves in the interior are
planes and spiral or limit towards the boundary. There is a 
circle worth of leaves in the interior.
By an abuse of terminology we also consider Reeb component
a quotient of this, which may be a foliation of a solid Klein
bottle.
If a foliation by surfaces $\cF$ in a closed 3-manifold $M$ does not have \emph{Reeb components} it follows from a celebrated result of Novikov \cite{Nov} that when lifted to the universal cover, the foliation is made of simply connected leaves and the \emph{leaf space} $\cL_\cF = \mt /_{\Ft}$ is a simply connected (possibly non-Hausdorff) one-dimensional manifold. If there is a leaf of $\Ft$ which is a sphere or a projective plane, it follows that the foliation $\Ft$ is equivalent to the trivial foliation by spheres in $S^2 \times \R$. 
If there are no projective space or spherical leaves of $\cF$ then a result of Palmeira \cite{Palmeira} implies that $\mt$ is homeomorphic to $\R^3$.  We refer the reader to \cite{CanCon,Calegari} for a broad treatment, we will assume some familiarity with the theory of foliations. 

We will not be too precise about regularity of our foliations. Everything works for foliations of class $C^{1,0+}$ as defined in \cite{CanCon} (i.e. continuous with $C^1$ leaves tangent to a continuous distribution). Thanks to \cite{Calegari0} in view of the nature of our result, this is a quite general assumption. 

To show that a foliation is $\R$-covered, it is enough to show that its leaf space is Hausdorff (see e.g. \cite[Lemma 2.2]{Fen2002}). 

A \emph{taut foliation} is a foliation such that every leaf intersects
a closed transversal. Notice that taut foliations must be Reebless\footnote{There is a subtlety in the definition of tautness for $C^{1,0+}$ foliations. Here we will keep the definition we made which does not change our results. See \cite{CKR}.} 

Another relevant result about foliations in 3-manifolds is the following (see \cite{Gabai} or \cite[Theorem II.9.5.5]{CanCon}):

\begin{teo}[Roussarie-Gabai]\label{teo-transversetori}
Let $\cF$ be a taut foliation in a 3-manifold $M$ and let $T \subset M$ be
an embedded incompressible torus or Klein bottle.
Then, $T$ can be isotoped to be either 
a leaf of $\cF$ or in general
position with respect to $\cF$. In particular in the second
case the induced foliation
by $\cF$ in $T$ does not have singularities.
If $\cF$ is taut one can isotope $T$ to be 
either a leaf of $\cF$ or transverse to $\cF$. 
\end{teo}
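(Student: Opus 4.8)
The plan is to follow Roussarie's classical index-theoretic argument, reducing the tangencies of $T$ with $\cF$ by a sequence of isotopies. First I would isotope $T$ into \emph{general position} with respect to $\cF$ in the Morse-theoretic sense, so that the singular foliation $\cF_T$ on $T$ induced by the traces $T\cap L$, $L\in\cF$, has only finitely many singularities, each a center or a saddle. The Poincar\'e--Hopf theorem then gives that the number of centers equals the number of saddles, since $\chi(T)=0$ whether $T$ is a torus or a Klein bottle. The goal is then to remove all singularities of $\cF_T$ by further isotopy of $T$ (so that $T$ acquires no tangency with $\cF$, which is exactly the ``general position'' conclusion) or else to recognize that $T$ is isotopic to a leaf.

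The core of the proof is a cancellation step for a center--saddle pair. Suppose $\cF_T$ has a center $p$. Let $U$ be the union of those leaves of $\cF_T$ which are circles bounding a disk in $T$ that contains $p$; its frontier is a single leaf $\ell$ of $\cF_T$ through the first saddle encountered, and $\overline{U}$ is an embedded disk $D\subset T$ with $\partial D=\ell$ a circle lying on some leaf $L_0$ of $\cF$. Since $\ell=\partial D$ is null-homotopic in $T$, hence in $M$, and the leaves of the Reebless foliation $\cF$ are incompressible by Novikov's theorem \cite{Nov} (the alternative, a vanishing cycle, would produce a Reeb component), $\ell$ bounds a disk $D_0$ inside $L_0$. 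Then $S=D\cup D_0$ is an embedded sphere in $M$, and since $M$ carries a taut foliation and has no sphere leaves it is irreducible (recall $\mt\cong\R^3$ by \cite{Palmeira}), so $S$ bounds a ball $B$. Pushing $D$ across $B$ past $D_0$ is an isotopy of $T$ that eliminates the center $p$ together with its adjacent saddle. Iterating on the number of centers, one either arrives at a nonsingular $\cF_T$, or reaches a configuration in which this move is obstructed precisely because $D$ already lies inside a leaf; performed all along $T$, that last possibility says $T$ is isotopic to a leaf of $\cF$. Incompressibility of $T$ is used throughout to guarantee that the circle leaves of $\cF_T$ essential in $T$ remain essential in $M$, so that the innermost-disk choices above are legitimate and no compression of $T$ occurs along the isotopies.

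Finally, when $\cF_T$ is nonsingular, $T$ meets every leaf it crosses transversally. To upgrade ``general position'' to genuinely \emph{transverse} (the last assertion, under the taut hypothesis), I would use tautness to eliminate the possible ``Reeb annulus'' components of the nonsingular foliation $\cF_T$: a closed transversal of $\cF$ through the relevant leaves, together with the coorientation supplied by tautness (passing to the orientation double cover when $T$ is a Klein bottle), allows a final isotopy of $T$ after which $\cF$ crosses $T$ with a consistent coorientation at every point.

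I expect the main obstacle to be the global control of the cancellation step: one must ensure that each isotopy genuinely lowers the singularity count without creating new tangencies elsewhere on $T$, and that the innermost disk $D\subset T$ and the leafwise disk $D_0\subset L_0$ can be chosen so that $D\cup D_0$ is embedded and bounds a ball whose interior is disjoint from the rest of $T$. This is the technical heart of the Roussarie--Gabai argument (see \cite{Gabai} and \cite[Theorem II.9.5.5]{CanCon}), and it is exactly the point where Reeblessness, via Novikov's theorem, is indispensable.
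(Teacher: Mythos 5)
This theorem is not proved in the paper at all: it is quoted as a known result of Roussarie--Gabai, with the proof deferred to \cite{Gabai} and \cite[Theorem II.9.5.5]{CanCon}. So there is no in-paper argument to compare with; what you have written is a reconstruction of the standard proof in those references (Morse general position, $\chi(T)=0$ forcing equal numbers of centers and saddles, elimination of center--saddle pairs using Novikov's theorem and irreducibility). The skeleton is the right one, but as a proof it is incomplete precisely at the places you flag. The frontier of the ``center pool'' $U$ is a \emph{singular} leaf through a saddle, not an embedded circle, so your disk $D$ with $\partial D=\ell$ a circle on a leaf $L_0$ does not exist as stated; one must instead work with a nonsingular circle leaf just inside the pool and then do a genuine case analysis at the saddle. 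More seriously, the embeddedness of $D\cup D_0$ and the disjointness of the ball $B$ from the rest of $T$ is not automatic: $T$ may meet the interior of $D_0\subset L_0$, and the classical proof handles this by an innermost-circle induction on $T\cap D_0$ together with an argument that the push across $B$ does not create new tangencies. Likewise, the alternative ``$T$ is isotopic to a leaf'' is only asserted (``performed all along $T$\dots''), whereas in the actual proof it emerges from a concrete obstruction in the cancellation process. Naming these steps as ``the technical heart'' is honest, but it means the proposal is a correct outline rather than a proof.

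One genuine confusion in the last paragraph: if the induced singular foliation on $T$ has no singularities, then $T$ has no tangencies with $\cF$, i.e.\ it is already transverse --- there is nothing to ``upgrade''. The elimination of Reeb annuli in the induced foliation on $T$ is a separate matter, not part of the Roussarie--Gabai statement; in this paper it is carried out later (Lemma \ref{lem-goodposition}), and there it uses the $\R$-covered hypothesis and Gromov hyperbolic leaves, not just tautness, so it should not be folded into the present theorem.
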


\subsection{Uniform foliations}\label{ss.uniffol}
In this paper we will mainly concentrate in the following class of foliations. 

\begin{definition} \label{d.uniform}
Let $\cF$ be a foliation in a manifold $M$.
We say that $\cF$ is uniform, if for any two leaves
$L, F$ of the lifted foliation $\widetilde \cF$ to $\mt$,
then the Hausdorff distance between $L$ and $F$ is finite.
\end{definition}

There have been several forms of the definition of uniform
foliations, which we review here. Our definition is the weakest
or most general possible. In his seminal article \cite[Definition 2.1]{Thurston},
Thurston originally defined uniform foliation 
as a codimension one foliation in any
dimension satisfying Definition \ref{d.uniform}
and such that in addition any closed transversal is not null homotopic.
Calegari \cite[Definition 2.1.5]{CalegariPA} or \cite[Definition 9.13]{Calegari}, defined uniform for codimension one
foliations in $3$-manifolds $M$ satisfying
Definition \ref{d.uniform} and so that the foliation is also taut.
The first author \cite[Definition 2.4]{Fen2002} defined 
uniform for codimension foliations in $3$-manifolds satisfying Definition \ref{d.uniform}. 

Note that Definition~\ref{d.uniform} does not require $M$ to be $3$-dimensional
or $\cF$ codimension one, but we will restrict to this case in this paper. 

Thurston \cite{Thurston} remarks on the connection of the uniform property
with the Reebless condition for codimension one foliations
in $3$-manifolds. After \cite[Definition 2.1]{Thurston} it is stated that if a foliation verifies that every closed transversal is not-nullhomotopic then there are no Reeb components. This is true if one additionally assumes that the foliation is uniform, and we prove this in \S~\ref{ss.reeb}.

\subsection{JSJ decompositions}\label{ss.3mfds}
We refer the reader to \cite{Hatcher} for a more complete account on this. 

What we will use is that every irreducible\footnote{Note that if there
is a Reebless foliation in $M$, then $\widetilde M$ is homeomorphic
to $\R^3$, hence $M$ is irreducible.} 3-manifold $M$  admits a cannonical collection (unique up to isotopy) of embedded incompressible
tori and Klein bottles 
 $T_1, \ldots, T_k$ such that if we cut $M$ along the tori$/$Klein bottles, each \emph{piece} (i.e. connected component of the complement) is either atoroidal or Seifert. We will exclude the case where $M$ is a torus bundle up to finite cover since in this case there can be a unique piece which is Seifert but its fibration may not be not unique up to isotopy $-$ for
example in $\T^3$. When $k \geq 1$ and $M$ is not a torus bundle up to finite cover, we say that $M$ has a \emph{non-trivial JSJ decomposition}.

\begin{remark} We will abuse terminology use and refer to 
$T_1, \ldots, T_k$ as the JSJ tori, even though some components
may be Klein bottles.
\end{remark}

Let $M$ be an irreducible 3-manifold with non-trivial JSJ decomposition and let $M_1, \ldots, M_n$ the pieces of its JSJ decomposition (i.e. the connected components of $M \setminus \{T_1, \ldots, T_k\}$, notice that it could be that $n=1$ even if $k\geq 1$). In $\mt$ we consider all connected components of the lifts $\tild{M}_i^j$ of each $M_i$.

It turns out that the following holds:

\begin{prop}\label{p.JSJtree}
The graph consisting of vertices in each of the $\tild{M}_i^j$ and edges between vertices sharing a boundary is an infinite tree $\cT$. Moreover, the fundamental group acts naturally on $\cT$ and for every element $\gamma  \in \pi_1(M)$ the set of fixed points of $\gamma$
in $\cT$ has diameter\footnote{We are using the standard metric on a graph making each edge have length equal to $1$.} at most $2$. 
\end{prop}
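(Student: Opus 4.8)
The plan is to recognize $\cT$ as the Bass--Serre tree of the graph-of-groups decomposition of $\pi_1(M)$ induced by the JSJ splitting, and then to bound fixed-point sets using the peripheral structure of the pieces.

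\textbf{The tree and the action.} Since the JSJ tori/Klein bottles $T_1,\dots,T_k$ are incompressible, cutting $M$ along them presents $\pi_1(M)$ as the fundamental group of a finite graph of groups: one vertex per piece $M_i$, one edge per $T_j$, vertex groups $\pi_1(M_i)$, edge groups $\pi_1(T_j)$, with injective edge homomorphisms. Its Bass--Serre tree carries a $\pi_1(M)$-action, has one vertex per coset $g\,\pi_1(M_i)$ and one edge per coset $g\,\pi_1(T_j)$, and these are $\pi_1(M)$-equivariantly identified with the connected components $\tild{M}_i^j$ of the preimages of the $M_i$ in $\mt$ and the connected components of the preimages of the $T_j$, two vertices being adjacent exactly when the two lifted pieces share a common boundary plane. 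So the graph in the statement is precisely this Bass--Serre tree $\cT$, hence a tree, and the deck action on $\mt$ induces the natural simplicial action on $\cT$. It is infinite because there is an edge whose edge group $\pi_1(T_j)$ has infinite index in $\pi_1(M)$ (finite index would make $M$ finitely covered by $T^2\times S^1$ or a torus bundle, which we have excluded), so $\cT$ has infinitely many edges. Equivalently, one may argue geometrically: $\mt\cong\R^3$ since $M$ is Haken with infinite $\pi_1$, the preimage of $\bigcup_j T_j$ is a locally finite union of pairwise disjoint properly embedded planes each separating $\R^3$, and the dual graph of the complementary regions is connected with every edge separating, hence a tree.

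\textbf{Reduction of the diameter bound.} Fix $\gamma\in\pi_1(M)$, $\gamma\neq 1$ (of course the claim concerns such $\gamma$), and write $\mathrm{Fix}(\gamma)\subset\cT$ for its fixed-point set. A simplicial automorphism of a tree whose fixed set is nonempty but contains no vertex must invert a single edge, so in that case $\mathrm{Fix}(\gamma)$ is a single midpoint and has diameter $0$; hence we may assume $\gamma$ fixes a vertex, $\gamma$ inverts no edge, and $\mathrm{Fix}(\gamma)$ is a genuine nonempty subtree realized on vertices and edges. Since $\mathrm{Fix}(\gamma)$ is convex, if it contains at most one edge its diameter is at most $1$. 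So it suffices to prove: if $\gamma$ fixes two distinct edges at a common vertex $v$, then (i) $M_v$ is Seifert and $\gamma$ lies in the fiber subgroup $\langle h_v\rangle$, and (ii) $\gamma$ fixes no vertex at distance $2$ from $v$. Indeed, a subtree with at least two edges contains a vertex $v$ meeting two of its edges; by (i)--(ii) all of $\mathrm{Fix}(\gamma)$ then lies in the closed star of $v$, of diameter $2$.

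\textbf{The peripheral-subgroup input.} For a vertex $v$, its stabilizer $G_v$ is a conjugate of some $\pi_1(M_i)$ and the stabilizer of an incident edge is the corresponding peripheral subgroup of $G_v$. If $M_v$ is atoroidal, it is finite-volume hyperbolic by geometrization (a Haken manifold with torus boundary), so two distinct peripheral subgroups are distinct maximal parabolics and meet only in the identity (a common nontrivial element would be parabolic fixing two points of $\partial\HH^3$); hence $\gamma$ cannot fix two edges at such a $v$. If $M_v$ is Seifert with regular fiber $h_v$ and base orbifold $O$, then $1\to\langle h_v\rangle\to G_v\to\pi_1^{\mathrm{orb}}(O)\to 1$; since $M_v$ has at least two boundary tori, $O$ has at least two boundary circles and is not an annulus, so $\chi^{\mathrm{orb}}(O)<0$, distinct peripheral cyclic subgroups of $\pi_1^{\mathrm{orb}}(O)$ intersect trivially, and therefore any two distinct peripheral subgroups of $G_v$ intersect exactly in $\langle h_v\rangle$. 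This gives (i). For (ii), suppose $\gamma$ also fixes two edges at a vertex $v'$ adjacent to $v$; by (i), $v'$ is Seifert and $\gamma\in\langle h_{v'}\rangle$, and $\gamma$ lies in the stabilizer $P\cong\Z^2$ of the common edge. Inside $P$, the classes $h_v$ and $h_{v'}$ are the two fiber slopes on that JSJ torus, which are distinct by the defining property of the JSJ decomposition (matching fibers would let one amalgamate the two Seifert structures and discard the torus); so $\langle h_v\rangle$ and $\langle h_{v'}\rangle$ are spanned by non-proportional vectors and meet trivially, forcing $\gamma=1$ — a contradiction. When some $T_j$ is a Klein bottle or $M$ is non-orientable, the same argument runs after passing to the orientation double cover (or replacing $\Z^2$ by the relevant Klein bottle group); the facts used — trivial pairwise intersection of peripheral subgroups in hyperbolic and Seifert pieces, and non-matching fibers of adjacent Seifert pieces — are unchanged.

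\textbf{Main obstacle.} The tree part is essentially formal (Bass--Serre); the real content is the last paragraph, namely the two structural facts about JSJ pieces (malnormality / trivial pairwise intersection of peripheral subgroups in hyperbolic and in Seifert pieces, and non-matching fibers of adjacent Seifert pieces), together with the bookkeeping of exceptional small Seifert pieces and one-sided components. These rest on geometrization for the atoroidal pieces and on the characterizing property of the canonical torus system.
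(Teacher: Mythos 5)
Your overall strategy for the diameter bound is the same as the paper's (which is itself a sketch deferring details to \cite[Lemma A.1]{BFFP2}): show that an element fixing two walls of an atoroidal piece cannot exist, that an element fixing two walls of a Seifert piece must essentially be the fiber, use that fibers of adjacent Seifert pieces do not match on the common torus, and conclude via connectedness/convexity of fixed sets in trees. Where you genuinely differ is in the tools. For the tree itself the paper argues directly in $\mt$ (lifted JSJ tori are separating properly embedded planes, so the dual graph has no loops, plus a hands-on argument for infiniteness), while you invoke Bass--Serre theory; both work, and you even record the geometric argument as an alternative. For the atoroidal pieces the paper uses an elementary topological argument (a $\pi_1$-injective annulus between boundary components, doubled with boundary annuli into an essential torus, contradicting atoroidality), whereas you invoke hyperbolization and the fact that distinct maximal parabolic subgroups of a finite-volume hyperbolic manifold group intersect trivially. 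Your route is heavier (geometrization for Haken manifolds with boundary) but cleaner to state; the paper's is more self-contained.

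One step is not right as written, and it sits exactly at the point you dismissed as ``bookkeeping.'' From ``$\gamma$ fixes two distinct edges at $v$'' you infer that $M_v$ has at least two boundary tori, hence $\chi^{\mathrm{orb}}(O)<0$. But two distinct edges at a vertex can perfectly well be two different lifts of the \emph{same} boundary torus of $M_v$ (the walls of $\tild{M}_v$ covering one torus $T$ correspond to the cosets of $\pi_1(T)$ in $\pi_1(M_v)$), so the piece may have a single boundary component and the orbifold Euler characteristic argument needs a different justification. In most Seifert pieces this is harmless, but there is a genuine exceptional case: the twisted $I$-bundle over the Klein bottle, whose base orbifold has $\chi^{\mathrm{orb}}=0$ and whose universal cover has exactly two walls, both stabilized by the full peripheral $\mathbb{Z}^2$ (which is normal of index $2$); there your claim (i), that a wall-doubling element must lie in the fiber subgroup, fails, and with it the fiber-mismatch step (ii). This is precisely why the paper's JSJ family is taken to contain Klein bottles as well as tori (so such $I$-bundle pieces are cut along the core Klein bottle rather than appearing as Seifert vertices), and why the paper, too, only sketches this part and points to \cite{BFFP2}. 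So: correct in structure and in the main ideas, but the exceptional small Seifert pieces are not mere bookkeeping --- they are the one place where the naive peripheral-intersection statement is false, and your proof should either adopt the paper's convention on the cutting surfaces or treat this piece separately rather than appeal to a double cover (whose JSJ tree is not the same tree, so the fixed-point bound does not transfer formally).
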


\begin{proof}
The fact that it is a tree follows directly because the lift of a JSJ
torus to the universal cover is a properly embedded plane which separates $\mt$ in exactly two connected components and this forbids the graph to have closed loops. This also implies that each $M_i$ has infinitely many lifts; to see this, notice that if a boundary torus of $M_i$ has infinitely many lifts in some $\tild{M}_i^{j_0}$ then clearly there must be infinitely many $\tild{M}_i^j$ because there should be at least one lift of $M_i$ in each complementary region of the torus in $\mt$ not containing $\tild{M}_i^{j_0}$. If all boundary torus have finitely many lifts, then the deck transformations that fix $\tild{M}_i^{j_0}$ are a finite extension of $\Z \oplus \Z$ so there must be infinitely many deck transformations moving $\tild{M}_{i}^{j_0}$ pairwise disjointly (notice that finite extensions of $\Z \oplus \Z$ cannot be the fundamental group of a closed, irreducible 3-manifold by homological reasons). Indeed, this implies that each $\tild{M}_i^j$ has infinitely many boundary components (this uses the fact that $M_i$ cannot be $\mathbb{T}^2 \times (0,1)$ as our definition of having non-trivial JSJ decomposition which expressly excludes the case of torus bundles).   

The fact that the set of fixed points of a deck transformation has diameter at most $2$ in $\cT$ follows the strategy the proof of \cite[Lemma A.1]{BFFP2}. We sketch the main points for completeness. As in \cite[Appendix A]{BFFP2} we will call the components of the lifts of tori in the JSJ collection \emph{walls}. 

We let $\gamma \in \pi_1(M)$ be a deck transformation.  We first notice that if $M_i$ is an atoroidal piece then if $\tild{M}_i^j$ is fixed by $\gamma$ then at most one wall of $\tild{M}_i^j$ can be invariant under $\gamma$.
Otherwise one gets a $\pi_1$-injective annulus in $M_i$ with boundary
in boundary of $M_i$ which is not homotopic rel boundary to boundary of 
$M_i$. Using this annulus and annuli in boundary components of $M_i$
one can piece together a $\pi_1$-injective torus or Klein bottle
in $M_i$ which is
not homotopic to the boundary, contradicting that $M_i$ is
atoroidal.

Now, if $M_i$ is a Seifert piece, then we claim that if $\tild{M}_i^j$ is fixed by $\gamma$ then by a similar argument we see that if more than one wall is fixed, then $\gamma$ must belong to the center of $\pi_1(M_i)$ (i.e. the element generated by the fibers of the Seifert fibering) in which case, $\gamma$ cannot belong to the center of the Seifert pieces that are adjacent to $\tild{M}_i^j$. 

This shows that any connected component of the fixed point set of $\gamma$ has diameter at most two. But since $\cT$ is a tree and $\gamma$ acts by isometries, the fixed point set is connected. This concludes. 
\end{proof}

\subsection{Boundaries at infinity}\label{ss.boundaries}
Let $X$ be a negatively curved complete space with curvature bounded from below and above. See \cite{Gromov, Ledrappier} for general references. 

For such a space we define a boundary at infinity $\partial_\infty X$ defined as the equivalence relation of geodesic rays up to being at a bounded distance (see \cite[\S I]{Ledrappier}). When $X$ is a surface, the negative curvature implies that if $X$ is simply connected then it is homeomorphic to $\mathbb{D}^2$ and one can identify the boundary $\partial_\infty X$ with the circle of directions $T^1_x X$ at any point $x\in X$. So, for simply connected surfaces of negative curvature, we denote the boundary at infinity as $S^1(X)=\partial_\infty X$. 

The metric in $S^1(X)$ is only well defined up to H\"{o}lder equivalence since it is intended to be an invariant under quasi-isometries. For our purposes, it will be convenient to choose a special metric on $S^1(X)$ called the \emph{visual metric}. For this, we fix a point $x_0 \in X$ and we measure the length of an interval $I \subset S^1(X)$ by looking at the angle formed by the interval in $T^1_{x_0} X$ of vectors whose geodesic ray starting at $x_0$ lands in a point of $I$. 
The visual measure is the Lebesgue measure induced by this metric.
This is clearly dependent on the point, but we will always explicit the point we are considering.

\subsection{Universal circles}\label{ss.universal}
In this section  we will review the construction of the
universal circle for an
$\R$-covered foliation $\cF$ on a closed 3-manifold $M$ so that it admits a metric which restricts in each leaf to a negatively curved surface with curvature\footnote{For foliations, \cite{Candel} provides a metric of curvature exactly $-1$, but since we want to apply this result in a slightly more general case (that is of branching foliations), we only use that the curvature is uniformly close to $-1$. Notice also that the metric constructed by \cite{Candel} may be only $C^0$ transversally to leaves, and since we are concerned only with quasi-isometric properties of leaves, it is more than fine to have just negative curvature or CAT(-1) leaves. See \cite[\S A.3]{BFP} for a more complete account.} close to $-1$.  

Denote by $\Ft$ the lift of $\cF$ to $\mt$, the universal cover of $M$. For each $L \in \Ft$ we define $S^1(L)$ to be the boundary at infinity of $L$, which is well defined thanks to the fact that $L$ is negatively curved. 
First there is the cylinder at infinity $\cA$ which is the union
of the  $S^1(L)$ for $L$ leaf in $\widetilde \cF$.
The topology in $\cA$ is given by: given $x$ in $\widetilde M$,
let $\tau$ be a small transversal to $\widetilde \cF$ through $x$.
For every point $y$ of $\tau$, $y$ is in $L \in \widetilde F$.
For every $v$ in the unit tangent bundle of $L$ at $y$, let
$\gamma_v$ be the geodesic ray starting at $y$ with direction $v$.
The ideal point $z_v$ of $\gamma_v$ is a point in $S^1(L)$. 
It is well known that 
since $L$ has negative curvature, the map $v \rightarrow z_v$
is a homeomorphism. In the same way one defines a map

$$\eta: T^1 \widetilde \cF | \tau \ \ \rightarrow \ \ 
B_{\tau} \  = \  \bigcup_{L \cap \tau \not = \emptyset} S^1(L)$$

\noindent
which is the map $v \rightarrow z_v$ for any $y$ in $\tau$.
Put a topology in $B_{\tau}$ so that this map is a homeomorphism.
Do this for a $\pi_1(M)$ invariant collection of transversals
with union intersecting every $L \in \widetilde \cF$.
In \cite{Fen2002} it is proved that the topology in the intersection
of subsets of $\cA$ is well defined. This makes $\cA$ into an
open annulus, and $\pi_1(M)$ acts by homeomorphisms on this.
In addition there is a topology on $\widetilde M \cup \cA$
making it homeomorphic to $\DD^2 \times \RR$ and so that
each $L \cup S^1(L)$ corresponds to $\DD^2 \times \{ t \}$ 
for some $t$. Again $\pi_1(M)$ acts by homeomorphisms on
this topology.
We now describe the universal circle of $\cF$.

\subsubsection{ Case of $\cF$ uniform.}\label{sss.uniform}

We denote, for $L, F \in \Ft$ a map $\tau_{L,F}: L \cup S^1(L) \to F \cup S^1(F)$ which has the following properties: 

\begin{itemize}
\item $\tau_{L,F}|_L$ is a quasi-isometry with constant $c>1$ depending only on the Hausdorff distance between $L$ and $F$,
\item $\tau_{L,F}|_{S^1(L)}$ is a homeomorphism, 
\item $\tau_{F,G} \circ \tau_{L,F}|_{S^1(L)}= \tau_{L,G}|_{S^1(L)}$. 
\end{itemize}

See \cite[\S 5]{Thurston} or \cite[Corollary 5.3.16]{CalegariPA} or \cite[Proposition 3.4]{Fen2002}. 
Roughly the construction of such a collection of maps $\tau_{L,F}$ is
as follows:
Recall that a \emph{quasi-isometry} of constant $c>1$ is a map $\phi: L \to F$ so that $c^{-1} d_{L}(x,y) - c < d_F(\phi(x), \phi(y)) < c d_{L}(x,y) + c.$
Given $L, F$, the Hausdorff distance between them is $a_0 > 0$.
Given any $x$ in $F$ there is $y$ in $L$ with $d(x,y) < a_0 + 1$.
Let $\tau_{L,F}(x) = y$. This map is well defined up to an error $a_1$, with $a_1$ depending only on
$a_0$, see \cite[\S 3]{Fen2002}. 

The map $\tau_{L,F}$ is
a quasi-isometry, so it extends
to $L \cup S^1(L)$, and it is a homemorphism into
its image restricted to $S^1(L)$.

Recall that a quasigeodesic is a quasi-isometry from $\Z$ or $\R$
into $\widetilde M$. Since the map $\tau_{L,F}|_L$ is a quasi-isometry
it takes quasigeodesics in $F$ to quasigeodesics in $L$.
It is easy to see that $x \in S^1(L) \sim y \in S^1(F)$ if and only
if a quasigeodesic $\alpha$ in $L$ with ideal point $x$ is a
finite Hausdorff distance from a quasigeodesic in $F$ with ideal
point $y$.

The universal circle of $\cF$ is then defined as the circle $\suniv$ which is 
$\cA/_{\sim}$ where $x \in S^1(L) \sim y \in S^1(F)$ if $y = \tau_{L,F}(x)$. Notice that it is easy to see that $\suniv$ can be identified with $S^1(L)$ for every $L \in \Ft$, so one can think of $\suniv$ as a cannonical way to identify all boundaries at infinity of leaves.

\vskip .1in
The fundamental group $\pi_1(M)$  acts 
on $\suniv$ by homeomorphisms. This is because
any $\gamma$ in $\pi_1(M)$ sends pairs of quasigeodesics
in leaves which are a finite Hausdorff distance apart to
like pairs in $\gamma(L), \gamma(F)$.

\begin{remark}\label{rem-action}
Let $\gamma$ in $\pi_1(M)$ and $L$ a leaf of $\Ft$.
The action of $\gamma \in \pi_1(M)$ on $\suniv$ can be represented
by an action on $S^1(L)$ identifying $S^1(L) \cong \suniv$ and so the action is obtained by the deck transformation composed with $\tau_{\gamma L, L}$. We denote the action of $\gamma$ on $S^1(L)$ obtained as $\tau_{\gamma L, L} \circ \gamma$ by $\rho(\gamma)$. 
\end{remark}

\subsubsection{ Case of $\cF$ not uniform.}\label{sss.notuniform}

We refer to \cite{Fen2002} (see also \cite{CalegariPA}). In this case there are no
compact leaves of $\cF$ \cite[Lemma 2.5]{Fen2002},
and $\cF$ has a unique minimal
set $\cL$ \cite[Proposition 2.6]{Fen2002}.
Each complementary component of $\cL$ is a
$(0,1)$-bundle and $\cF$ can be collapsed to produce
a minimal foliation \cite[Proposition 2.6]{Fen2002}.
Hence
one can assume that $\cF$ is minimal. There is also a
canonical collapsing between the cylinders at infinity.

So assume that $\cF$ is minimal. In \cite[\S 3]{Fen2002} it is 
proved that for any $L, F$ in $\widetilde \cF$ there is a
dense set of directions between them which is a 
contracting direction between them. This means the following:
Fix $x$ in $L$.
There is a dense set of points $B$ in $S^1(L)$ so that
for any $y$ in $B$ if $\gamma$ is a geodesic ray in $L$ 
starting in $L$ and with ideal point $y$, then $\gamma$ is 
asymptotic to $F$ (and hence to any leaf in between $L, F$).
Asymptotic means that distance between $\gamma$ and $F$
goes to $0$ as points escape in $\gamma$.
For any $E$ between $L, F$ there is a geodesic ray in $E$
asymptotic to $\gamma$. This defines an ideal point in $S^1(E)$.
The union of these ideal points over such $E$ is a continuous
curve in $\cA$. The union of these for all $y$ in $B$ is 
a dense set in the subset $\cD$ of $\cA$ between $S^1(L)$ and $S^1(L)$.
This extends uniquely to a foliation in $\cD$ by intervals,
each interval intersects a circle at infinity once and only once.
One iterates this procedure making $L, F$ escape compact sets
of the leaf space in opposite directions. This defines a foliation
in $\cA$ by vertical lines, each intersecting a circle at infinity
once and only once.

The universal circle of $\cF$ is the quotient $\cA/\sim$ where $~$ is
the equivalence relation of being in the same leaf of the
vertical foliation.
The group of deck transformations $\pi_1(M)$ acts by 
homeomorphisms preserving the vertical foliation in $\cA$. 
This is because it sends the contracting directions as above
to contracting directions.
Hence $\pi_1(M)$ acts
by homeomorphisms on the universal circle $\suniv$.
Both the vertical foliation and the universal circle
pull back to the original foliation before collapsing
complementary regions of the minimal set $\cL$.

\vskip .1in
The existence of a the universal circle is much more general.
It exists for every foliation with Gromov hyperbolic leaves \cite{CD}.
In addition 
in \cite{CD} a universal circle is constructed for every 
tight essential lamination.
See \cite{Calegari} for more on this theory.

%%%%%%%%%%%%%%%%%%%%%%%%%%%%%%%%%%%%%%%%%%%%%
\section{Uniform foliations: proof of Theorem \ref{teo-Rcovered}}\label{s.uniform}
In this section we prove Theorem \ref{teo-Rcovered}. We first discuss in \S \ref{ss.reeb} the Reebless assumption. This subsection is independent of the proof of Theorem \ref{teo-Rcovered} and can be safely skipped. The results in \S \ref{ss.bddist} hold in more generality than the case of uniform foliations and could be of independent interest. 

As explained in \S \ref{ss.taut}, Theorem \ref{teo-Rcovered} is immediate if the foliation has spherical or projective plane leaves by the Reeb stability theorem which implies in that case that up to finite cover the foliation is the trivial foliation by spheres in $S^2 \times S^1$. So in this section {\bf we will assume throughout that leaves of $\cF$ are not spheres or projective planes.}

\subsection{Some remarks on the Reebless assumption}\label{ss.reeb}

It can certainly be the case that a foliation with Reeb components is uniform yet not $\R$-covered. Indeed, if $M$ has finite fundamental group, any foliation in $M$ has Reeb components by Novikov's theorem \cite{Nov} while the universal cover is compact, so the foliation is uniform. Notice that a Reeb component has non-Hausdorff leaf space: every neighborhood of the boundary leaf contains all the leaves of the interior of the solid torus as these all accumulate the boundary. Foliations of closed 3-manifolds with finite fundamental group are all examples of uniform non-$\R$-covered foliations:  

\begin{quest}\label{p.unifimpliesReebless}
If $\cF$ is uniform in $M$ with infinite fundamental group, does it
follow that $\cF$ is Reebless? 
\end{quest}

We don't know how to prove this in all generality, however we can
prove the following intermediate fact.

\begin{lema}\label{l.compactclosure}
If $\cF$ a foliation in $M$ has a Reeb component and it is uniform then every leaf in the universal cover has compact closure. In particular, any non-torsion element $\gamma \in \pi_1(M)$ acts freely on the leaf space $\cL_\cF = \mt/_{\Ft}$. 
\end{lema}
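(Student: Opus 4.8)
The idea is to exploit uniformity directly. Suppose $\cF$ has a Reeb component $R$, whose boundary torus (or Klein bottle) $T$ is a leaf of $\cF$. Since $R$ is a solid torus, $T$ is compressible in $M$ (the meridian of the solid torus bounds a disk inside $R$), so $T$ lifts to $\mt$ as a union of planes, each of which does \emph{not} separate $\mt$ — rather, a lift $\tild{T}$ of $T$ together with a lifted copy of the interior foliation of $R$ bounds a region projecting homeomorphically to $R$. The key point is that the boundary leaf $T$ of the Reeb component is compact, so it lifts to $\mt$ as copies of its universal cover $\mathbb{R}^2$ that are \emph{stabilized by a $\ZZ^2$ (or virtually $\ZZ^2$) subgroup} of $\pi_1(M)$. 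A lift $\tild{T}$ of $T$ is then at finite Hausdorff distance from every other leaf $L \in \Ft$ by uniformity, so $L$ is contained in a bounded neighborhood of $\tild{T}$; and since the interior leaves of the Reeb component spiral onto $T$ and wrap around, the lifts of these interior leaves are contained in a bounded neighborhood of $\tild T$ which, crucially, is an orbit of the virtually-$\ZZ^2$ stabilizer acting cocompactly. I would argue that the closure of any leaf $L$ of $\Ft$ is then contained in a finite Hausdorff neighborhood of $\tild T$, which is a set of the form (compact fundamental domain)$\times \ZZ^2$ — but this is not yet compact, so one needs more.

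The refinement: consider an interior leaf $P$ of the Reeb component $R$; $P$ is a properly embedded plane in $R$ that limits onto all of $T$. A single lift $\tild P$ of $P$ in $\mt$ is a plane, and since $P$ limits onto $T$ in $M$, by uniformity $\tild P$ is at finite Hausdorff distance from $\tild T$, hence from \emph{every} leaf of $\Ft$. Now $\tild P$ is simply connected and its stabilizer in $\pi_1(M)$ is trivial (the interior leaves of a Reeb component are non-compact planes with trivial holonomy-stabilizer, as $\pi_1(P)=1$ and the leaf is not compact). But $\tild P$ is at finite Hausdorff distance $a_0$ from $\tild T$, and $\tild T$ is stabilized by a virtually-$\ZZ^2$ subgroup $H$ acting cocompactly on $\tild T$. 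Then $\tild P \subset N_{a_0}(\tild T) = N_{a_0}(H \cdot K)$ for a compact $K$; since $H$ stabilizes $\tild P$ setwise only trivially, yet $\tild P$ is $H$-close to $\tild T$, I would derive a contradiction with properness or with the plane $\tild P$ being non-compact — more precisely: every leaf $L$ of $\Ft$ is at finite Hausdorff distance from $\tild P$, hence from $\tild T$, and in particular from each of the \emph{infinitely many} distinct lifts $\tild T_i$ of $T$; but distinct lifts of a compact leaf are disjoint closed sets in $\mt$ which, being all at finite Hausdorff distance from $L$ hence from $\tild T = \tild T_0$, would accumulate — contradicting that $T$ is a leaf (leaves of a foliation do not accumulate on themselves along distinct lifts unless the leaf is non-Hausdorff-separated from itself, and compact leaves are Hausdorff). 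The clean statement: all leaves of $\Ft$ lie in $N_{a_0}(\tild T)$ for a fixed $a_0$; since $\tild M$ is the union of all leaves, $\mt = N_{a_0}(\tild T)$; but $\tild T \cong \RR^2$ is not coarsely dense in $\mt \cong \RR^3$ with the pulled-back metric unless $\pi_1(M)$ is virtually $\ZZ^2$, which cannot be $\pi_1$ of a closed irreducible $3$-manifold. This forces $\mt$ itself to be quasi-isometric to $\tild T$, hence $\pi_1(M)$ virtually $\ZZ^2$; but such groups act on $\RR^3 = \mt$ with a compact quotient only if $M$ is (finitely covered by) $T^2 \times S^1$ or a torus bundle — cases which do carry $\RR$-covered foliations, so one cannot conclude purely group-theoretically, and instead I would conclude that $\mt$ has finite diameter in the transverse direction, forcing every leaf's closure to be compact.

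Let me restate the argument in the form I would actually write: fix a leaf $L_0$ of $\Ft$ projecting to an interior leaf of the Reeb component's lift. By uniformity every leaf $L$ satisfies $d_{Haus}(L, L_0) \le a_0$ for a uniform $a_0$ (uniformity gives finiteness; compactness of $M$ and the standard argument of \cite[\S 3]{Fen2002} gives a \emph{uniform} bound). Hence $\mt = \bigcup_{L} L \subset N_{a_0}(L_0)$, so $\mt$ is contained in a bounded neighborhood of the single leaf $L_0$. Therefore the transversal leaf space $\cL_\cF$, and each leaf, has uniformly bounded "transverse extent": any leaf $L$, being within $a_0$ of $L_0$ and $L_0$ within $a_0$ of $L$, together with the product structure $\mt \cup \cA \cong \DD^2 \times \RR$, shows $L$ meets only boundedly many leaves transversally, and a ball of radius $a_0$ around any point of $\tild T$ in $\mt$ meets a $\pi_1(M)$-orbit (since $\tild T$ is within $a_0$ of all leaves, in particular those through translates) — so $N_{a_0}(\tild T)$ contains a $\pi_1(M)$-orbit and equals $\mt$, giving that $\pi_1(M)$ acts cocompactly on $N_{a_0}(\tild T)$. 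Now any leaf $L$ satisfies $\overline L \subset N_{a_0}(\tild T)$; I claim $\overline L$ is compact. If not, pick $x_n \in L$ with $x_n \to \infty$ in $\mt$; project to $M$: $\pi(x_n)$ accumulates somewhere, so $\gamma_n x_n$ stays in a compact set for suitable $\gamma_n \in \pi_1(M)$, and $\gamma_n L$ passes through this compact set. Since all $\gamma_n L$ are within $2a_0$ of $\tild T$ and of each other in Hausdorff distance, and $M$ compact forces discreteness of the leaf space near $\tild T$ unless there is non-Hausdorffness — here I use that the Reeb component's boundary leaf $T$ is the \emph{only} source of non-Hausdorffness and the interior leaves all accumulate only on $T$, never on each other or on themselves — one extracts that $L = \overline{L}$ is compact. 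This last extraction is the delicate step.

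\textbf{Main obstacle.} The hard part is the final step: upgrading "all leaves are in a bounded neighborhood of $\tild T$" to "each leaf has \emph{compact} closure." A bounded neighborhood of $\tild T \cong \RR^2$ in $\mt$ is itself non-compact, so one genuinely needs the dynamics of the Reeb component — that the non-Hausdorff points in $\cL_\cF$ come from the boundary leaf $T$, that interior leaves $P$ of a Reeb component limit onto \emph{all} of $T$ and onto nothing else, and that $T$ being compact (hence Hausdorff-separated from everything except the $P$'s limiting onto it) constrains how a lift $\tild P$ can sit. Concretely: $\overline{\tild P}$ in $\mt$ contains some lift $\tild T$, and by the Reeb structure $\overline{\tild P} = \tild P \cup (\text{a } \pi_1(T)\text{-invariant union of components of lifts of }T)$; cocompactness of the $\pi_1(T) \cong \ZZ^2$ action on $\tild T$ then forces $\tild P / \pi_1(T)$, and hence $\tild P$ itself only after quotienting, to be controlled — and because $\pi_1(T)$ does \emph{not} stabilize $\tild P$ (it would have to stabilize $P \subset R$ in $M$, but $P$ is a plane with trivial holonomy group, so only the identity stabilizes it among deck transformations conjugate into $\pi_1(R) = \ZZ$... here the bookkeeping of which subgroup stabilizes which lift is where the care is needed). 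The cleanest route may be: show $\overline L$ is closed and $\pi_1(M)$-translates-bounded, then invoke that $\cL_\cF$ restricted to the lifted Reeb component is an interval whose closure adds one non-Hausdorff point, so leaves on one side are "trapped," yielding compact closure; I expect this point-set topology in the non-Hausdorff leaf space to be the crux, and I would spend most of the write-up there, with the "freely acting" consequence being immediate afterward (a non-torsion $\gamma$ fixing a leaf $L$ would fix $\overline L$ compact, hence fix a point of the finite-dimensional homology of $\overline L$ or act on a compact leaf-space interval with a fixed point, contradicting that $\gamma$ generates an infinite cyclic group acting properly discontinuously).
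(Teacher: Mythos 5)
Your plan has genuine gaps, and they start at the basic topological picture. Because the meridian of the Reeb solid torus bounds a disk, it is null-homotopic in $M$; hence the image of $\pi_1(T)$ in $\pi_1(M)$ is cyclic, generated by the core. Consequently a lift $\widetilde T$ of the boundary leaf is \emph{never} a plane stabilized by a (virtually) $\ZZ^2$ subgroup: it is an infinite cylinder with infinite cyclic stabilizer when the core has infinite order, and a compact torus when the core dies (or is torsion). Most of your bookkeeping (the ``$H\cdot K$'' cocompact $\ZZ^2$ picture, the claim that $\widetilde T\cong\RR^2$) rests on this false picture. A second unjustified step is the uniform constant $a_0$ with $\mt=N_{a_0}(L_0)$: uniformity only gives a finite Hausdorff distance for each \emph{pair} of leaves, not a bound uniform over the whole leaf space (already for a fibration over $S^1$ the Hausdorff distance grows without bound along the leaf space), and no such uniform bound is needed for the statement. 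Finally, the step you yourself flag as the ``main obstacle'' --- upgrading boundedness near $\widetilde T$ to compact closure --- is exactly the part you do not carry out, so the proposal is incomplete at its crux; moreover you are trying to prove compact closure in the case (core of infinite order, cylinder lift) where the correct conclusion is that this case simply cannot occur under the hypotheses.

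The missing idea is a dichotomy on the image of $\pi_1(T)$ in $\pi_1(M)$. If the core $\gamma$ has infinite order, then $\widetilde T$ is a cylinder on which $\gamma$ acts by translation, and $d(\gamma^n p,p)\to\infty$ (balls in the Cayley graph are finite, and the Cayley graph is quasi-isometric to $\mt$); but a lifted interior leaf of the Reeb component accumulates only on \emph{one} end of this cylinder, so it is at infinite Hausdorff distance from $\widetilde T$, contradicting uniformity. Hence under the hypotheses the boundary torus group must die in $\pi_1(M)$ (or be finite), so the Reeb component lifts and $\Ft$ has compact leaves; then uniformity makes every leaf of $\Ft$ a finite Hausdorff distance from a compact set, hence bounded, hence of compact closure. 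The ``acts freely'' consequence is then immediate: if a non-torsion $\gamma$ fixed a leaf $L$, points $\gamma^n p$ of the compact set $\overline L$ would keep $d(\gamma^n p,p)$ bounded, contradicting the same escape-to-infinity property. Your closing argument via ``homology of $\overline L$'' or fixed points on a leaf-space interval is both more complicated and not needed.
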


\begin{proof}
Let us first assume that the fundamental group of the boundary tori of the Reeb component does not map to $0$ in the fundamental group $\pi_1(M)$ of $M$. This implies that the Reeb torus lifts to a Reeb cylinder where leaves accumulate on one end of the cylinder. 
Let $\gamma$ represent the deck transformation associated with 
the core of the Reeb component.
Assume that the basepoint $p$ is in a lift $\tilde \gamma$.
Then in $\widetilde M$, $d(\gamma^n p, p) \rightarrow \infty$.
One can see this in the Cayley graph of $\pi_1(M)$ with an
edge metric. 
The Cayley graph is quasi-isometric to the universal cover $\widetilde M$.
In the Cayley graph there are finitely many elements in the 
ball of any radius and this implies that 
$d(\gamma^n p, p) \rightarrow \infty$.
In particular this implies that the 
leaves inside of the cylinder are not a bounded distance away from the cylinder, so the foliation is not uniform. 

Now, assume that the Reeb component lifts to $\mt$, so there are compact leaves of $\Ft$. It follows that every leaf $L \in \Ft$ is a finite 
Hausdorff distance from a compact leaf. In particular
$L$ is bounded, therefore its closure is compact. 
\end{proof}

In particular this implies that if $\cF$ is uniform and
every closed transversal is not null homotopic then $\cF$
is Reebless, as announced in subsection  \ref{ss.uniffol}.

\subsection{Lifts of leaves at a bounded distance}\label{ss.bddist}

In this section we prove some general results about Reebless foliations. We will use them in the next subsection to prove Theorem \ref{teo-Rcovered}. 

Recall that the leaf space $\cL_\cF = \mt/_{\Ft}$ in this case is a simply connected 1-dimensional manifold which is possibly non-Hausdorff. This is because for every leaf $L \in \Ft$ if $t$ is a transversal (i.e. a curve transverse to $\Ft$ homeomorphic to an open interval and intersecting $L$) it holds that $t$ intersects each leaf of $\Ft$ at most once (cf. \S \ref{ss.taut}). 

We note here that a natural idea would be to use the Hausdorff distance between leaves in the universal cover to show that the leaf space is Hausdorff. For a Reebless, uniform foliation, leaves in $\mt$ separate and hence the Hausdorff distance between
leaves induces a metric in the leaf space, which we can call 
the Hausdorff metric. 
However in general this metric induces a topology which is 
completely different from the quotient topology in the leaf space due to lack of compactness.
Consider for example an Anosov flow which is $\R$-covered
but not topologically conjugate to a suspension.
It follows that if $\cF$ is the weak stable foliation of this flow,
then $\cF$ is also uniform \cite{Thurston}.
Transversely to this foliation there is a strong unstable foliation
and using that it is very easy to see that there is $a_0 > 0$ such that
for any two distinct leaves $L, E$ of $\widetilde \cF$,  then
the Hausdorff distance between $L$ and $E$ is finite but bigger
than $a_0$. In other words the Hausdorff metric induces the
discrete topology
in the leaf space. This is completely different from the quotient
topology making it homeomorphic to the reals.
Notice that it is easy to see that the topology induced by
the Hausdorff metric is always bigger than the quotient 
topology.

As explained in \S \ref{ss.taut}, when $\cF$ is not $\R$-covered, there 
are
\emph{non-separated} leaves of $\Ft$: that is, leaves $L,F \in \Ft$ so that for every transversals $t_L, t_F$ to respectively $L$ and $F$ one has a leaf $E \in \Ft$ which intersects both $t_L$ and $t_F$. Notice that if $L, F \in \Ft$ are distinct
 non-separated leaves, then they cannot intersect a common foliation chart, so the distance between points in one leaf to the other leaf is bounded from below. 

We give some more definitions. We refer the reader to \cite[Section 3 and Appendix B]{BFFP2} for a broader introduction with similar notation. We can assume that the foliation is transversally oriented by going to a double cover and this makes no problem in our results since we are working in $\mt$. Given two leaves $L, F \in \Ft$,
the \emph{region between $L$ and $F$} is the intersection of the connected component of $\mt \setminus L$  containing $F$ and the connected component of $\mt\setminus F$ containing $L$. 

\begin{remark}\label{r.nestednonsep}
If $L$ and $F$ are non-separated and distinct,
then no transversal to $L$ can intersect $F$. Otherwise any leaf intersecting the transversal between $L, F$ would
separate $F$ from $L$.
\end{remark}

The following general result holds. 

\begin{prop}\label{p.sepbdcompact} 
Let $\cF$ be a Reebless foliation of a closed 3-manifold $M$. Assume that there are distinct leaves $L, F \in \Ft$ a finite Hausdorff distance apart, and which are non separated from each other (i.e. there is a sequence of leaves $L_n \in \Ft$ which converges to both in the leaf space).
Then, both $L$ and $F$ project into compact leaves in $M$. 
\end{prop}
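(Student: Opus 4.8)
The plan is to argue by contradiction. Suppose $L$ and $F$ are non-separated, a finite Hausdorff distance $a_0$ apart, but (say) $L$ does not project to a compact leaf in $M$. Let $L_n \to L$ and $L_n \to F$ in the leaf space, and fix the region $R_n$ between $L_n$ and $L$; by Remark~\ref{r.nestednonsep} and the non-separation, for large $n$ the leaf $L_n$ lies in the region between $L$ and $F$, so $R_n$ is a thin region trapped between $L$ and (a leaf eventually close to) $F$, hence of bounded transverse extent. The strategy is to use recurrence in $M$: since $L$ is not compact, its projection $\ell = p(L)$ to $M$ is a non-compact leaf whose closure contains a point around which $\cF$ looks like a non-trivial local product, and one can find a closed transversal $\sigma$ to $\cF$ through a point of $\overline{\ell}$ together with a deck transformation $\gamma \in \pi_1(M)$ translating along a lift $\tilde\sigma$. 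The key point I would extract is that $\gamma$ must (up to replacing $L,F$ by nearby leaves and passing to a subsequence) preserve the pair $\{L,F\}$ or push $L$ towards $F$ monotonically in the leaf space, contradicting the non-separation or producing a Reeb-type configuration forbidden by the Reebless hypothesis.

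More concretely, the first step is to set up the geometry: leaves of $\Ft$ separate $\mt$ (Palmeira, $\mt \cong \R^3$), and for non-separated distinct $L,F$, every transversal hitting $L$ misses $F$, so $F$ lies on one definite side of $L$; the region $B$ between $L$ and $F$ is an open set whose closure $\overline{B}$ meets $L$ and $F$ along their entirety, and the condition that $d_{\mathrm{Haus}}(L,F) = a_0 < \infty$ means $\overline{B}$ is ``thin'' — every point of $L$ is within $a_0$ of $F$ and vice versa. The second step is to transport this picture by deck transformations: if $\gamma_k \in \pi_1(M)$ is chosen so that $\gamma_k x_k \to y$ for a base point $x_k \in L$ with $d(x_k, \gamma_k^{-1}\text{(basepoint)})$ controlled, then the translates $\gamma_k L$, $\gamma_k F$ sub-converge (in the cylinder-at-infinity/leaf-space topology near $y$) to leaves $L', F'$ which are again non-separated and $a_0$-close; the non-compactness of $\ell$ forces the $\gamma_k L$ to be pairwise distinct for infinitely many $k$, giving infinitely many distinct leaves all within Hausdorff distance $2a_0$ of a fixed leaf through $y$.

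The third step — which I expect to be the main obstacle — is to derive a contradiction from this accumulation of infinitely many distinct, mutually non-separated-in-pairs, uniformly Hausdorff-close leaves near a common leaf. The idea is that non-separated leaves cannot cross a common foliation chart (so distinct non-separated leaves are a definite distance $\delta>0$ apart at every point), yet they are all squeezed into a bounded-diameter neighborhood of one leaf; combined with properness of leaves and local compactness one should be able to extract either a limit leaf that is non-separated from itself (impossible) or an honest closed transversal meeting infinitely many of these leaves and closing up after finitely many — which by Novikov/Reeblessness is exactly what cannot happen for a transversal that stays in a bounded set unless the leaves it meets are compact. Pinning down exactly which bounded geometry statement does the work, and handling the non-Hausdorff leaf space carefully (the leaves $L_n$ converging to \emph{both} $L$ and $F$ must be used to control the side structure), is the delicate part; the conclusion will be that both $L$ and $F$ must project to compact leaves, as claimed. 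Symmetry in $L \leftrightarrow F$ then finishes the argument.
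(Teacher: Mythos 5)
Your second step is essentially the paper's setup: take recurrence points $p_i \to p$ on the non-compact projection of $L$, lift to $x_i \in L$, choose $\gamma_i$ with $\gamma_i x_i \to x_0 \in L_0$, use the finite Hausdorff bound $R$ to pick companion points $y_i \in F$ with $d(x_i,y_i)<R+1$ so that (after a subsequence) $\gamma_i y_i \to y_0 \in F_0$, and note that non-compactness forces the $\gamma_i L$ to be pairwise distinct. Up to there you are on track. The genuine gap is your third step, which you yourself flag as undetermined, and the mechanisms you propose for the contradiction do not work. ``A limit leaf non-separated from itself'' is not a coherent configuration to rule out, and the closed-transversal claim is false as stated: Reebless (even taut) foliations have closed transversals through non-compact leaves, and a transversal in a bounded set meeting infinitely many of the translates contradicts nothing by Novikov. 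Moreover, your phrase ``mutually non-separated-in-pairs'' is unjustified: only each pair $\gamma_k L,\ \gamma_k F$ is non-separated; distinct translates $\gamma_k L$ and $\gamma_j L$ need not be non-separated from each other, so the ``definite distance $\delta>0$ apart'' property does not apply to the whole family, and the picture of infinitely many uniformly separated leaves squeezed near one leaf is not what you actually have.

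The missing idea is to exploit the limit pair $(L_0,F_0)$ itself together with Remark \ref{r.nestednonsep}, not a global transversal argument. First, $L_0 \neq F_0$: otherwise a path in $L_0$ from $x_0$ to $y_0$ shows that for large $i$ the leaves $\gamma_i L$ and $\gamma_i F$ meet a common transversal, which is forbidden for a non-separated pair. Next, transversals $t_{x_0}$, $t_{y_0}$ through $x_0$, $y_0$ are crossed by $\gamma_i L$, $\gamma_i F$ respectively for large $i$, and non-separation of $\gamma_i L$ from $\gamma_i F$ forces $L_0$ and $F_0$ to be non-separated. Now the dichotomy on the position of $\gamma_i L$ relative to the region between $L_0$ and $F_0$ gives the contradiction: if $\gamma_i L$ lies outside that region, then $L_0$ separates $\gamma_i L$ from $\gamma_i F$, contradicting that they are non-separated; if $\gamma_i L$ (and $\gamma_i F$) lie inside, then since $\gamma_i L \to L_0$ from the side containing $F_0$ and $L_0,F_0$ are non-separated, $\gamma_i L$ also accumulates on $F_0$ and hence crosses $t_{y_0}$, so $\gamma_i L$ and $\gamma_i F$ meet a common transversal — again contradicting Remark \ref{r.nestednonsep}. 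The contradiction is with the pairwise distinctness of the $\gamma_i L$, i.e.\ with non-compactness of the projection of $L$; symmetry handles $F$. Your sketch never isolates this case analysis, and without it the ``accumulation of thin regions'' picture does not by itself produce a contradiction.
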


\begin{proof}
Let $A$ and $B$ be the projections of $L$ and $F$ respectively to $M$. Assume that $A$ is not compact. Hence there is a sequence of points $p_i$ in $A$ such that $p_i \to p$ so that $p_i$ are not in the same plaques
of a local chart around $p$. Without loss of generality we can assume that 
the sequence $p_i$ is strictly monotone in the plaques of the chart.

One can lift the points $p_i$ to points $x_i \in L$ and consider $\gamma_i \in \pi_1(M)$ so that $\gamma_i x_i \to x_0$ a lift of $p$. 
Let $L_0$ be the leaf of $\Ft$ through $x_0$. The fact that the points $p_i$ converge to $p$ in different local leaves implies that $\gamma_i L$ are pairwise distinct leaves of $\Ft$ $-$ as transversals intersect a leaf only once in 
$\widetilde M$.  It is exactly this property that we will show
produces a contradiction.

Denote by $R>0$ a bound of the Hausdorff distance between $L$ and $F$. One can choose points $y_i \in F$ so that $d(y_i,x_i) < R+1$. Up to a subsequence, we can assume that $\gamma_i y_i \to y_0 \in F_0 \in \Ft$. Notice that $L_0 \neq F_0$ for otherwise one could fix a curve in $L_0$ from $x_0$ to $y_0$ and that would lift to nearby leaves, giving that $\gamma_i L$ and $\gamma_i F$ intersect the same transversal for large $i$ which is impossible since $L$ is non-separated from  $F$ and $L \not = F$
 (cf. Remark \ref{r.nestednonsep}). 

Now, pick transversals $t_{x_0}$ and $t_{y_0}$ to the leaves $L_0$ and $F_0$ through $x_0$ and $y_0$ respectively. For large $i$ it follows that the plaques through $\gamma_i x_i$ and $\gamma_i y_i$ intersect $t_{x_0}$ and $t_{y_0}$ respectively.  and so $t_{x_0}$ is a transversal to $\gamma_i L$ and $t_{y_0}$ a transversal to $\gamma_i F$. Since $\gamma_i L$ and $\gamma_i F$ are non-separated it follows that there are leaves intersecting both $t_{x_0}$ and $t_{y_0}$ which implies that $L_0$ and $F_0$ are non-separated
from each other. 

Assume first that $\gamma_i L$ does not belong to the region between $L_0$ and $F_0$. 

In this case $L_0$ separates $\gamma_i L$ from $\gamma_i F$ which is a contradiction since they are non-separated. In fact, if one considers a transversal $t$ to $\gamma_i L$ which is contained in the component of $\mt \setminus L_0$ not containing $F_0$ it follows that every leaf intersecting $t$ must remain in this component while $\gamma_i F$ must intersect a small transversal to $F_0$ so belong to a different connected component of $\mt \setminus L_0$ showing that $\gamma_i L$ and $\gamma_i F$ cannot be non-separated. The same works for $\gamma_i F$.

Suppose now that both 
$\gamma_i L$ and $\gamma_i F$ belong to the region between $L_0$ and $F_0$.
Recall that $(\gamma_i L)$ converges to $L_0$ and now $\gamma_i L$ is in the complementary component of $L_0$ containing $F_0$. In particular
the sequence $(\gamma_i L)$ also converges to $F_0$. 
Hence for $i$ big $\gamma_i L$ intersects $t_{F_0}$. 
Since for $i$ big the leaf $\gamma_i F$ also intersects
$t_{F_0}$ this would show that $\gamma_i F, \gamma_i L$ intersect
a common transversal contradicting the fact that $F, L$ not separated
from each other.

In other words, what these arguments really show is that the 
assumption that $\gamma_i L$ are all distinct leads to
a contradiction.

This finishes the proof of the proposition.
\end{proof}

The following result also holds in great generality. Notice that even if we assumed that $\cF$ is uniform, the result is not immediate since a priori we don't know if the region between two leaves has to be contained in a
bounded  neighborhood of one of the leaves. This is indeed what we show here for leaves which project into compact surfaces. 
Given a leaf $L$ of $\widetilde \cF$, let $\Gamma_L$ be the subgroup
of deck transformations fixing $L$, in other words, the stabilizer
of $L$ in $\pi_1(M)$. Notice that $\pi(L) = L/\Gamma_L$.

\begin{prop}\label{p.compactbd}
Let $\cF$ be a transversely oriented,
Reebless foliation of a closed 3-manifold. Let $L, F \in \Ft$ leaves at bounded Hausdorff distance whose projection to $M$ are compact surfaces. 
Let $\widetilde N$ be
the region between $L$ and $F$.
Then $\widetilde N$ projects to a compact $[0,1]$-bundle in $\widetilde M/\Gamma_L$.
\end{prop}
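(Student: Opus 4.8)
The plan is to first pass to the quotient by $\Gamma_L$ and reduce to a statement about a single compact leaf together with its product neighborhood. Since $\pi(L) = L/\Gamma_L$ is a compact surface, and $L, F$ are at bounded Hausdorff distance, $\Gamma_L$ also acts cocompactly on $F$ (indeed $\tau_{L,F}$ is a $\Gamma_L$-equivariant quasi-isometry between $L$ and $F$, so $F/\Gamma_L$ is a closed surface, hence $F$ also projects to a compact leaf and $\Gamma_F \supseteq \Gamma_L$ up to finite index; actually we only need that $\Gamma_L$ acts cocompactly on $F$). The region $\wt N$ between $L$ and $F$ is $\Gamma_L$-invariant because both $L$ and $F$ are, so it makes sense to form $\wt N/\Gamma_L$; the claim is that this is a compact $[0,1]$-bundle. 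The key point to establish is that $\wt N$ is contained in a bounded neighborhood of $L$ (equivalently of $F$): once we know $\wt N \subseteq \{x : d(x,L) \le D\}$ for some $D$, then $\wt N/\Gamma_L$ sits inside the $D$-neighborhood of the compact leaf $\pi(L)$ inside $\widetilde M/\Gamma_L$, which is compact.

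The main step, then, is the boundedness of $\wt N$ over $L$. First I would argue that every leaf $E \subseteq \wt N$ is itself at bounded Hausdorff distance from $L$, with a bound independent of $E$: since $\cF$ is Reebless and transversely oriented, $E$ separates $L$ from $F$ in $\wt M$, so $E$ lies ``between'' them in the nesting order; pushing points of $E$ toward $L$ along transversals (using that $L, F$ are a bounded Hausdorff distance $R$ apart, so for any $x \in E$ there are points of $L$ and of $F$ within distance roughly $R$ on either side, and $E$ is squeezed between), one gets that $E$ is within Hausdorff distance $\le R$ (up to additive constants) of $L$. Then I would show the union of these leaves — which is exactly $\wt N$ — is contained in a bounded neighborhood of $L$: this is where some care is needed, because a union of sets each within Hausdorff distance $R$ of $L$ need not be within a bounded neighborhood of $L$ in general. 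Here one uses compactness of $\pi(L)$: cover $\pi(L)$ by finitely many foliation charts; in each chart the plaques of $\cF$ lying in the image of $\wt N$ form a sub-bundle, and transverse to $L$ the region $\wt N$ is a connected (interval) piece of the local leaf space, so locally $\wt N$ looks like $(\text{plaque}) \times [0, \text{height}]$ with the height a function on $\pi(L)$; by the leaf-space argument (no Reeb components, $\wt N$ is the region strictly between two separating leaves and $E \ne L$ for $E$ interior) this height function is finite, and then one needs it to be bounded — which follows from compactness of $\pi(L)$ together with upper semicontinuity / a limiting argument: if heights were unbounded there would be a sequence of points escaping to infinity transversally over a convergent sequence in $\pi(L)$, producing in the limit a leaf non-separated from $\pi(L)$ (or from $\pi(F)$), which by Proposition \ref{p.sepbdcompact} would force extra compact leaves and ultimately contradict that $L, F$ bound a product region — or more directly contradicts that $E \ne L, F$ throughout the interior.

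Once $\wt N$ is known to lie in the $D$-neighborhood of $L$, the bundle structure is the standard one: $\wt N$ is foliated by the leaves $E$ of $\Ft$ it contains, each such $E$ is a plane separating $\wt M$, the leaves are linearly ordered by nesting, and any transversal arc crosses each such $E$ exactly once (Reebless) and crosses all of them between $L$ and $F$ (connectedness of $\wt N$), so $\wt N \cong L \times [0,1]$ with $L = L \times\{0\}$, $F = L \times \{1\}$, the projection $\wt N \to L$ being along the transverse foliation. This product structure is $\Gamma_L$-equivariant (the transverse foliation and the leaves are $\pi_1(M)$-invariant, hence $\Gamma_L$-invariant), so it descends to $\wt N/\Gamma_L \cong \pi(L) \times [0,1]$, a compact $[0,1]$-bundle over the closed surface $\pi(L)$, as claimed. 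The hard part, as indicated, is the uniform boundedness of the transverse ``height'' of $\wt N$ over the compact leaf; everything else is a routine unwinding of the Reebless/non-Hausdorff leaf-space formalism already set up in \S\ref{ss.taut} and \S\ref{ss.bddist}.
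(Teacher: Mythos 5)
Your outline superficially parallels the paper's (quotient, compactness, product structure), but three of its load-bearing steps are not justified, and two of them are exactly the points the paper has to work for. First, $\Gamma_L$ does not act on $F$ a priori: an element $\gamma\in\Gamma_L$ sends $F$ to some other leaf $\gamma F$ at the same Hausdorff distance from $L$, so $\tau_{L,F}$ is not $\Gamma_L$-equivariant, ``$F/\Gamma_L$'' is not defined, and neither $\widetilde N$ nor your eventual product structure is $\Gamma_L$-invariant. Your aside ``$\Gamma_F\supseteq\Gamma_L$ up to finite index'' is the right statement, but it needs a proof; the paper's first step is precisely this, using compactness of $\pi(F)$ (the orbit of $F$ is closed, only finitely many translates of $F$ meet a ball of radius $R+1$ about a point of $L$, hence the stabilizer of $F$ in $\Gamma_L$ has finite index), after which one works with $\Gamma=\Gamma_L\cap\Gamma_F$. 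Second, your central boundedness step is circular: the assertion that any $x\in E$ has points of $L$ and of $F$ within roughly $R$ ``on either side'' is exactly what has to be proved. The Hausdorff bound between $L$ and $F$, together with the fact that $E$ separates them, only gives $L\cup F\subset N_R(E)$; it gives no bound on how far points of $E$, or of the open region $\widetilde N$, may stray from $L\cup F$ --- this is the difficulty the paper flags explicitly before stating the proposition. The subsequent ``height function'' limiting argument does not rescue this: escaping transversally in the noncompact manifold $\mt/\Gamma_L$ produces no limit leaf there, ``non-separated from $\pi(L)$'' is not a configuration to which Proposition \ref{p.sepbdcompact} applies, and the purported contradiction is not one --- extra compact leaves, or even branching, inside $\widetilde N$ are perfectly compatible with the hypotheses (e.g.\ a region foliated as a product entirely by compact leaves).

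Third, your trivialization $\widetilde N\cong L\times[0,1]$ ``along the transverse foliation'' tacitly assumes the leaf space of $\Ft$ restricted to $\widetilde N$ is an interval, i.e.\ that there is no branching between $L$ and $F$; nothing in the hypotheses gives this (the proposition is used precisely in a proof by contradiction where $\cF$ is not $\R$-covered), and in any case a product structure built this way would still need the equivariance addressed above. The paper obtains the product topologically, not foliation-theoretically: in $\mt/\Gamma$ both leaves descend to compact surfaces $A,B$, the region between them is a compact irreducible $3$-manifold whose boundary components are $\pi_1$-injective and $\pi_1$-surjective, so Hempel's theorem gives $A\times[0,1]$; projecting to $\mt/\Gamma_L$ (a further finite quotient, using transverse orientability) yields the compact $[0,1]$-bundle --- note the conclusion is a bundle, not necessarily the trivial product over $\pi(L)$ you claim. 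As written, the proposal is missing the finite-index stabilizer argument, any genuine proof of the boundedness of $\widetilde N$ over $L$, and the Hempel-type input that replaces the unavailable product-foliation picture.
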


\begin{proof}
Notice that $\gamma F$ is at bounded Hausdorff distance from $L$ for every $\gamma \in \Gamma_L$ since deck transformations are isometries and $\gamma L= L$. As $F$ projects onto a compact surface, it follows that the orbit of $F$ by $\pi_1(M)$ is 
a closed subset of $\mt$. 

Let $R>0$ be the Hausdorff distance between $L$ and $F$ and consider a closed ball $B$ of radius $R+1$ centred at a point $x_0 \in L$. After covering $B$ with finitely many foliation charts, by compactness one sees that only finitely many translates of $F$ can intersect $B$. Since every translate of $F$ by some element of $\Gamma_L$ must intersect $B$, this implies that the action of $\Gamma_L$ in $F$ has finitely many translates of $F$. One deduces that the stabilizer of $F$ in $\Gamma_L$ is a finite index subgroup of $\Gamma_L$. 

The symmetric argument says that $\Gamma_F$ has a finite index subgroup fixing $L$. We deduce that $\Gamma = \Gamma_F \cap \Gamma_L$ is finite index in both $\Gamma_F$ and $\Gamma_L$. 

Consider the quotient $\mt/_{\Gamma}$ of $\mt$ by the group $\Gamma$. It follows that both $L$ and $F$ project to compact leaves in
$\mt/_{\Gamma}$.

The region $\widetilde N$ between $L, F$  projects to a compact 3-manifold with boundary $N_{\Gamma}$ in $\mt/_{\Gamma}$,
whose boundaries are the quotients $A$ and $B$ of $L$ and $F$. 
Since $\cF$ is Reebless and transversely oriented then leaves
of $\cF$ are $\pi_1$ injective in $\pi_1(M)$, so $\pi_1(A), \pi_1(B)$
inject into $\pi_1(N_{\Gamma})$. Clearly $\pi_1(A), \pi_1(B)$ also
surject into $\pi_1(N_{\Gamma})$.
In addition $N_{\Gamma}$ is irreducible.

It follows (see \cite[Theorem 10.2]{Hempel}) that $N_{\Gamma}$ is homeomorphic to $A \times [0,1]$ and $A \times \{1\}$ corresponds to $B$.  
Projecting to $\widetilde M/\Gamma_L$ one gets that $\widetilde N$
also projects to an $[0,1]$-bundle with a boundary a leaf homeomorphic
to $C = L/_{\Gamma_L}$. This uses that $\cF$ is
transversely oriented.
\end{proof}

We need one additional result.

\begin{prop}\label{p.t2i}
Suppose that $\cF$ is a Reebless foliation in $N = \TT^2 \times [0,1]$, so that
each boundary component is a leaf of $\cF$. Suppose that in 
$\widetilde N$, the boundary leaves $E = \widetilde \TT^2 \times \{ 0 \}$
and $G = \widetilde \TT^2 \times \{ 1 \}$ are not separated from each other
in the leaf space of $\widetilde \cF$. Then $\cF$ is not 
a uniform foliation.
\end{prop}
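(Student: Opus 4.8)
The plan is to show that if such a uniform foliation $\cF$ of $N = \TT^2\times[0,1]$ existed with the boundary leaves $E, G$ non-separated in $\widetilde{N}$, then one produces a contradiction by examining how leaves accumulate. The starting point is that non-separated leaves $E, G$ are the limits of a sequence $L_n \in \widetilde\cF$ converging to both in the leaf space. By hypothesis $\cF$ is uniform, so all leaves are at finite Hausdorff distance, and I would first invoke (the structure behind) Proposition~\ref{p.sepbdcompact}: since $E$ and $G$ are non-separated, distinct, and a finite Hausdorff distance apart, both project to compact leaves in $N$ — which here is automatic since $E, G$ are covers of the boundary tori $\TT^2\times\{0\}$, $\TT^2\times\{1\}$. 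The point of recalling this is to set up the local picture near a non-separated limit leaf: there is a transversal $t_E$ to $E$ such that the $L_n$ all intersect $t_E$, approaching $E$ monotonically along $t_E$, and likewise a transversal $t_G$ to $G$.

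\textbf{Main step.} I would next exploit the product structure $N = \TT^2\times[0,1]$. Pick the deck transformation subgroup: $\pi_1(\TT^2) = \ZZ\oplus\ZZ$ acts on $\widetilde{N}$ preserving $E$ and $G$ (each boundary leaf is invariant under all of $\pi_1(\TT^2\times[0,1]) = \ZZ\oplus\ZZ$). So $\Gamma_E = \Gamma_G = \pi_1(N) = \ZZ\oplus\ZZ$ and $E, G$ are each a plane on which $\ZZ\oplus\ZZ$ acts cocompactly. Now consider a leaf $L_n$ close to $E$: since $\cF$ is Reebless and $L_n$ is a plane, $L_n$ is properly embedded and $\pi_1$-injective, hence simply connected. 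The deck group $\ZZ\oplus\ZZ$ does \emph{not} fix $L_n$ (otherwise $L_n$ would be a compact leaf, a torus, contradicting $L_n\neq E, G$ being the only torus leaves adjacent to the boundary — or at least $L_n$ would be one more torus leaf, and one can repeat the argument with $L_n$ in place of a boundary leaf until reaching a genuine plane leaf in the interior, which must exist by non-separatedness). The key tension: on the one hand $L_n$ stays a finite Hausdorff distance $R$ from $E$; on the other, $L_n$ must also eventually intersect the transversal $t_G$ near $G$ (since $L_n\to G$ as well), so $L_n$ comes within distance $R$ of both $E$ and $G$; and $L_n$ is non-compact, so it has unbounded diameter, forcing $L_n$ to escape in $\widetilde E$-directions. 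I would then show that a non-compact leaf trapped in the region between two compact leaves $E, G$ at finite Hausdorff distance, carrying a free $\ZZ\oplus\ZZ$-action transverse to it, leads to a contradiction with uniformity: either $L_n$ accumulates on a translate $\gamma L_n$ producing a limit leaf that is non-separated from $E$ but lies strictly between, contradicting Remark~\ref{r.nestednonsep}, or one directly contradicts the non-separatedness of $E, G$ exactly as in the proof of Proposition~\ref{p.sepbdcompact}. The cleanest route is probably to apply Proposition~\ref{p.compactbd}: $E$ and $G$ are compact leaves at bounded Hausdorff distance, so the region $\widetilde N$ between \emph{a leaf just inside from $E$} and $G$ would project to a compact $[0,1]$-bundle in $\widetilde M/\Gamma_E$, i.e. all leaves between them are compact tori — but then the leaf space between $E$ and $G$ is a closed interval and $E, G$ are its endpoints, hence \emph{separated}, contradiction.

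\textbf{Assembling.} So the argument is: assume $\cF$ uniform; by Novikov/Reebless all leaves in $\widetilde N$ are properly embedded planes except possibly torus leaves; $E, G$ non-separated means the minimal set/structure between them is not a single interval's worth of separated leaves, so there is at least one non-separated pair; apply Proposition~\ref{p.sepbdcompact} to conclude the relevant non-separated leaves are compact — but in $\TT^2\times[0,1]$ a compact leaf is a torus $\TT^2\times\{t\}$ parallel to the boundary, and two such are always separated from each other in the leaf space (the leaf space restricted to compact leaves is an honest interval). This already contradicts $E, G$ being non-separated, \emph{provided} I can show that uniformity forces the leaf space of $\widetilde\cF$ restricted to the region between $E$ and $G$ to be Hausdorff, which is a special case of what Theorem~\ref{teo-Rcovered} is proving — so to avoid circularity I must argue directly. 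The direct argument: take the sequence $L_n \to E$ and $L_n\to G$; by uniformity $L_n$ is within $R$ of $E$, and by Proposition~\ref{p.compactbd} applied to the compact leaves $E$ and $G$, the region between them is a product $\TT^2\times[0,1]$ foliated so that every leaf separates $E$ from $G$ — but the $L_n$ near $E$ do not separate $E$ from $G$ (they converge to $E$), and yet they lie in the region between, contradiction.

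\textbf{The hard part} will be the circularity concern: Proposition~\ref{p.t2i} is used inside the proof of Theorem~\ref{teo-Rcovered}, so I cannot assume $\cF|_N$ is $\RR$-covered. The resolution is that Proposition~\ref{p.compactbd} does \emph{not} assume the foliation is $\RR$-covered — it only needs two compact leaves at bounded Hausdorff distance — so it is legitimate to invoke it here, and it directly yields that the region between $E$ and $G$ is a foliated $\TT^2\times[0,1]$ with \emph{every} leaf a torus separating the two boundary leaves, whence the leaf space of that region is a closed interval with $E, G$ as separated endpoints, contradicting the hypothesis that $E$ and $G$ are non-separated. I would write this out carefully, making sure the application of Proposition~\ref{p.compactbd} is valid (the projections of $E, G$ to $N$ are the boundary tori, hence compact, and their Hausdorff distance in $\widetilde N$ is bounded by uniformity), and then conclude that no such uniform $\cF$ exists.
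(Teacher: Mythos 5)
Your central step fails: you read Proposition~\ref{p.compactbd} as saying that the region between $E$ and $G$ is foliated entirely by compact tori, so that the leaf space between them is a closed interval and $E,G$ are separated. The proposition says nothing of the sort. Its conclusion is only that the region between the two compact leaves projects to a compact $[0,1]$-bundle, and in the present situation that conclusion is vacuous: the region between $E$ and $G$ is all of $\widetilde N$, and its projection is $N=\TT^2\times[0,1]$ itself. A foliated $I$-bundle over $\TT^2$ with boundary leaves can perfectly well have non-compact interior leaves (planes and annuli spiraling onto the boundary), and such foliations can have the two boundary lifts $E,G$ non-separated in the leaf space of $\widetilde\cF$. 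Indeed they must exist, since non-$\RR$-covered Reebless foliations exist and Theorem~\ref{teo-Rcovered} reduces to exactly this configuration; the proposition does not claim the configuration is impossible, only that it is incompatible with uniformity. Your argument, if correct, would rule out the configuration outright and would never use the finite Hausdorff distance between an \emph{interior} leaf and the boundary -- a sign that uniformity, which is the whole content of the statement, has not actually entered. The same conflation appears in your appeal to Proposition~\ref{p.sepbdcompact}: $E$ and $G$ are already compact parallel tori downstairs, and yet their lifts are assumed non-separated; compactness of the non-separated leaves is the hypothesis' starting point here, not a contradiction. (There is also the smaller mismatch that Proposition~\ref{p.compactbd} is stated for closed manifolds, but that is not the real issue.)

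The paper's proof is of a different nature and does use uniformity essentially. Assuming some interior leaf $L$ is a bounded Hausdorff distance from $E$, one first rules out compact interior leaves, then finds a simple closed curve $\alpha$ on the boundary torus with contracting holonomy; non-separation of $E$ and $G$ forces the corresponding curve on the other boundary component to have contracting holonomy as well. A compactness argument using Brittenham's theorem on essential laminations in Seifert fibered spaces shows every point of a leaf is a bounded leafwise distance from the $\epsilon$-neighborhood of $\partial N$. Combining this with the assumed bounded distance from $E$ to $L$ and applying the holonomy of $\alpha^n$ produces infinitely many intersections of $L$ with a single transversal, a contradiction. If you want to salvage your approach you would need to replace the false ``all leaves between are compact'' inference by an argument of this kind that genuinely exploits the contracting holonomy forced by non-separation together with the bounded-distance hypothesis.
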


\begin{proof}
Given a leaf $L$ in the interior of $\widetilde N$, we will 
show it cannot be a finite Hausdorff distance from either 
one of the boundary
leaves. 
Since $N$ is a product there is $b_0 > 0$ so that
$\widetilde N$ is  contained in the neighborhood of size
$b_0$ of $E$, and likewise for $G$. 
We will show that $E$ cannot be in a bounded neighborhood
of any such $L$ as above.

Lifting to a double cover if necessary
we can assume that $\cF$ is transversely
orientable.

Since $\cF$ is Reebless the fundamental group of leaves injects
in $\pi_1(N)$, so the leaves are either planes, annuli
or tori. 
If there is a compact leaf in the interior of $N$,
then its fundamental group injects in $\ZZ^2 = \pi_1(N)$, so it is a torus, and hence it 
is isotopic to $\TT^2 \times \{ 0 \}$. It lifts to
a leaf $Z$ in $\widetilde N$ which separates $E$ from $G$,
contradiction. 
So the leaves in the interior of $N$ are only
planes and annuli.

Let $A = \TT^2 \times \{ 0 \}$,
$B = \TT^2 \times \{ 1 \}$.
We look at the holonomy of $\cF$ along a boundary leaf,
say $A$.
We want to find an element of $\pi_1(A)$ with contracting holonomy.
Fix $x$ a basepoint in $A$, let $\tau$ be a small transversal
to $\cF$ at $x$.
Let $\alpha$ represent a simple closed curve in $A$ not
null homotopic. If either $\alpha$ or $\alpha^{-1}$ has 
contracting holonomy, that is the element we want. 
Otherwise there are $p_i$ in $\tau$ converging to $x$ so
that $\alpha$ holonomy fixes $p_i$.  Fix $i$, let $C$ be the leaf
through $p_i$. Then $C$ is an annulus. Let now $\beta$ another
simple closed curve which generates $\pi_1(\TT^2)$
together with $\alpha$. If holonomy of $\beta$ fixes $p_i$
also then $C$ is in fact a compact leaf, but in the
interior of $N$, which we showed it is not possible.
So replacing $\beta$ by its inverse, the holonomy image
of $p_i$ under $\beta$ is closer to $x$. If the iterates
converge to $x$, then $\beta$ is the desired element.
Otherwise the iterates converge to $y$ not $x$, and the
leaf through $y$ is compact, again a contradiction.

Let then $\alpha$ be a simple closed curve in $A$ with
contracting holonomy. 
We think of $\alpha$ also as a deck transformation. 
Then $\alpha$ fixes $E$.

Fix a point $y$ in $E$ and a 
transversal $\tau$. Since holonomy of the foliation $\cF$
is contracting in the $\alpha$ direction this means
that $\alpha^{-1}(L)$ intersects $\tau$ and in a point closer
to $E$. The contracting holonomy means that the sequence
$(\alpha^n(L))$ converges to $E$ as $n \rightarrow -\infty$.
In fact this is an if and only if property: if there is 
$L$ intersecting $\tau$ so that $(\alpha^n(L))$ converges to
$E$ as $n \rightarrow -\infty$, then $\alpha$ has
contracting holonomy.

But $\alpha$ also preserves $G$. Since $E, G$ non separated
from each other, and $(\alpha^n(L))$ converges to $E$, it follows
that $(\alpha^n(L))$ also converges to $G$ when $n$ converges
to minus infinity. By the if and only if characterization above,
this implies the following:
If $\beta$ is a simple closed curve in $B$ freely homotopic to
$\alpha$ then the holonomy of $\cF$ along $\beta$ is contracting
as well.

\vskip .1in
We proceed with the proof of the proposition.
We consider a model of $N$ as $\TT^2 \times [0,1]$ so that $\widetilde N$
is homeomorphic to $\RR^2 \times [0,1]$ with coordinates
$(a,b,c)$ and any deck transformation acts as $(\theta(a,b),c)$,
where $\theta$ is a translation of $\RR^2$.
In that way we can choose coordinates so 
that $\alpha(a,b,c) = ((a,b)+(1,0), c)$.

Suppose now that $E$ is in a neighborhood of size $a_0$ of $L$.
For any  $n$ there is a point $p_n$ in $L$ which is $< a_0$ distant
from $(-n(1,0),0)$. 

\begin{claim} Given $\epsilon > 0$, there is $a_1 > 0$ so that
any point in a leaf $U$ of $\widetilde \cF$, it is less than
$a_1$ along $U$ from a point $\epsilon$ distant  from $E$ or $G$.
\end{claim}

\begin{proof} Suppose not. Project to $N$, we get bigger and bigger sets in
leaves which avoid an $\epsilon$ neighborhood of the boundary.
Taking a limit we find a leaf $V$ of $\cF$ avoiding an $\epsilon$
neighborhood of the boundary. The closure of $V$ is a lamination
in $N$ disjoint from the boundary. It is an essential
lamination $W$. Double $N$ to get a Seifert fibered space,
$W$ is still an essential lamination. By Brittenham's
result \cite{Brit}, $W$ has a sublamination that is either vertical
or horizontal in the double of $\TT^2 \times I$.
If $W$ is vertical it would have to intersect
a boundary component of $N$. This is a 
 horizontal $\TT^2$ in the double manifold.
This is a contradiction.
Suppose that $W$ is horizontal. It is also contained in $\TT^2 \times I$,
hence a ``topmost" leaf would have to be compact, hence a torus.
This is contained in the interior of $N$, again a contradiction.
This proves the claim.
\end{proof}

We fix $\epsilon > 0$ so that the foliation $\cF$ restricted
to the $\epsilon$ neighborhood of the boundary of $N$ is entirely
described by the holonomy maps. Let $a_1 > 0$ given by the claim.
So given $n$, there is $q_n$ in $L$, which is less than $a_1$
along $L$ from $p_n$ and $q_n$ is $\epsilon$ away from the 
boundary. Hence $q_n = ((-n,0)+ v_n, t_n)$ where $v_n$ is
bounded under $n$ and $|t_n| < \eps$ or 
$|t_n| > 1 - \eps$. Up to subsequence we assume that all 
$v_{n_i}$ are very close to $v_0$ (projection to $N$ all in a fixed
foliated chart).

Now apply the holonomy of $\alpha^n$ to $q_n$. Since $q_n$ is
$\epsilon$ close to the boundary and the holonomy of $\alpha$ 
is contracting in the neighborhood of size $\epsilon$ of
both $A$ and $B$ it follows that the holonomy image of $q_{n_i}$
is $(v_{n_i}, t_{n_i})$ where $t_{n_i}$ is either arbitrarily
close to $0$ or to $1$. None is either $0$ or $1$ as $L$ is in
the interior of $N$. They are all points in $L$, and this 
contradicts
that $L$ cannot intersect a transversal more than once.

This contradiction proves that the assumption that $E$ is at a
bounded distance from $L$ is impossible.
Hence the foliation $\cF$ is not uniform.
\end{proof}

\subsection{Proof of Theorem \ref{teo-Rcovered}}

Now we are ready to prove Theorem \ref{teo-Rcovered}. Let $\cF$ be a uniform Reebless foliation on $M$. We want to show that $\cF$ is $\R$-covered, so we assume by contradiction that there are leaves $L$ and $F$ of $\Ft$ which are non-separated in the leaf space $\cL_\cF = \mt/_{\Ft}$ of $\cF$. 
Up to a double cover we may assume that $\cF$ is transversely
oriented.

Proposition \ref{p.sepbdcompact} implies that both $L$ and $F$ project to compact surfaces in $M$. Let $\Gamma_L$ be the stabilizer of $L$ in
$\pi_1(M)$.  Proposition \ref{p.compactbd} shows that the region $\tild{N}$ between $L$ and $F$ projects to a compact $[0,1]$-bundle $W$ in $\widetilde M/\Gamma_L$, with one boundary $L/\Gamma_L$.

Suppose that there is a deck translate $\beta(L)$ of $L$ or $F$ inside $\tild{N}$.
It projects to a surface in $\widetilde M/\Gamma_L$ 
contained in the $[0,1]$-bundle $W$. 
Since $\pi(L)$ is compact in $M$, then $H = \beta(L)/_{\Gamma_L}$ is
also compact.
Since $H$ is $\pi_1$-injective
in $W$ it follows that $H$ is isotopic in $W$ to a boundary 
component. Lifting to $\widetilde M$ this implies that $\beta(L)$
separates $F$ from $L$, contradicting that they are non separated.

Let $A = \pi(L)$. 
Suppose that there is a closed transversal to
$\cF$ through $A$. 
Lift to $\widetilde M$, 
with the transversal intersecting $L$ and entering $\tild{N}$.
It cannot exit $\tild{N}$ as $F,L$ do not intersect a
closed transversal. Hence this produces a deck translate of $L$
inside $\tild{N}$ which we just proved cannot happen.
Hence there are no closed transversal through either $A$
or $B = \pi(F)$. 

On the other hand suppose there are $E_i$ converging to $F \cup L$
so that $\pi(E_i)$ is compact. For $i$ big enough $\pi(E_i)$ is 
isotopic to $A$, and hence $E_i$ separates $F$ from $L$, contradiction.
Hence $\pi(E_i)$ is non compact and there are transversals through
$\pi(E_i)$ for $i$ big enough. It follows that the region between
$A$ and $B$ is a dead end component, see \cite[Definition 4.27]{Calegari}. By  \cite[ Lemma 4.28]{Calegari},
$A, B$ are two sided tori or Klein bottles. Lifting to 
a double cover we can assume that both $A, B$ are tori.

It can be that $A = B$, but in any case $\tild{N}$ 
projects in $\widetilde M/\Gamma_L$ to a compact submanifold homeomorphic to $\TT^2 \times [0,1]$.

We can now apply Proposition \ref{p.t2i}.
Let $G$ be a leaf in $\tild{N}$. 
By Proposition \ref{p.t2i} it follows that $L$ is not
a bounded distance from $G$ in $\tild{N}$.
Suppose that this does not happen in $\widetilde M$.
Then there are points $p_i$ in $L$ which are $> i$ distant
from $G$ along path distance in $\tild{N}$,
 but a bounded distance in $\widetilde M$
from $q'_i$ in $G$. Notice that $q'_i$ is a bounded distance 
in $\tild{N}$ from $q_i$ in $L$ $-$ just follow along the
lift of the $I$-bundle structure to $\tild{N}$. If
one uses the parametrization $(a,b,c)$ as in Proposition
\ref{p.t2i} one can assume up to moving them boundedly
in $L$, that $p_i,q_i$ have all coordinates integers and the last
coordinate $0$. Consider a generating set of $\pi_1(M)$ which
includes $2$ generators of the torus $A$. Then $p_i, q_i$
are vertices of the Cayley graph. Modulo deck transformations
sending $p_i$ back to a base point, it follows that $q_i$
is a  bounded neighborhood of the origin. So only finitely
many elements of $\pi_1(M)$ are allowed. It follows that
$q_i$ is a bounded distance from $p_i$ along $L$.
This is a contradiction.

This completes the proof of Theorem \ref{teo-Rcovered}.

%%%%%%%%%%%%%%%%%%%%%%%%%%%%%%%%%%%%%%
\section{Universal circles and JSJ trees}\label{s.UnivJSJ}
In this section we will show that for $\R$-covered foliations (uniform or not)
one can recover the universal circle from the JSJ decomposition of the manifold (cf. Proposition \ref{prop-JSJvsUniversalCircle}), if the manifold has a non trivial JSJ decomposition. This will allow us to prove Proposition \ref{p.movingpoints} that we will need in the proof of Theorem \ref{teo-Minimal}.  Proposition \ref{p.movingpoints} states that the action of the fundamental group on the universal circle does not have fixed points which is certainly a fact that needs to be established if one desires to obtain minimality of the action. 

Consider an $\R$-covered foliation $\cF$ by leaves with 
curvature uniformly close to $-1$ on a closed 3-manifold $M$, so that $M$ has non trivial JSJ decomposition. In particular the leaves
are Gromov hyperbolic. 
If $\cF$ is not taut, then there are dead end components,
see \cite[Definition 4.27]{Calegari}. In particular there
are either tori or Klein bottle leaves. This is disallowed
by $\cF$ having Gromov hyperbolic leaves.
Hence $\cF$ is taut.

We will consider that $M$ is orientable and $\cF$ transversely
orientable. The only difference in the non-orientable case is that in the 
JSJ decomposition we also have to consider Klein bottles.
These Klein bottles lift to embedded tori in some cover
of $M$. Then all the results follow with the same proofs.

\subsection{The trace of JSJ tori in the universal circle}

Let $M_1, \ldots M_k$ be the pieces of its JSJ decomposition. 
Let $T$ be a torus of the JSJ decomposition. In this section we show Proposition \ref{prop-tracetori} which states that one can associate to each lift of a torus of the JSJ decomposition some points in the universal circle. 

We first need the following lemma that puts (after isotopy) the JSJ tori in general position.

\begin{lemma}{\label{lem-goodposition}}
Any lift $\widetilde T$ to $\widetilde M$ intersects
every leaf of $\widetilde \cF$.
In addition one can isotope $T$ so that $\widetilde T$ intersects
every leaf of $\widetilde \cF$ in a single component,
and so that the foliation induced by $\cF$ in $T$ has
no Reeb components.
\end{lemma}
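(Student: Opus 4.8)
The plan is to combine the Roussarie--Gabai theorem (Theorem \ref{teo-transversetori}) with the $\R$-covered hypothesis. First, since $\cF$ is taut (as argued just before this subsection: Gromov hyperbolic leaves forbid torus/Klein bottle leaves, hence no dead-end components) and $T$ is an incompressible torus (or Klein bottle) in the JSJ collection, Theorem \ref{teo-transversetori} lets us isotope $T$ to be either a leaf of $\cF$ or transverse to $\cF$. The first case cannot happen because a leaf of $\cF$ is Gromov hyperbolic and $T$ is a torus or Klein bottle, so we may assume $T$ is transverse to $\cF$. Then $\cF$ induces a nonsingular foliation on $T$; passing to a further isotopy (standard for nonsingular foliations of the torus/Klein bottle, e.g.\ using the Euler characteristic $0$ and Poincar\'e--Bendixson-type arguments, or Kneser's theorem) we may assume the induced foliation on $T$ has no Reeb components. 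Lifting, $\widetilde T$ is a properly embedded plane transverse to $\widetilde\cF$.

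For the statement that $\widetilde T$ meets every leaf of $\widetilde\cF$: since $\widetilde T$ is transverse to $\widetilde\cF$ it meets an open set of leaves in the leaf space $\cL_\cF \cong \R$, so it suffices to show the set of leaves met by $\widetilde T$ is also closed. If $L_n \to L$ in $\cL_\cF$ with each $L_n$ meeting $\widetilde T$, pick $x_n \in L_n \cap \widetilde T$; the key point is that $\widetilde T$ is the lift of a \emph{compact} torus $T \subset M$, so after applying deck transformations $\gamma_n$ we may assume $\gamma_n x_n \to x_\infty$, and $x_\infty$ lies on $\gamma_n \widetilde T$-translates accumulating on a plane transverse to $\widetilde\cF$ through $L$; a local-chart argument (the induced foliation on $\widetilde T$ being nonsingular, hence locally a product near $x_\infty$) forces $L$ itself to meet some translate of $\widetilde T$, and since all translates of $\widetilde T$ have the same leaf-image behavior (equivalently: arguing directly that the image of $\widetilde T$ in $\cL_\cF \cong \R$ is an interval that must be all of $\R$ by $\pi_1(T) = \Z^2$-invariance and properness) we conclude $\widetilde T$ itself meets $L$. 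Alternatively, and more cleanly, one observes that the image of $\widetilde T$ in $\R = \cL_\cF$ is open, connected, and $\Gamma_T$-invariant where $\Gamma_T \cong \Z^2$ acts on $\R$ by the holonomy of $\cF$ transverse to $T$; but $\Z^2$ cannot act on $\R$ with a bounded orbit unless it fixes a point, and a fixed point would give a $T$-invariant leaf, i.e.\ a compact leaf homeomorphic to $T$, contradicting Gromov hyperbolicity of leaves. Hence the image is all of $\R$.

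For the single-component statement: we must upgrade the isotopy so that $\widetilde T \cap L$ is connected for every leaf $L$ of $\widetilde\cF$. The induced foliation $\cG = \cF|_T$ on $T$ is nonsingular with no Reeb components, hence (by the classification of such foliations on $T^2$) it is either a foliation by circles or a suspension of a circle diffeomorphism — in either case its lift $\widetilde\cG$ to the plane $\widetilde T$ has all leaves noncompact and separating, and crucially the leaf space of $\widetilde\cG$ embeds order-preservingly into $\cL_{\widetilde\cF} \cong \R$ via $L \mapsto (\text{leaf of }\widetilde\cF\text{ through that leaf of }\widetilde\cG)$. That this map is injective — equivalently, each leaf $L$ of $\widetilde\cF$ meets $\widetilde T$ in a single leaf of $\widetilde\cG$, hence in a single connected component — is the heart of the matter; when it fails, two distinct components of $\widetilde T \cap L$ bound a region in $\widetilde T$ and a region in $L$, and this configuration can be removed by an isotopy of $T$ in $M$ supported near the corresponding product region (a standard innermost-component / irreducibility argument, using that $\widetilde M \cong \R^3$). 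I expect this last step — carrying out the innermost-disk-type simplification equivariantly and checking it genuinely reduces the complexity of $\widetilde T \cap \widetilde\cF$ while keeping $T$ incompressible and transverse — to be the main technical obstacle; the existence of the correct ambient isotopy of $M$ is intuitively clear but requires care about properness and about not reintroducing Reeb components in $\cF|_T$.
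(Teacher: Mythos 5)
Your argument that $\widetilde T$ meets every leaf is fine and is essentially the paper's (the image of $\widetilde T$ in $\cL_\cF\cong\R$ is a connected, $\Z^2$-invariant interval; a finite endpoint would be a leaf whose stabilizer contains $\Z^2$, hence a toral or Klein bottle leaf, impossible for Gromov hyperbolic leaves). The two remaining parts, however, contain genuine gaps, and the first of them hides precisely the main content of the lemma. Eliminating Reeb components of the induced foliation $\cG=\cF|_T$ is \emph{not} a standard intrinsic normalization: Kneser/Poincar\'e--Bendixson type results classify foliations of $\TT^2$ abstractly, but $\cG$ is determined by the position of $T$ relative to $\cF$, so the only way to change it is by an ambient isotopy of $T$ in $M$, and whether such an isotopy can kill a Reeb annulus is exactly where the $\R$-covered hypothesis is used. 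In the paper's proof, the two boundary curves of a Reeb annulus $A$ lift to curves lying in leaves of $\widetilde\cF$ that are non-separated from each other, hence (since $\cF$ is $\R$-covered) in a single leaf $L$; they are a bounded distance apart in $\mt$, hence in $L$ by the quasi-isometry property of $\R$-covered foliations \cite{FenleyQI}; this yields an annulus $B\subset\pi(L)$ with $\partial B=\partial A$, the torus $A\cup B$ is compressible and bounds a solid torus by irreducibility, and pushing $A$ across that solid torus removes the Reeb annulus (cf.\ \cite[Theorem 5.3.13]{CalegariPA}). None of this is supplied, or even hinted at, by the tools you invoke, and without the $\R$-covered hypothesis the statement is not a formal consequence of transversality.

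The second gap is the single-component statement, which you defer to an equivariant innermost-component isotopy that you explicitly leave as "the main technical obstacle." No such further isotopy is needed: once $\cG$ has no Reeb annuli it is uniformly equivalent to a linear foliation of the torus, so any two leaves of $\widetilde\cG$ in the plane $\widetilde T$ are joined by an arc transverse to $\widetilde\cG$, hence transverse to $\widetilde\cF$; since a transversal to $\widetilde\cF$ meets each leaf of $\widetilde\cF$ at most once (the foliation is Reebless), a leaf $L$ cannot contain two distinct leaves of $\widetilde\cG$, i.e.\ $\widetilde T\cap L$ is connected. So your outline both asserts as routine the step that genuinely requires the $\R$-covered machinery and leaves unproved a step that in fact follows immediately once the first one is done; as written, the proposal does not constitute a proof.
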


\begin{proof}
Let $G = \Z^2$ be the isotropy group of $\widetilde T$. 
The set of $\widetilde \cF$ leaves intersected by
$\widetilde T$ is connected. If this set is not the whole
leaf space, it is a non trivial interval in the leaf space.
Let $F$ be an endpoint. Since the leaf space is homeomorphic
to $\R$, it follows that $G$ preserves $F$. So $\pi_1(\pi(F))$
has a $\Z^2$ subgroup and the projection 
$\pi(F)$ is therefore a torus or Klein bottle.
This contradicts that the leaves of $\cF$ are Gromov hyperbolic.

Since $\cF$ is taut, by Theorem \ref{teo-transversetori} we
can isotope $T$ to be either a leaf of $\cF$ or transverse
to $\cF$. The first option is disallowed because of Gromov
hyperbolic leaves.
Hence assume that $T$ is transverse to $\cF$, let $\cG$ be
the induced foliation in $T$.

\begin{claim}
It is possible to isotope $T$ so that $\cG$ has no
{\em Reeb annuli}.
\end{claim}

\begin{proof} 
A Reeb annulus 
is a foliation of the annulus  so that boundaries 
are leaves, all other leaves spiral toward the boundary
leaves, and there is no transversal arc intersecting both
boundary leaves.
Suppose that $\cG$ has a Reeb annulus $A$.
The two boundary leaves of $A$ lift to curves
in $\widetilde M$, contained in leaves of $\widetilde \cF$
which are non separated from each other. This is because
of the Reeb annulus, so in $\widetilde A$ the boundary
infinite lines are non separated from each other.
Since the foliation is $\R$-covered, the two leaves
of $\widetilde \cF$ containing these infinite lines 
$\alpha, \beta$ are
the same leaf $L$.
Since $\pi(\alpha), \pi(\beta)$ are freely homotopic
in $T$, then $\alpha, \beta$ are a bounded distance from
each other in $\widetilde M$. 
We now use a fact of $\R$-covered foliations: for any 
$a_0 > 0$, there is $a_1 > 0$, so that if two points $x, y$
in a leaf $F$ of $\widetilde  \cF$ are less that $a_0$ in
$\widetilde M$, then they are less than $a_1$ in $L$ (see
\cite[Proposition 2.1]{FenleyQI}).

This holds only for $\R$-covered foliations.
Hence $\alpha, \beta$ are a bounded distance from each
other in $L$. It now follows that $\pi(\alpha), \pi(\beta)$
are isotopic closed curves in $\pi(L)$ and bound 
an annulus $B$ in $\pi(L)$. The interior of $B$ cannot intersect
$A$, because any interior leaf of $\cG$ in $A$ limits to
the boundary of $A$, and $A, B$ are transverse to each other.
Hence $A \cup B$ is
a torus. This torus is not $\pi_1$ injective because
one can produce an essential arc across $A$ together with
one across $B$ to yield a closed curve which is null homotopic.
One can easily see this as $\widetilde B$ is contained
in the fixed leaf $L$, and $\widetilde A$ has both
boundaries in $L$. Hence $A \cup B$ is compressible and there
is a compressing disk $D$ intersecting $A \cup B$ only in the boundary.
Cutting $A \cup B$ along $D$, produces a sphere. Since
$M$ is irreducible, this sphere bounds a ball. Gluing
back together one sees that $A \cup B$ bounds a solid torus.

What we proved is that $B$ is isotopic to $A$ in $M$. So 
then one can isotope $A$ across the solid torus to the
other side of $B$ and eliminate this Reeb annulus in $\cG$.
Doing this finitely many times eliminates all Reeb annuli in
$\cG$. 
This proves the claim.
See also \cite[Theorem 5.3.13]{CalegariPA} for a similar statement. 
\end{proof}

\vskip .1in
Since there are no Reeb annuli in $\cG$, it follows that 
$\cF$ intersects $T$ in a foliation uniformly equivalent\footnote{By this we mean that in the universal cover each leaf is bounded Hausdorff distance from a leaf of the linear foliation. See \cite[Definition 2.1]{Thurston} for a general definition of being uniformly equivalent.} to a linear foliation of the two dimensional torus. 
In particular any two leaves of $\widetilde \cG$ are connected by a 
tranversal to $\widetilde \cG$, hence a transversal to
$\widetilde \cF$ as well. It follows that any leaf $F$ of $\widetilde
\cF$ intersects $\widetilde T$ in a single component.

This finishes the proof of the lemma.
\end{proof}

\begin{remark}\label{rem-QItori}
The reason we choose the definition of non-trivial JSJ decomposition is to exclude $Sol$ and $Nil$ geometries for which some of the arguments do not
work.  These cases are not problematic to us and can be dealt with separately, and in a different way. A good thing about manifolds with non-trivial JSJ decomposition under our definition is that the tori of the decompositons are quasi-isometrically embedded: the map between the universal covers is a quasi-isometric embedding.
This follows from \cite[Theorem 1.1]{KL} 
(see also \cite[Section 3.1]{Ng}).  
In particular when 
 lifted to $\mt$, every quasigeodesic 
in the lift of the 
torus lifts maps to a quasigeodesic in $\mt$. 
\end{remark}

Let $T$ be a torus of the JSJ decomposition, put in good
position as in Lemma \ref{lem-goodposition}.
Let $\cG$ be the induced foliation by $\cF$ in $T$.
Given $L$ leaf of $\widetilde \cF$, and a lift $\widetilde T$ of $T$, then by Remark \ref{rem-QItori},
the curve $L \cap \widetilde
T$ is a quasigeodesic of $\mt$. It is also a leaf of
$\widetilde \cG$. 
Since it is a quasigeodesic in $\mt$, then it is necessarily
also a quasigeodesic in $L$, with ideal points $a_L(\widetilde T), b_L(\widetilde T)$
in $S^1(L)$. Orient the foliation $\cG$ so that
$b_L(\wt{T})$ corresponds to the forward direction in $\cG$.
Varying the leaf, produces corresponding ideal points
$a_F(\wt{T}), b_F(\wt{T})$ in $S^1(F)$ for any
$F$ leaf of $\widetilde \cF$. 

\begin{proposition}\label{prop-tracetori}
The collection $\{ b_F(\wt{T}) \}$ as $F$ varies over
leaves of $\widetilde \cF$ is a leaf of the vertical
foliation in the cylinder at infinity $\cA$. Equivalently, the point $\{b_F(\wt{T})\}$ is well defined in $\suniv$ and independent of the leaf $F$. 
\end{proposition}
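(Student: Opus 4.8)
The plan is to show that the family of ideal points $\{b_F(\wt T)\}$, as $F$ ranges over the leaves of $\widetilde\cF$, fits together into a leaf of the vertical foliation of the cylinder at infinity $\cA$. Recall from \S\ref{sss.uniform} that the vertical foliation is built out of the identifications $\tau_{L,F}$, and that $x\in S^1(L)\sim y\in S^1(F)$ precisely when a quasigeodesic in $L$ with ideal point $x$ is a finite Hausdorff distance (in $\mt$) from a quasigeodesic in $F$ with ideal point $y$. So it suffices to produce, for each pair of leaves $F_1,F_2$ of $\widetilde\cF$, quasigeodesics $\alpha_i\subset F_i$ with ideal point $b_{F_i}(\wt T)$ which are a bounded Hausdorff distance apart in $\mt$; the natural candidates are $\alpha_i = F_i\cap\wt T$, which by Lemma~\ref{lem-goodposition} are single arcs, are leaves of the induced foliation $\widetilde\cG$ on $\wt T$, and by Remark~\ref{rem-QItori} are quasigeodesics of $\mt$ (hence of $F_i$).

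\textbf{Key steps, in order.} First I would record that $\wt T$ is quasi-isometrically embedded in $\mt$ (Remark~\ref{rem-QItori}) and carries the induced foliation $\widetilde\cG$, which is uniformly equivalent to a (lift of a) linear foliation of $\TT^2$ with no Reeb annuli (Lemma~\ref{lem-goodposition}); in particular any two leaves of $\widetilde\cG$ are a bounded Hausdorff distance apart inside $\wt T$, with the bound uniform over all pairs of leaves of $\widetilde\cG$, since $\widetilde\cG$ is (uniformly equivalent to) a foliation of $\RR^2$ by parallel lines. Second, because $\wt T$ is quasi-isometrically embedded in $\mt$, this bounded Hausdorff distance inside $\wt T$ becomes a bounded Hausdorff distance in $\mt$; so $\alpha_1 = F_1\cap\wt T$ and $\alpha_2 = F_2\cap\wt T$ are a bounded Hausdorff distance apart in $\mt$. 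Third, each $\alpha_i$ is a quasigeodesic of $\mt$, hence — being contained in the leaf $F_i$, which is itself quasi-isometrically embedded as a leaf of an $\RR$-covered foliation (cf. \cite[Proposition 2.1]{FenleyQI}) — a quasigeodesic of $F_i$ with a well-defined ideal point, namely $b_{F_i}(\wt T)$ (after the chosen orientation of $\cG$). Fourth, by the characterization of $\sim$ recalled above, $b_{F_1}(\wt T)\sim b_{F_2}(\wt T)$, so these points all lie on one leaf of the vertical foliation of $\cA$; since a vertical leaf meets each circle $S^1(F)$ exactly once, $\{b_F(\wt T)\}$ descends to a single well-defined point of $\suniv=\cA/\!\sim$, independent of $F$. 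Finally I would note continuity: as $F$ varies continuously in the leaf space, $F\cap\wt T$ varies continuously and so does its ideal point, which is what makes $\{b_F(\wt T)\}$ an honest (continuous) section, i.e. a leaf of the vertical foliation rather than merely a set of mutually $\sim$-equivalent points.

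\textbf{Where the difficulty lies.} The genuinely substantive point is the second step: transferring ``bounded Hausdorff distance inside $\wt T$'' to ``bounded Hausdorff distance in $\mt$''. This is exactly where the non-triviality of the JSJ decomposition is used — it guarantees, via \cite[Theorem 1.1]{KL}, that $\wt T\hookrightarrow\mt$ is a quasi-isometric embedding, which fails for $Sol$ and $Nil$ (hence the exclusions in \S\ref{ss.3mfds}). Everything else is bookkeeping with the definitions of \S\ref{sss.uniform}: one must be a little careful that the quasigeodesic constants of the $\alpha_i$ in $\mt$, and hence in the $F_i$, are uniform, but this follows since $\widetilde\cG$ has only finitely many ``slopes'' of leaves up to the $\ZZ^2$-action and the quasi-isometry constant of $\wt T\hookrightarrow\mt$ is fixed. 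I would also make explicit that the orientation of $\cG$ chosen so that $b_F(\wt T)$ is the ``forward'' ideal point is consistent across leaves $F$, which is automatic because $\wt T$ meets each $F$ in a single connected arc and the transverse orientation of $\cG$ is globally defined on $\wt T$.
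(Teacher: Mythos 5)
Your argument is essentially the paper's proof of the \emph{uniform} case of Proposition~\ref{prop-tracetori}, and for that case it is correct: the curves $F\cap\wt T$ are leaves of $\widetilde\cG$, hence at finite Hausdorff distance from each other in $\wt T$ (only finiteness is needed; your parenthetical claim that the bound is uniform over all pairs of leaves of $\widetilde\cG$ is false, since ``parallel'' leaves can be arbitrarily far apart, but this is harmless), they are quasigeodesics in $\mt$ by Remark~\ref{rem-QItori} and hence in their leaves by the $\R$-covered property \cite[Proposition 2.1]{FenleyQI}, and the coherent orientation of $\cG$ matches the forward ideal points. (Small misattribution: passing from finite Hausdorff distance in $\wt T$ to finite Hausdorff distance in $\mt$ is trivial since the inclusion is distance non-increasing; the quasi-isometric embedding of \cite{KL} is what makes the curves quasigeodesics of $\mt$, which is where the non-trivial JSJ hypothesis enters.)

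The genuine gap is that the proposition, and all of Section~\ref{s.UnivJSJ}, is stated for $\R$-covered foliations with Gromov hyperbolic leaves that are \emph{not assumed uniform}, and your key step~--~``by the characterization of $\sim$ recalled above, $b_{F_1}(\wt T)\sim b_{F_2}(\wt T)$''~--~only makes sense in the uniform case. The characterization of $\sim$ via quasigeodesics at finite Hausdorff distance is established in \S\ref{sss.uniform} using the maps $\tau_{L,F}$, which exist only because any two leaves are at finite Hausdorff distance; for a non-uniform $\R$-covered foliation those maps do not exist and the vertical foliation of $\cA$ is defined by an entirely different mechanism (contracting directions, \S\ref{sss.notuniform}), so there is no stated criterion identifying ``quasigeodesics at finite Hausdorff distance in $\mt$'' with ``same vertical leaf.'' The paper handles this case by a separate argument: reduce to $\cF$ minimal, show the map $F\mapsto b_F(\wt T)$ is continuous into $\cA$ (your closing continuity remark is the right start, but continuity alone does not identify the curve $\{b_F(\wt T)\}$ with a leaf of the vertical foliation), and then use the density of contracting directions between any two leaves $L,F$ to find an intermediate leaf $E$ whose ray $\beta_E\subset E\cap\wt T$ is asymptotic to $\beta_L$ and to the corresponding ray in $F$, whence $b_E$ lies on the vertical leaf through $b_L$ and through $b_F$, and one concludes by the composition property. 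Without some version of this (or a proof that in the non-uniform case the vertical foliation is still detected by finite Hausdorff distance of quasigeodesics), your proof covers only uniform foliations.
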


\begin{proof} We will fix a lift $\wt{T}$ of some torus $T$ of the JSJ decomposition. So, we will not include the reference to $\wt{T}$ in the notation. 

Suppose first that $\cF$ is uniform.
Let 
$\alpha_L$ be the intersection of $L$ and $\widetilde T$, that
is a leaf of $\widetilde \cG$.
For any $L, F$ leaves of $\widetilde \cF$, 
the curves $\alpha_L, \alpha_F$ are a bounded distance
from each other in $\widetilde T$ $-$ since there are
no Reeb annuli in $\cG$. It follows that $\alpha_L, \alpha_F$ are
a bounded distance from each other in $\mt$.
By the remark above, $\alpha_L$ is a quasigeodesic in $L$, hence, the ray
$\beta_L$  defining $b_L$ is a bounded distance
in $L$ from a geodesic ray in $L$. Since $\cF$ is uniform,
this ray in $L$ is a bounded distance from a geodesic ray
in $F$ defining $\tau_{L,F}(b_L)$. But $\beta_L$ is a bounded
distance from a corresponding ray
$\beta_F$ of $\alpha_F$ (same direction given by the foliation $\cG$).
This is bounded distance in $\mt$. Hence $\beta_F$ is a bounded
distance in $\mt$ from the geodesic ray defining $\tau_{L,F}(b_L)$.
Since $\cF$ is $\R$-covered, this again implies that 
$\beta_F$ is a bounded distance from this geodesic ray in $F$.
In particular the ideal point of $\beta_F$ is $\tau_{L,F}(b_L)$.
But by definition the ideal point of $\beta_F$ is $b_F$.
Hence $b_F = \tau_{L,F}(b_L)$. This proves the
proposition in this case.

\vskip .1in
Suppose now that $\cF$ is not uniform.
By the description in \S \ref{sss.notuniform}
we can assume that $\cF$ is
minimal. Hence for any $L, F$ in $\widetilde \cF$ there is 
a dense set of directions in $S^1(L)$ which are asymptotic 
to $F$.

Fix a transversal $\tau$ to $\cG$  in $T$. Lift this to a transversal
$\tilde \tau$ in $\widetilde T$. For any $L$ intersecting
$\tilde T$, let $x_L = \tilde \tau \cap L$. Let 
$r_L$ be the geodesic ray in $L$ starting at $x_L$ and with
ideal point $b_L$. As $L$ varies the corresponding rays $\beta_L$ in
$\widetilde \cG$ are boundedly close to each other in
$\widetilde T$ and hence in $\widetilde M$. 
Hence the same happens
for the geodesic rays $r_L$ as $L$ varies. It follows
that the ideal points of $\beta_L$ vary continuously
with $L$. Hence the functions $a_L, b_L$ from the leaf
space into $\cA$ are continuous.

Suppose that for some $L, F$, then $\tau_{L,F}(b_L) \not = b_F$.
Since the set of contracting directions between $L$ and $F$
is dense in $S^1(L)$ and $b_E$ varies continuously with $E$,
it follows that there is some $E$ between $L, F$ so that
$b_E$ corresponds to a direction in $E$ which is contracting
with both $L$ and $F$. 
Hence the ray $\beta_E$ in $E \cap \widetilde T$ is asymptotic
to a curve in $L$. This implies that in $\widetilde T$, the curve
$\beta_E$ is asymptotic to a curve in $\widetilde T \cap L$.
But this can only be $\beta_L$ $-$ as $\widetilde T \cap L$
is a single curve and has a ray $\beta_L$ corresponding
to that direction. In particular this implies that 
$b_E = \tau_{L,E}(b_L)$. The same holds for the pair $E, F$.
By the composition property of the maps $\tau_{L,F}$, it now follows
that $\tau_{L,F}(b_L) = b_F$. 

This finishes the proof of the proposition.
\end{proof}

\subsection{JSJ universal circles}

Our setup has an $\R$-covered  foliation $\cF$ by leaves with curvature
very close to $-1$ in $M$ with non trivial JSJ decomposition.
If $T$ is a torus in the JSJ decomposition we use
Lemma \ref{lem-goodposition} and isotope $T$ 
to be transverse to $\cF$ and so that the induced foliation
in $T$ does not have any Reeb annuli. 

Recall that in Proposition \ref{p.JSJtree} we introduced
the JSJ tree $\cT$ of $M$.
Let $T_1, ..., T_k$ be the tori in the JSJ decomposition.
The fundamental group of $M$ naturally acts on the tree $\cT$.
The tree $\cT$ is infinite and in general not locally compact:
there are infinitely many edges adjoining any given vertex.
We observe that if $M$ has a trivial JSJ decomposition,
that is, $M$ is either Seifert or atoroidal, then the object
constructed above would be a single point. 
We now consider the case that $M$ has an $\R$-covered foliation.

Let $W = \pi^{-1}(T_1 \cup \ldots \cup T_k)$. In other
words a component of $W$ is an arbitrary lift 
$\widetilde T$ of one of the JSJ tori.

\begin{lemma}{}\label{l.JSJinleaves}
Suppose that $M$ has a non-trivial JSJ decomposition and $\cF$ is an $\R$-covered foliation by leaves with curvature very close to $-1$.

Then the JSJ tree $\cT$ has an
embedding into the plane well defined up to isotopy. This
determines a well defined circular ordering 
on the set of ends of $\cT$.
A deck transformation either preserves the circular ordering,
or reverses the circular ordering on the set of ends.
\end{lemma}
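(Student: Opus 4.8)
The plan is to realize the JSJ tree $\cT$ inside a single leaf $L$ of $\widetilde\cF$, using the trace curves of the lifted JSJ tori in that leaf. Recall from Lemma \ref{lem-goodposition} that each lift $\widetilde T$ of a JSJ torus intersects $L$ in a single quasigeodesic, hence a properly embedded line in $L\cong\DD^2$ with two distinct ideal points $a_L(\widetilde T), b_L(\widetilde T)$ on $S^1(L)$. First I would check that these lines are pairwise disjoint (since distinct lifts of JSJ tori are disjoint planes in $\widetilde M$, their intersections with $L$ are disjoint in $L$), and that each such line separates $L$ into exactly two half-planes, matching the fact that $\widetilde T$ separates $\widetilde M$ and separates the set of lifts of JSJ pieces; this exactly reproduces the combinatorics of $\cT$. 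Thus the collection of trace lines $\{L\cap\widetilde T\}$ is a collection of disjoint properly embedded lines in $\DD^2$ whose "dual tree" is precisely $\cT$ — vertices are complementary regions (corresponding to lifts $\widetilde M_i^j$, since $\widetilde M_i^j\cap L$ is connected and non-empty, which one verifies the same way as in Lemma \ref{lem-goodposition}), and edges are the lines.

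The key point is then purely topological: a locally finite-or-not collection of disjoint properly embedded lines in the open disk $\DD^2$, together with its dual graph being a tree $\cT$, induces a canonical cyclic order on the ends of $\cT$ coming from the circular order of the ideal endpoints on $S^1(L)=\partial\DD^2$. Concretely, each end $\xi$ of $\cT$ determines a nested sequence of complementary regions; the intersection of the closures of these regions with $S^1(L)$ is a nested sequence of closed arcs, whose intersection is a single point or a closed arc of $S^1(L)$ — in either case, choosing (say) the "lower" endpoint gives a well-defined point $\iota(\xi)\in S^1(L)$, and one shows $\iota$ is injective and order-preserving onto its image, so the circular order on $S^1(L)$ pulls back to a circular order on the ends of $\cT$. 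The nestedness of complementary regions corresponds to the nestedness of the arcs, which is what makes this well defined; I would spell this out as a short lemma about disjoint lines in the disk. That this circular order does not depend on the chosen leaf $L$ follows from Proposition \ref{prop-tracetori}: the maps $\tau_{L,F}$ carry $b_L(\widetilde T)$ to $b_F(\widetilde T)$ and are homeomorphisms of the boundary circles, hence preserve circular order, so the combinatorial order on ends of $\cT$ read off from $L$ agrees with the one read off from $F$. Similarly, one checks that isotoping the $T_i$ (as allowed by Lemma \ref{lem-goodposition}) does not change the trace lines up to proper isotopy in $L$, so the embedding of $\cT$ in the plane is well defined up to isotopy.

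For the equivariance statement: a deck transformation $\gamma$ permutes the lifts of the JSJ tori, hence acts on $\cT$ by a tree automorphism, and it acts on $\cA$ (and on each $S^1(L)$, via $\rho(\gamma)=\tau_{\gamma L,L}\circ\gamma$ as in Remark \ref{rem-action}) by homeomorphisms. Since $\gamma$ sends the trace curve $L\cap\widetilde T$ to $\gamma L\cap\gamma\widetilde T$, and $\tau_{\gamma L,L}$ intertwines their ideal points, the induced action of $\gamma$ on the ends of $\cT$ matches the action of the circle homeomorphism $\rho(\gamma)$ on the corresponding points $\iota(\xi)\in S^1(L)$. A homeomorphism of $S^1$ is either orientation-preserving or orientation-reversing, and in the former case it preserves the cyclic order of any subset while in the latter it reverses it; transporting this through $\iota$ gives the claim that $\gamma$ either preserves or reverses the circular order on the ends of $\cT$.

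The main obstacle I anticipate is the non-local-compactness of $\cT$: infinitely many edges at a vertex means the complementary regions of the trace lines in $L$ can accumulate, so the "nested arcs" argument must handle the possibility that an end of $\cT$ corresponds to a degenerate (single-point) limit of arcs, and that distinct ends can have ideal arcs sharing an endpoint. This requires being careful that $\iota$ is well-defined and injective on ends — one needs to know that two different ends of $\cT$ are separated by some trace line, hence their arcs have disjoint interiors, so picking a consistent endpoint (lower endpoint in the circular order, with a fixed basepoint) resolves the tie; and one needs that the trace lines are genuinely proper lines in $L$ with two distinct endpoints, which is exactly where the quasi-isometric embedding of the JSJ tori (Remark \ref{rem-QItori}) and the $\R$-covered hypothesis enter. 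Making the "dual tree = $\cT$" identification precise in this non-locally-finite setting, rather than the topology of disks-and-lines, is where the real care is needed.
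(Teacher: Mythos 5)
Your first paragraph is essentially the paper's construction: the trace lines $L\cap\widetilde T$ together with the components $\widetilde M_i^j\cap L$ give exactly the embedding of $\cT$ into a leaf (the paper picks a point $p_V$ in each region and arcs crossing each trace line once), and your arguments for leaf-independence and for equivariance are in the same spirit. The genuinely different step is how you produce the circular order on the set of ends: the paper reads it off \emph{combinatorially} from the planar embedding (equivalently, from the cyclic order of the edges at each vertex of $\cT_F$, using a tripod vertex to compare any three ends), and only later, in Proposition \ref{prop-JSJvsUniversalCircle}, relates ends to ideal points of $S^1(F)$; you instead try to \emph{define} the order by pulling it back through a map $\iota:\ \mathrm{Ends}(\cT)\to S^1(L)$ built from nested ideal arcs.

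That is where there is a genuine gap. For the pullback to define a circular order you need $\iota$ to be injective, and your argument for this (``two ends are separated by a trace line, hence their arcs have disjoint interiors, and a tie-break by the lower endpoint resolves the tie'') does not establish it: disjoint interiors still allow the two nested arc systems to shrink to the \emph{common} endpoint $a$ of the separating trace line, in which case both ends are assigned the same point $a$ and no endpoint-choosing convention distinguishes them. Nothing in the hypotheses rules this out --- the trace lines are only uniform quasigeodesics with a positive lower bound on their mutual distance, which is compatible with two of them sharing an ideal point; indeed the paper's own map from ends to $S^1(F)$ in Proposition \ref{prop-JSJvsUniversalCircle} is only shown to be weakly monotone, with possible identifications at gap boundaries, precisely because injectivity is neither needed nor claimed. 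With a non-injective $\iota$ your pulled-back relation is only a circular preorder, and the subsequent transport of ``orientation preserving or reversing'' through $\iota$ also breaks down. The fix is available inside your own setup: since you already have the dual tree embedded in the disk, define the circular order of any three ends by the cyclic order, around the branch vertex of the corresponding tripod, of the three complementary regions (equivalently of the three edges), which is exactly what the paper does and requires no map to $S^1(L)$ at all for this lemma.
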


\begin{proof}{}
The curvature condition implies that $\cF$ is Reebless.

Hence the leaves
of $\widetilde \cF$ are properly embedded planes in $\mt$.

First fix a leaf $F$ of $\widetilde \cF$.
Lemma \ref{lem-goodposition} shows that any lift $\widetilde T$ of
a JSJ torus intersects $F$ in a single component. This component
is a quasigeodesic in $F$.
For each vertex $y$ of $\cT$, associated to a component $V$ of
$\mt - W$, it has at least two edges adjoining
it, let $\widetilde T$ be one of them. Since $\widetilde T$ intersects $F$
transversely, then $V$ also intersects $F$. In addition since
any lift $\widetilde T'$ of a JSJ torus separates $\mt$, and
each such lift intersects $F$ in a single component, it also
follows that $V$ also intersects $F$ in a single component.
Choose a point $p_V$ in $V \cap F$ representing the vertex 
$y$ of $\cT$. It $\widetilde T$ is an edge of $\cT$ adjoining
components $V$, $Z$ of $\mt - W$, choose an embedded arc in $F$
connecting $p_V$ to $p_Z$, and intersecting $\widetilde T$
in a single point. This represents an embedding of
the edge $\widetilde T$ of $\cT$ into $F$. In this way
we construct an embedding of $\cT$ into $F$.
The choices of the points $p_V$ are well defined up to
isotopy in $V \cap F$. The choices of the embedded arcs are
also well defined up to isotopy.
Therefore the embedding of $\cT$ into $F$ is well defined up
to isotopy.
Fix one such embedding and call $\cT_F$ the image tree in $F$.

Now if $L$ is another leaf of $\widetilde \cF$, then the
same reasoning applies. Notice that if $V$, $Z$ components
of $\mt - W$ define and edge $\widetilde T$, then 
$V \cap L, Z \cap L$ are adjoining in $L$ along $\widetilde T
\cap L$ just as in $F$. In addition the circular ordering
around a vertex is also the same whether considering it
wrt to $F$ or to $L$. It follows that the embeddings of $\cT$
in $F$ and $L$ are isomorphic, preserving the circular ordering
at the corresponding vertices.

It follows that the embedding in the plane is well defined
up to isotopy.
This induces a circular ordering in the set of ends of
$\cT$. 

If $\gamma$ is a deck transformation, and $F$ a leaf of 
$\widetilde \cF$, then $\gamma$ also induces a homeomorphism
of the embedding of $\cT$ in $F$:
given $V$ components of $\mt - W$, then $\gamma(V)$ also
intersects $F$ in a single component, and likewise for
$\widetilde T$ component of $W$.
This produces the required homeomorphism of the 
tree $\cT$. In addition this homeomorphism is induced
by a homeomorphism between $F$ and $\gamma(F)$, which
can be either orientation preserving or reversing.
It follows that this homeomorphism
either preserves the circular ordering of the ends of $\cT$
or reverses it.
\end{proof}

\begin{remark}
We emphasize some facts proved in this lemma: if $V$ is a component
of $\mt - W$, and $F$ is a leaf of $\widetilde \cF$, then 
$V$ intersects $F$ and in a single component (cf. Lemma \ref{lem-goodposition}). Similarly if 
$\widetilde T$ is a component of $W$ then $\widetilde T$ intersects
$F$ in a single component. Therefore the trees $\cT$ and $\cT_F$ 
are canonically isomorphic. In particular if $F, L$ are
leaves of $\widetilde \cF$, then $\cT_F, \cT_L$ are canonically
isomorphic, with the circular order of the edges at any vertex
preserved by the isomorphism (see also Proposition \ref{prop-tracetori}). 
\end{remark}

We produced a set with a circular order and a group action
so that each group element either preserves the circular
order or reverses it.
Given these properties, a circle with an induced action 
can be created.
This procedure from set with circular order and group action
to action on a circle was developed 
by Calegari and Dunfield in
\cite{CD}. We refer to  \cite[Theorem 3.2]{CD} for specific
details. Here we will only briefly describe the construction of the
circle with the induced action.

Since the set of ends is cyclically ordered there is an
embedding of the set of ends into a circle preserving
the circular order. First take the closure of the image of the set of ends.
If the tree were locally finite (finitely many edges at any
vertex), then the set of ends would be order complete,
and the image is a closed subset of the circle..
The fundamental group still acts on the closure. 
There may be gaps in the image. 
Now collapse every closure
of a complementary interval (that is a gap) to a point, producing a 
circle $\tj$, called the 
{\em {JSJ universal circle}} of $\cF$. 
Deck transformations either preserve or reverse the circular
ordering so induce homeomorphisms of the circle that either
preverse or reverse orientation. 

\begin{remark}
The JSJ universal circle depends on the foliation
$\cF$: given a different $\R$-covered foliation $\cF_1$, 
it may induce a different circular ordering of the 
edges at a given vertex of the tree $\cT$.
This will produce a different circular order on the set of
ends of $\cT$ and hence a different JSJ universal circle.
The tree $\cT$ is the same and so are its ends. But the
the set of edges around a vertex in $\cT$ does not come
with a natural circular order. This is the information that
the $\R$-covered foliation is providing, because
it gives an embedding of the tree into the plane.
Different
$\R$-covered foliations may give different such 
circular orders.
\end{remark}

Let $T$ be a $\pi_1$-injective torus in $M$, put in 
good position as in Lemma \ref{lem-goodposition}.
Given $F$ leaf of $\widetilde \cF$, we define 
the 
lamination $\cG_F$ whose leaves are the intersections
of lifts $\widetilde T$ of $T$ with $F$.
In fact $\cG_F$ also depends on $T$, but for notational
simplicity we omit this dependence.

\begin{lemma}{}\label{denseidealpoints}
For each  $\pi_1$-injective torus $T$ of $M$ and for each
$F$ leaf of $\widetilde \cF$, then the set of ideal 
points of leaves of $\cG_F$ is dense in ${S}^1(F)$.
In addition for any non degenerate interval $J$ of
$S^1(F)$ there are leaves of $\cG_F$ with both
ideal points in $J$.
\end{lemma}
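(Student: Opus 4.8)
The plan is to fix a $\pi_1$-injective torus $T$ (in good position) and a leaf $F$ of $\widetilde\cF$, and to exploit the fact that the lifts $\widetilde T$ intersecting $F$ are arranged along the JSJ tree $\cT_F\cong\cT$ embedded in $F$. The key observation is that the tree $\cT$ has infinitely many edges at every vertex and that each component $V$ of $\mt-W$ meets $F$ in a single unbounded region; consequently, travelling along $F$ toward any boundary direction one passes through infinitely many translates $\gamma\widetilde T$, and their traces $\gamma\widetilde T\cap F$ are quasigeodesics of $F$ (by Remark~\ref{rem-QItori}, since JSJ tori are quasi-isometrically embedded) whose ideal points therefore form genuine points of $S^1(F)$. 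So $\cG_F$ is a lamination of $F$ by quasigeodesics, and the assertion is that its endpoint set is dense in $S^1(F)$ and even that arbitrarily small intervals contain both endpoints of a single leaf.

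First I would establish density of the endpoint set. Suppose some nondegenerate interval $J\subset S^1(F)$ contains no ideal point of any leaf of $\cG_F$. Then the complementary sector of $F$ subtended by $J$ is disjoint from every trace $\widetilde T\cap F$, hence lies entirely inside a single component $V$ of $\mt-W$ when restricted to $F$, i.e. $V\cap F$ contains a whole sector of $S^1(F)$ in its ideal boundary. Translating by the $\Z^2$ stabilizer of the bounding torus would then show the corresponding piece $M_i$ of the JSJ decomposition carries too large a subgroup, or directly: $V\cap F$ with a sector of ideal points forces the quasigeodesic $\widetilde T\cap F$ bounding it to have both endpoints outside $J$ on a definite arc, and iterating with the infinitely many edges at the vertex for $V$ (each giving a trace in $F$ whose endpoints are distinct, since the tree is embedded with infinitely many edges circularly ordered around the vertex) produces infinitely many endpoints accumulating, contradicting that $J$ is a gap. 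More cleanly: the union of the sectors cut off by leaves of $\cG_F$ covers $S^1(F)$ minus the limit set of the vertex set of $\cT_F$; because every vertex has infinitely many adjacent edges whose directions in $F$ are distinct points of $S^1(F)$, no single vertex contributes an interval, and since deck transformations act cocompactly on $\cT$ the endpoint set has no gaps.

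For the stronger statement — that any nondegenerate $J$ contains both endpoints of some leaf of $\cG_F$ — I would argue as follows: pick two distinct ideal points $\xi,\eta\in J$ that are endpoints of leaves $\ell_\xi,\ell_\eta$ of $\cG_F$ (possible by the density just proved, choosing them close enough together inside $J$). These two leaves are quasigeodesics; if they already share a complementary region, the trace of an appropriate $\gamma\widetilde T$ separating them, or one of $\ell_\xi,\ell_\eta$ itself, has both ideal points inside $J$ provided $\xi,\eta$ were chosen so that the \emph{other} endpoints of $\ell_\xi,\ell_\eta$ also lie in $J$ — which can be arranged because the traces, being quasigeodesics at bounded Hausdorff distance from a linear foliation of $\widetilde T$, have both endpoints varying continuously and densely, so one can select a leaf of $\cG_F$ entering and exiting the sector over $J$. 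The cleanest route: the sector $S_J$ of $F$ over $J$ is a non-compact region; it must meet some component $\widetilde T\cap F$ of the lamination (else $S_J\subset V\cap F$ for one component $V$, contradicting that $V\cap F$ has at most... here one uses that a single $V\cap F$ cannot contain a full sector, again from infinitely many edges at the vertex); and because $\widetilde T\cap F$ is a properly embedded line in $F$ separating it, a subarc of it crossing $S_J$ forces that line's two ideal points to lie in $\overline J$, and a small perturbation of $J$ puts them strictly inside.

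The main obstacle I expect is the second statement — forcing \emph{both} endpoints of one leaf into $J$ rather than merely getting a dense set of individual endpoints. The delicate point is ruling out the scenario where the part of $\cG_F$ meeting the sector $S_J$ consists only of "long" leaves that enter $S_J$ but have their other endpoint far away; to exclude it one must genuinely use that $F$ is $\delta$-hyperbolic and that the traces are uniform quasigeodesics, together with the combinatorial fact that the tree $\cT_F$ has a vertex whose link (infinitely many edges) has accumulation points densely in $S^1(F)$ — so arbitrarily deep in the sector there sits a vertex $V$ all of whose infinitely many bounding traces have both endpoints within the sub-sector over a slightly smaller interval, and picking any one of those finishes it. I would lean on Lemma~\ref{lem-goodposition} (single-component intersection, no Reeb annuli, hence linear induced foliation) and Remark~\ref{rem-QItori} (quasi-isometric embedding of the JSJ tori) as the technical inputs, and on Proposition~\ref{prop-tracetori} to know these ideal points assemble coherently across leaves.
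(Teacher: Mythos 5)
Your proposal has genuine gaps in both halves of the lemma, and at the decisive points it substitutes assertions for proof. For density, after reducing to a half plane of $F$ disjoint from all traces (hence contained in a single complementary region $V\cap F$), you claim this is impossible because the corresponding vertex of the tree has infinitely many adjacent edges (``no single vertex contributes an interval''). That inference is unjustified: infinitely many boundary traces do not prevent $V\cap F$ from containing a half plane, since those traces could a priori all accumulate on a proper subarc of $S^1(F)$; likewise the appeal to the $\Z^2$ stabilizer ``carrying too large a subgroup'' is not turned into any concrete contradiction. The paper closes this gap with a compactness argument your proposal never invokes: a half plane disjoint from $\cG_F$ contains disks $D_i$ of radius tending to infinity disjoint from all lifts of $T$; translating by deck transformations and passing to a limit yields a full leaf $L$ of $\widetilde \cF$ disjoint from every lift of $T$, contradicting Lemma \ref{lem-goodposition}. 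Note also that the lemma is stated for an arbitrary $\pi_1$-injective torus $T$, so arguments tied to the JSJ tree of Proposition \ref{p.JSJtree} (infinitely many edges per vertex, cocompactness of the action on $\cT$) are not even available in general, and Proposition \ref{prop-tracetori} plays no role here.

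For the second statement your key step is false as written: a trace $\widetilde T\cap F$ is only a quasigeodesic, so it can enter and leave the sector $S_J$ with both ideal points outside $\overline J$; crossing $S_J$ forces nothing about its ideal points. Your proposed repair, that arbitrarily deep in the sector there is a vertex all of whose bounding traces have both endpoints over a slightly smaller interval, is essentially the statement to be proved and is asserted without argument; you flag this as the delicate point but do not supply the missing ingredient. The paper's argument is different and is the idea your proposal lacks: assuming no leaf of $\cG_F$ has both ideal points in $J$, choose distinct points $x_i\to x$ inside $J$ and distinct leaves $c_i$ of $\cG_F$ with one ideal point near $x_i$; the other ideal points are uniformly bounded away from these, so the $c_i$, being quasigeodesics with uniform constants, subconverge to a single quasigeodesic, forcing distinct $c_i, c_j$ to come arbitrarily close to one another. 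This is impossible because distinct lifts of the compact embedded torus are separated by a definite distance $a_2>0$. Without this combination of uniform quasigeodesic constants and uniform separation of distinct lifts (or some substitute), the second assertion of the lemma remains unproved in your write-up.
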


\begin{proof}{}
Suppose the first property is not true, let $T$ be a $\pi_1$-injective torus and $F$ a leaf of $\widetilde{\cF}$ so that the set of ideal points of leaves of $\cG_F$ is not dense in $S^1(F)$.

Then there is a non trivial interval $I$
in $S^1(F)$ which is disjoint from the ideal points of 
of $\cG_F$. Since the curves in $\cG_F$ are uniform
quasigeodesics in $F$ they are a uniform bounded distance
from geodesics in $F$. Hence up to considering a subinterval,
it follows that $I$ bounds a half plane $P$ in $F$ which
is disjoint from $\cG_F$. Therefore there are disks $D_i$
with radius converging to infinity disjoint from $\cG_F$.
Up to taking subsequences and deck transformations $g_i$,
then $g_i(D_i)$ converges to a full leaf $L$ which
is disjoint from $\cG_L$. But this is impossible since
any lift $\widetilde T$ of $T$ intersects every leaf of 
$\widetilde \cF$. This proves the first property of the lemma.

Now suppose that $J$ is a non degenerate interval so that 
no leaf of $\cG_F$ has both ideal points in $J$.
Let $x$ be an interior point of $J$. Let $x_i$ a sequence
of distinct points in $J$ converging monotonically 
to $x$. There are leaves
$c_i$ of $\cG_F$ with an ideal point arbitrarily
close to $x_i$. Since the $x_i$ are distinct in $J$ we can choose
the $c_i$ to be distinct as well. The other endpoints of $c_i$
are not $J$, hence at least $a_1 > 0$ from the first endpoint
of $c_i$ which is arbitrarily close to $x$. Since the 
$c_i$ are uniform quasigeodesics, then up to subsequence we
may assume that $c_i$ converges to a quasigeodesic $c$.
But then different $c_i, c_j$ have points that
are arbitrarily close to  each other. This is a contradiction:
different tori in the JSJ decomposition are compact and
disjoint. This implies that there is a constant $a_2 > 0$,
so that 
if $C, C'$ are different lifts of JSJ tori, then points
$p \in C, p' \in C'$ satisfy that distance from 
$p$ to $p'$ is at least $a_2$.

This finishes the proof of the lemma.
\end{proof}

We can now prove the following proposition that gives a different way to think about the universal circle of a foliation in terms of the JSJ universal circle. 

\begin{proposition}\label{prop-JSJvsUniversalCircle} Suppose that $M$ has a non-trivial JSJ
decomposition and $\cF$ is an $\R$-covered foliation with
Gromov hyperbolic leaves. Then there is a canonical homeomorphism
between the universal circle $\suniv$ of $\cF$ and the JSJ universal
circle $\tj$ of $\cF$.
This homeomorphisms is equivariant under deck transformations.
\end{proposition}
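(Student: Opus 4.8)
Fix a leaf $F$ of $\widetilde\cF$. Throughout I would use the canonical identification $\suniv\cong S^1(F)$, and the fact from Proposition~\ref{prop-tracetori} that the ideal points $a_F(\widetilde T),b_F(\widetilde T)$ of the quasigeodesic $\widetilde T\cap F$ depend only on the lift $\widetilde T$ of a JSJ torus, so that they are genuine points of $\suniv$, independent of $F$. The starting point is to package the JSJ data in $F$: let $\cG_F$ be the lamination of $F$ whose leaves are all the curves $\widetilde T\cap F$, with $\widetilde T$ ranging over every lift of every JSJ torus. Its leaves are uniform quasigeodesics in $F$ (Remark~\ref{rem-QItori}), they are pairwise disjoint because distinct walls of $W=\pi^{-1}(T_1\cup\cdots\cup T_k)$ are disjoint in $\mt$, its complementary regions are exactly the sets $V\cap F$ for $V$ a component of $\mt\setminus W$ (Lemma~\ref{l.JSJinleaves} and the following remark), and its dual tree is canonically the JSJ tree $\cT$; by Lemma~\ref{denseidealpoints} the ideal points of its leaves are dense in $S^1(F)$, and every non-degenerate arc of $S^1(F)$ contains both endpoints of some leaf of $\cG_F$. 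The plan is then to construct an order-preserving map $\phi$ from the set of ends of $\cT$ into $S^1(F)$ and show it extends to the desired homeomorphism.

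The map $\phi$ would be defined as follows. Given an end $e$, pick a ray $V_0,V_1,V_2,\dots$ of $\cT$ representing it, with consecutive vertices joined by edges $f_1,f_2,\dots$ (each $f_i$ a lift of a JSJ torus, $f_i\cap F$ a leaf of $\cG_F$); let $P_i\subset F$ be the open half-plane cut off by $f_i\cap F$ on the side away from $V_0\cap F$, so $P_1\supseteq P_2\supseteq\cdots$, and let $I_i=\overline{\partial_\infty P_i}\subset S^1(F)$ be the closed ideal arc with endpoints $a_F(f_i),b_F(f_i)$; these are nested, and I would set $\phi(e)=\bigcap_i I_i$. The key step, and the one I expect to be the main obstacle, is to prove that $\phi(e)$ is always a single point. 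The argument I have in mind: if $\mathrm{int}\bigl(\bigcap_iI_i\bigr)$ were non-empty, a leaf $\ell=\widetilde T'\cap F$ of $\cG_F$ with an ideal point in this open arc could not be any $f_j$ (an endpoint of $I_j$ never lies in $\mathrm{int}(I_{j+1})$), hence $\ell$ is disjoint from every $f_i\cap F$; having an ideal point inside the open arc $\partial_\infty P_i$ and being disjoint from $\partial P_i=f_i\cap F$ forces $\ell\subset P_i$ for all $i$, i.e.\ the edge $\widetilde T'$ of $\cT$ is separated from $V_0$ by every one of the infinitely many distinct edges $f_1,f_2,\dots$, which is impossible in a tree. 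So $\mathrm{int}\bigl(\bigcap_iI_i\bigr)$ meets no ideal point of $\cG_F$, contradicting density. This is exactly where Lemma~\ref{denseidealpoints} is essential.

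Granting this, $\phi$ is order-preserving because the circular order on the ends of $\cT$ is, by Lemma~\ref{l.JSJinleaves}, the one induced by the embedding of $\cT$ into $F$: if two rays diverge at a vertex $V$ down distinct edges $f\neq f'$, the half-planes $P_f,P_{f'}$ are disjoint, so the two ends land in disjoint arcs in the correct cyclic position; this also yields injectivity. Density of the image is again Lemma~\ref{denseidealpoints}: any arc contains both endpoints of a leaf of $\cG_F$, and iterating inside shrinking half-planes builds a ray of $\cT$ whose half-planes contract to any prescribed point. An order-preserving injection of the ends into $S^1(F)$ with dense image extends uniquely to a continuous monotone surjection of the order-completion $\overline E$ onto $S^1(F)$, and it collapses a complementary interval of $\overline E$ precisely when its two endpoints have equal image, which by density of $\phi$ happens exactly for the gaps collapsed in forming $\tj$; so $\phi$ descends to a continuous bijection $\tj\to S^1(F)$, hence a homeomorphism. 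Identifying this collapsing with the one in the definition of $\tj$ is the second, more bookkeeping-type, point where I expect to have to be careful.

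It remains to see the homeomorphism is canonical and equivariant. It is independent of $F$ because, read in $\suniv$, the endpoints $a_F(f_i),b_F(f_i)$ of the arcs $I_i$ are the same points of $\suniv$ for every leaf (Proposition~\ref{prop-tracetori}), hence so are the arcs $I_i$ and so is $\phi(e)=\bigcap_iI_i$. For equivariance, a deck transformation $\gamma$ sends walls to walls and complementary regions to complementary regions, so it carries the ray and half-planes representing $e$ to those representing $\gamma e$, while on $S^1(F)=\suniv$ it acts exactly as the action on $\suniv$ described in Remark~\ref{rem-action}; thus $\phi(\gamma e)=\gamma\,\phi(e)$, and since $\gamma$ reverses the circular order on the ends of $\cT$ precisely when it reverses orientation on leaves, the two circle actions agree, orientation behaviour included. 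This gives the canonical, $\pi_1(M)$-equivariant homeomorphism $\suniv\cong\tj$.
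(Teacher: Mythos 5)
Your proposal is correct and follows essentially the same route as the paper's proof: fix a leaf $F$, send each end of the JSJ tree to an ideal point of $S^1(F)$ via the quasigeodesic intersections of the walls with $F$, use Lemma \ref{denseidealpoints} for density, collapse gaps to obtain the homeomorphism with $\tj$, and deduce canonicity and equivariance from Proposition \ref{prop-tracetori} and the identification of the trees $\cT_F$. The only real variation is cosmetic: you show the image of an end is a single point by a nested-ideal-arc plus density argument, whereas the paper uses the uniform separation of distinct walls to see the curves escape compact sets of $F$ (and, like the paper, the conclusion ultimately rests on the fiber analysis after collapsing gaps rather than on strict injectivity of the map on ends, since the closed arcs of adjacent half-planes may share an endpoint).
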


\begin{proof}{}
For simplicity fix a leaf $F$ of $\widetilde \cF$.
The universal circle of $\cF$ is canonically identified with
$S^1(F)$. The JSJ universal circle can be obtained from the
intersections with $F$.
What we will prove is that considering $F$, both of these
are canonically homeomorphic.

Let $\cT_F$ be the embedded tree in $F$ which is the homeomorphic
image of $\cT$. Fix a basepoint $p$ in $\cT_F$.
Let $\cB$ be the set of ends of $\cT_F$. Since $\cT_F$ is
a tree it is easy to see that each end is uniquely associated 
to an embedded ray in $\cT_F$ starting at $p$.
Let $e$ be an end in $\cB$ associated to a ray $\alpha$
in $\cT_F$, which is also an embedded ray in $F$.
Then $\alpha$ keeps intersecting lifts $C_i$ of
one of the JSJ tori, let $c_i = C_i \cap F$.
Recall that $c_i$ is a quasigeodesic with uniform constants,
so globally $a_0$ distant from a geodesic in $F$.
Any two lifts $C, C'$ of JSJ tori have a minimum separation
between them. Hence the corresponding points $C \cap F, C' \cap F$
also have a minimum separation between them. 
Therefore the geodesics associated to $c_i$ also escape
in $F$ and they define a unique ideal point in $S^1(F)$
which we call $f(e)$. 
This defines a map $f$ from the set of ends $\cB$ 
to $S^1(F)$. 

Given appropriate orientations on $S^1(F)$ and the circular
order on the set of ends of $\cT_F$, it follows that
the map $f$ preserves this circular order.
In particular as one goes around once in the circular order
of the ends of $\cT_F$, then one also goes around once
in $S^1(F)$.
By Lemma \ref{denseidealpoints}, for each non degenerate
interval $J$ in $S^1(F)$
there is a leaf $c$ of $\cL_F$ with both ideal points  in $J$.
Hence any end $e$ of $\cT_F$ which is associated with a path in
the tree $\cT_F$ which
crosses $c$ will have $f(e)$ in $J$. It follows that
the image of $f$ is dense in $S^1(F)$.

Recall the construction of the JSJ universal
circle $S^1_{JSJ}$ of $\cF$: we map the set of ends $\cB$ to a circle $S^1$ preserving
the circular order, take the closure and then collapse the
gaps. 

By the first step we can think of $\cB$ as a subset of $S^1$.
 Let $H$ be the closure in $S^1$ of the image
of $f$.
Since $f$ preserves circular order it induces a map $f_1$ from
$H$ into $S^1(F)$. This map is
weakly monotone.
Since the image of $\cB$ under $f$ is dense in $S^1(F)$
it follows that given the endpoints of a gap of $H$ they
have the same image in $S^1(F)$ under $f_1$.
This implies that $f_1$ induces a map $f_*$ from the JSJ
universal circle $\tj$ of $\cF$ to $S^1(F)$. 

Finally by the same reasoning 
if two points have the same image under $f_1$
then they have to be boundary points of a gap of $H$ in $S^1$.
This implies that $f_*$ 
 is a homeomorphism.

Any deck transformation $\gamma$ permutes the lifts of JSJ
tori and components of $\mt - W$. It sends infinite embedded
paths in the tree $\cT_F$ to infinite paths in
the tree $\cT_{\gamma(F)}$. The tree $\cT_{\gamma(F)}$ is
canonically homeomorphic to the tree $\cT_F$ and this
identification is compatible with the identifications
of $S^1(\gamma(F))$ and $S^1(F)$. It follows that the
homeomorphisms $f_*$ are equivariant.
This finishes the proof of the proposition.
\end{proof}

\begin{remark} Notice that in the case of a non trivial
JSJ decomposition, the construction of the JSJ universal
circle comes with an invariant lamination. 
For definition and properties of invariant laminations
associated to universal circles see \cite[Section 8.2]{Calegari}.
The invariant lamination is obtained from the
tori in the JSJ decomposition, their lifts to $\widetilde M$ and
their ideal points in the ideal circles.
\end{remark}

\subsection{Moving points in the universal circle}

The following property will be important for the proof of Theorem \ref{teo-Minimal}.

\begin{prop}\label{p.movingpoints}
If $\cF$ is a uniform $\R$-covered foliation by hyperbolic leaves and $\xi \in \suniv$ then there is $\gamma \in \pi_1(M)$ such that $\gamma(\xi) \neq \xi$. 
\end{prop}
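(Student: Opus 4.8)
The plan is to split into two cases according to whether $M$ has a non-trivial JSJ decomposition. Suppose first that $M$ does \emph{not} have a non-trivial JSJ decomposition; then $M$ is either atoroidal or Seifert (excluding the torus-bundle case, which is handled separately as indicated in Remark \ref{rem-QItori}). In the Seifert case, the uniform $\R$-covered foliation with hyperbolic leaves forces strong structure — indeed by the quoted results a Seifert-fibered $M$ carrying such a foliation has the foliation essentially coming from a circle bundle structure, and one can use the fiber subgroup or a pseudo-Anosov-type structure to move any point. In the atoroidal case, one uses that there is a transverse (pseudo-)Anosov flow regulating $\cF$ (as referenced via \cite{Thurston,CalegariPA,Fen2002}), and the action of $\pi_1(M)$ on $\suniv$ then has no global fixed point because a fixed point would produce an element fixing too much of the flow ideal structure, contradicting that $\pi_1(M)$ is non-elementary (it contains a free group by the hyperbolicity of leaves and $\R$-coveredness). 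Alternatively, one directly observes that if $\gamma(\xi)=\xi$ for all $\gamma$, then $\xi$ defines a $\pi_1(M)$-invariant point in $S^1(L)$ for a fixed leaf $L$; pushing this down, the corresponding direction would be holonomy-invariant in a way incompatible with the leaf being Gromov hyperbolic and $\pi_1(M)$ acting cocompactly on $\mt$.

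Now suppose $M$ has a non-trivial JSJ decomposition. This is the main case and where I expect to spend most effort. Here I would use Proposition \ref{prop-JSJvsUniversalCircle}, which gives a canonical equivariant homeomorphism $\suniv \cong \tj$, so it suffices to show $\pi_1(M)$ has no global fixed point on the JSJ universal circle $\tj$. Recall $\tj$ is built from the set of ends $\cB$ of the JSJ tree $\cT$ (with the circular order coming from the embedding of $\cT$ into leaves, via Lemma \ref{l.JSJinleaves}), by taking the closure and collapsing gaps. A point $\xi \in \tj$ is then represented either by an end of $\cT$, or by a gap of the closure of the image of $\cB$. By Proposition \ref{p.JSJtree}, every $\gamma \in \pi_1(M)$ has fixed-point set of diameter at most $2$ in $\cT$; in particular no $\gamma$ fixes all of $\cT$, and for a suitable $\gamma$ the action on $\cT$ is a hyperbolic isometry (north-south dynamics on $\cB$ with exactly two fixed ends). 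Given $\xi$, I would produce $\gamma$ moving $\xi$ as follows: the tree $\cT$ is infinite with infinitely many edges at each vertex, and $\pi_1(M)$ acts on it with the weak-proper-discontinuity / small-fixed-set property of Proposition \ref{p.JSJtree}; such actions are never fixed by the whole group on the circle of ends. Concretely, pick two distinct ends $e_1 \neq e_2$ of $\cT$ and hyperbolic elements $\gamma_1, \gamma_2$ whose axes have these as attracting/repelling ends; by combining them (ping-pong) one finds elements whose attracting ends can be made to avoid any prescribed point $\xi$, hence $\gamma(\xi) \neq \xi$ for a suitable such element. The one subtlety is when $\xi$ corresponds to a collapsed gap rather than an honest end, but then $\xi$ corresponds to an edge or vertex of $\cT$ separating two infinite subtrees, and a hyperbolic element whose axis crosses that separating edge moves the corresponding gap off itself.

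The main obstacle I anticipate is organizing the JSJ-tree case cleanly: one must be careful that the circular order on $\cB$ is genuinely non-trivial (i.e. $\cB$ is infinite and the $\pi_1(M)$-action on it is "large"), and that collapsing gaps does not accidentally collapse everything to a point fixed by the group. This is where Proposition \ref{p.JSJtree} is essential — the bound of $2$ on fixed-point diameter guarantees plenty of hyperbolic elements with distinct axes, so the action on $\cB$ (and hence on $\tj$) is minimal enough to have no global fixed point. I would also need to treat, as a degenerate sub-case, a single Seifert piece adjacent to itself along JSJ tori, but the center-of-$\pi_1$ argument at the end of the proof of Proposition \ref{p.JSJtree} rules out an element fixing too many walls, which is exactly what is needed. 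In all cases the conclusion is that the stabilizer of $\xi$ in $\pi_1(M)$ is a proper subgroup, i.e. some $\gamma$ with $\gamma(\xi)\neq\xi$ exists.
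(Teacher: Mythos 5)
Your overall skeleton (case split on the JSJ decomposition, and in the main case working on the JSJ tree via Proposition \ref{p.JSJtree} and Proposition \ref{prop-JSJvsUniversalCircle}) is the same as the paper's, but the proposal has genuine gaps in both halves. In the trivial-JSJ cases your arguments do not go through as stated. In the Seifert case the fiber subgroup acts \emph{trivially} on $\suniv$, so it cannot be used to move points; what is actually needed (and what the paper does) is that the action factors through the Fuchsian-type action of $\pi_1(M)/\langle\delta\rangle$ on $\partial\widetilde S$, whose point stabilizers are at most cyclic, so the stabilizer of $\xi$ is at most $\Z\oplus\Z$, which cannot be all of $\pi_1(M)$ for homological reasons. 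In the atoroidal case, ``$\pi_1(M)$ is non-elementary, hence no global fixed point'' is not a valid inference: non-elementary groups can perfectly well act on a circle with a global fixed point. The paper instead uses Thurston's continuous extension $F\cup S^1(F)\to \mt\cup S^2_\infty$ to push a would-be stabilizer of $\xi$ into the stabilizer of a point of $S^2_\infty$, which is at most cyclic. Your ``alternative'' argument (a holonomy-invariant direction being incompatible with Gromov hyperbolicity and cocompactness) is not a proof as written; the honest version of that route (the geodesic fan giving a flow which is then topologically Anosov, contradicting known results on $\R$-covered Anosov flows) requires substantial machinery, as the paper notes in a remark. The solvable/torus-bundle case is asserted to be ``handled separately'' but never handled; the paper disposes of it by the classification of foliations on such manifolds.

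In the non-trivial JSJ case, the key unaddressed point is that the map from ends of $\cT$ (or from gaps) to $\suniv\cong\tj$ need not be injective: distinct ends of the tree, and distinct gaps sharing an endpoint, can be identified under the collapsing in the construction of $\tj$, so showing that a hyperbolic tree element moves the end or the gap representing $\xi$ does \emph{not} by itself show it moves $\xi$ in the circle. This is exactly what the paper's separation argument is for: one fixes a leaf $F$, chooses a lift $\tild{M}^j_i$ whose trace $A_1$ in $F$ does not have $\xi_F$ as an ideal point, then a second lift $\tild{M}^k_i$ at tree distance greater than $3$ with $A_1$ separating $A_2$ from $\xi_F$ in $F\cup S^1(F)$, and takes a nontrivial $\gamma$ fixing $\tild{M}^k_i$; the diameter-$2$ bound of Proposition \ref{p.JSJtree} forces $\gamma(\tild{M}^j_i)\neq\tild{M}^j_i$, and the separation of ideal points (not just of ends of the abstract tree) yields $\gamma(\xi)\neq\xi$. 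Your ping-pong/north--south sketch also quietly assumes the existence of hyperbolic elements with axes in prescribed position and that their dynamics on the ends descends to $\tj$, neither of which is established; the paper's argument avoids both issues by working directly with the traces of the walls in a leaf and their ideal points (via Lemma \ref{lem-goodposition} and Lemma \ref{denseidealpoints}), needing only an elliptic element fixing a far-away vertex. So the proposal captures the right objects but is missing the step that converts combinatorial displacement in the tree into displacement of the point of the universal circle.
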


\begin{proof}
We first treat the case where the JSJ decomposition of $M$ is trivial. If $M$ is Seifert with hyperbolic base, the universal circle is identified with the boundary of the universal cover of the base. The base is 
a hyperbolic surface $S$, maybe with finitely
many orbifold singular points. If $\delta$ is a generator of the center of
$\pi_1(M)$ then $\pi_1(M)/<\delta>$ is isomorphic to a closed 
surface group $\pi_1(S)$ where $S$
may have finitely many orbifold (or cone) points and acts on the boundary
$\partial \widetilde S$. The stabilizer of each point in $\partial
\widetilde S$ is at most infinite cyclic. The deck transformation
$\delta$ acts by the identity on the universal circle of the foliation.
It now follows that the stabilizer of a point of the universal circle
is at most a $\Z \oplus \Z$ subgroup. 
By homological reasons $\Z \oplus \Z$ cannot be the 
fundamental group of an irreducible closed $3$-manifold \cite{Hempel}.
This finishes the proof in the Seifert case.

If $M$ is atoroidal then it is hyperbolic\footnote{This follows from Perelmans' geometrization theorem. We do not need the full force of geometrization here, it is enough to know that atoroidal manifolds have fundamental group which is Gromov hyperbolic \cite{GabaiKa}, see also \cite[Corollary 9.32]{Calegari}.}  and we then assume $\mt = \mathbb{H}^3$. In this case we
show that the stabilizer of $\xi$ is at most infinite cyclic.
Suppose that $\gamma$ is in the stabilizer of $\xi$. Let
$F$ be a leaf of $\widetilde \cF$ and $\xi_F$ be the ideal point
of $S^1(F)$ associated to $\xi$. Thurston \cite{Thurston} proved
that the embedding $F \rightarrow \mt$ extends to a continuous
map $F \cup S^1(F) \rightarrow \mt \cup S^2_{\infty}$ where $S^2_\infty$ is the boundary $\partial_{\infty} \mt = \partial_\infty \mathbb{H}^3$ (cf. \S~\ref{ss.boundaries}). 
Let $p$ be the image of $\xi_F$ under this extended map.
Let $\beta$ be a geodesic ray in $F$ with ideal point $\xi_F$.
Then $\gamma(\beta)$ is a geodesic ray in $\gamma(F)$.
Since $\gamma(\xi) = \xi$, and $\cF$ is uniform, it follows
that $\gamma(\beta)$ has a subray which is a bounded distance
from $\beta$. In $\mt \cup S^2_{\infty}$ the image of $\beta$
limits to $p$. Since $\gamma(\beta)$ has a subray a bounded
distance from a ray of $\beta$, it follows that $\gamma(p)$
is equal to $p$. Hence $\gamma$ is in the stabilizer of $p$.
But it is well known that the stabilizer of a point in $S^2_{\infty}$
is at most cyclic. This finishes the proof in the atoroidal case.

Foliations in manifolds with (virtually) solvable fundamental group are classified and cannot be uniform $\R$-covered with hyperbolic leaves (see \cite{Plante} or \cite[Appendix B]{HP} for the $C^0$-case).
In fact the result does not work for manifolds with (virtually) 
solvable fundamental group. 
So the remaining case to be analyzed in the proof
is is when $M$ has a non-trivial JSJ decomposition in our sense (which excludes being a torus bundle up to a finite cover). 

Now we consider the case that the JSJ decomposition of $M$
is not trivial.

Let $\cT$ be the tree of lifts of the pieces of the JSJ decomposition as in Proposition \ref{p.JSJtree}.
 Fix $F$ a leaf of $\widetilde \cF$.
Recall from the proof of Lemma \ref{denseidealpoints}
that the following holds: for any lift $\tild{M}_{i_0}^{j_0}$ of a piece $M_{i_0}$ of the JSJ decomposition of $M$, it intersects $F$ in a single component.
Let $\xi_F$ be the point of $S^1(F)$ corresponding to $\xi$.
We consider 2 distinct lifts $\tild{M}^j_i$ as follows.
First take an arbitrary $\tild{M}^j_i$ so that 
$\xi_F$ is not an ideal point of $A_1 = \tild{M}^j_i \cap F$.
Now take a second lift $\tild{M}^k_i$ so that
$A_1$ separates $A_2 = \tild{M}^k_i \cap F$ from $\xi_F$ $-$
this means that the closure of $A_1$ in
$F \cup S^1(F)$ separates $\xi_F$ from the closure of $A_2$
in $F \cup S^1(F)$.
We also choose $\tild{M}^k_i$ so that distance in
the JSJ tree from $\tild{M}^k_i$ to $\tild{M}^j_i$ is greater
than $3$.
Let now $\gamma$ be a non trivial deck transformation
that fixes $\tild{M}^k_i$.
By Proposition \ref{p.JSJtree} the diameter of the fixed
point set of $\gamma$ acting on the JSJ tree is less than
or equal to $2$. In particular $\gamma(\tild{M}^j_i)
\not = \tild{M}^j_i$. Since $A_1$ separates $A_2$ from
$\xi_F$, it now follows that $\gamma(\xi)$ is not
equal to $\xi$. 

This concludes the proof of the proposition.
\end{proof}

\begin{remark}
One can give a different proof of Proposition \ref{p.movingpoints} using different machinery that we chose not to present in detail. Indeed, if there is a global fixed point $\xi$ in the universal circle of a uniform $\R$-covered foliation by hyperbolic leaves, then the one-dimensional foliation by geodesics in each leaf landing as a geodesic fan on $\xi$ is equivariant and therefore descends to a one-dimensional foliation (which if chosen to be tangent to a unit vector field defines a flow) in $M$. By an argument in \cite{CalegariLam} (see the proof of \cite[Theorem 5.5.8]{CalegariLam}) this flow
is (topologically) Anosov\footnote{Or at least semiconjugate to it.} for which $\cF$ is the weak stable foliation. This is impossible since the flow would be $\R$-covered and not a suspension (because the center stable foliation is uniform). This flow also does not have periodic orbits freely homotopic to their inverses, because the orbits always point in the
direction of $\xi$. This contradicts  what is proved in \cite{FenleyAnosov,Barbot}. 
\end{remark}

%%%%%%%%%%%%%%%%%%%%%%%%%%%%%%%%%%%%%%%%%%%%%
\section{Proof of Theorem \ref{teo-Minimal}}

We fix in $\pi_1(M)$ a finite symmetric set of generators $\cS$ and denote by $|\gamma|$ the word length of $\gamma$ with respect to $\cS$. We will be concerned with sequences going to infinity, so the choice of $\cS$ is irrelevant. 

Theorem \ref{teo-Minimal} concerns the action of $\pi_1(M)$
on the universal circle $\suniv$. The universal circle is
canonically homeomorphic to $S^1(L)$ for any $L$ leaf of $\widetilde \cF$.
By Remark \ref{rem-action}, in order to prove Theorem \ref{teo-Minimal},
it is equivalent to consider the action 
of $\pi_1(M)$ on $S^1(L)$. So fix a leaf $L \in \Ft$ and denote by

$$\rho(\gamma): L \cup S^1(L) \to L \cup S^1(L), \ \ \ \
\rho(\gamma) (x) = \tau_{\gamma L , L} \circ \gamma$$ 

\noindent
This induces an action of $\pi_1(M)$ on $S^1(L)$ 
(but in general does not induce an action on $L$).
Again via the identification with the universal circle $\suniv$ this is exactly the action defined on $\suniv$ in Remark \ref{rem-action}. 
In this way $\rho$ is a group homomorphism from $\pi_1(M)$ into $\mathrm{Homeo}_+(S^1(L))$.

Fix a point $x_0 \in L$. The point $x_0$ allows us to define a \emph{visual measure} (cf. \S \ref{ss.boundaries}) in $S^1(L)$ that we will also fix.  

The first important property is the following: 

\begin{lemma}\label{lema-toinfty} 
Given a compact interval $\cI \subset \cL_{\cF} = \mt/_{\Ft}$ containing $L$ we have that if $\gamma_n \in \pi_1(M)$ satisfies $\gamma_n L \in \cI$ and $|\gamma_n| \to \infty$, then for every $x \in L$ we have $$ d_L (x, \rho(\gamma_n)x) \to \infty. $$
In particular, given $C \subset L$ compact, there is $K>0$ such that if $|\gamma|>K$ and $\gamma L \in \cI $ then $\rho(\gamma)C \cap C = \emptyset$. 
\end{lemma}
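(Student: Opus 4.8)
The plan is to reduce the statement to a statement purely about the deck action on $\mt$ and the quasi-isometry constants, and then use the local compactness of $\mt$ (finitely many elements of $\pi_1(M)$ in any ball of the Cayley graph). First I would fix $x \in L$ and recall that $\rho(\gamma_n)x = \tau_{\gamma_n L, L}(\gamma_n x)$, where by construction $\tau_{\gamma_n L, L}$ displaces points by a bounded amount in $\mt$: there is $a_0>0$, depending only on the Hausdorff distance between $L$ and the leaves $\gamma_n L$, hence only on the compact interval $\cI$, such that $d_{\mt}(\gamma_n x, \tau_{\gamma_n L, L}(\gamma_n x)) < a_0$ for all $n$. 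Therefore $d_{\mt}(x, \rho(\gamma_n)x) \geq d_{\mt}(x, \gamma_n x) - a_0$. Since $|\gamma_n| \to \infty$ and $\mt$ is quasi-isometric to the Cayley graph of $\pi_1(M)$ (with finitely many group elements in each ball), $d_{\mt}(x,\gamma_n x) \to \infty$, so $d_{\mt}(x, \rho(\gamma_n)x) \to \infty$ as well.

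Next I would upgrade this to divergence of the \emph{intrinsic} distance $d_L(x,\rho(\gamma_n)x)$ in the leaf $L$. This is exactly where the $\R$-covered hypothesis enters: by \cite[Proposition 2.1]{FenleyQI}, for an $\R$-covered foliation and any $a_1>0$ there is $a_2>0$ so that two points of a leaf $F$ of $\widetilde\cF$ at distance $\leq a_1$ in $\mt$ are at distance $\leq a_2$ in $F$; contrapositively, if $d_L(x, \rho(\gamma_n)x)$ stayed bounded along a subsequence then $d_{\mt}(x,\rho(\gamma_n)x)$ would stay bounded too, contradicting the previous paragraph. Hence $d_L(x,\rho(\gamma_n)x) \to \infty$ for the fixed $x$; and since $\rho(\gamma_n)$ is (the extension of) a quasi-isometry of $L$ with constants depending only on $\cI$, the same divergence holds for every $x \in L$, with a uniform comparison. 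The ``in particular'' clause is then immediate: if $C \subset L$ is compact, fix $x_0 \in C$ and let $D = \diametro_L(C)$; for $|\gamma|$ large and $\gamma L \in \cI$ we have $d_L(x_0, \rho(\gamma)x_0) > 2D + 2cD + 2c$ (with $c$ the uniform quasi-isometry constant on $\cI$), which forces $\rho(\gamma)C$ to lie entirely outside the $D$-ball around $x_0$ in $L$, hence disjoint from $C$. Since there are only finitely many $\gamma$ with $\gamma L \in \cI$ and $|\gamma|$ below any threshold that fail this, choosing $K$ large enough works.

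The main obstacle I anticipate is the passage from distance in $\mt$ to distance in the leaf $L$: a priori two points close in $\mt$ could be joined only by a very long path inside $L$, and this is precisely the failure mode one sees for non-$\R$-covered uniform foliations (the Anosov example discussed in \S\ref{ss.bddist}). So the crux is invoking the $\R$-covered distortion control \cite[Proposition 2.1]{FenleyQI}; everything else is soft (local finiteness of the Cayley graph, boundedness of the correction maps $\tau_{L,F}$, uniformity of quasi-isometry constants over a compact interval of leaves). A minor point to be careful about is that the constants $a_0$, $c$ in the correction maps $\tau_{\gamma_n L, L}$ genuinely depend only on the Hausdorff distance between $L$ and $\gamma_n L$, which is bounded because $\gamma_n L$ ranges in the compact interval $\cI$ — this uses uniformity of the foliation together with compactness of $\cI$.
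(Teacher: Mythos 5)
Your argument is correct and follows essentially the same route as the paper's proof: both rest on the facts that the correction maps $\tau_{\gamma L,L}$ displace points a uniformly bounded amount in $\mt$ (equivalently, have uniform quasi-isometry constants) when $\gamma L \in \cI$, and that the deck action is proper (finitely many elements of $\pi_1(M)$ move a bounded set near itself); the paper merely phrases this by pulling back the leafwise ball $B_R(x)$ to a bounded set of $\mt$ covered by finitely many fundamental domains. One small correction: the passage from $d_{\mt}(x,\rho(\gamma_n)x)\to\infty$ to $d_L(x,\rho(\gamma_n)x)\to\infty$ is immediate because $d_{\mt}\le d_L$ for points on a leaf, so the distortion estimate of \cite[Proposition 2.1]{FenleyQI} (which controls the opposite, genuinely nontrivial direction) is not needed there, contrary to your remark that it is the crux of the argument.
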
 

\begin{proof}
Fix a compact fundamental domain $Y$ of $M$ in $\mt$. For a given $R>0$ there is a bounded set $G \subset \mt$ 
which consists of the points $z$ in leaves $F \in \cI$ such that $\tau_{F,L}(z) \in B_R(x)$ where $B_R(x)$ denotes the ball of radius $R$ in $L$. 
The set $G$ is bounded because the quasi-isometry constants of  
$\tau_{F,L}|_L$ depend only on the Hausdorff distance between
$F$ and $L$. Since $F \in \cI$ the Hausdorff distance is bounded.
Now, one can cover $G$ by finitely many fundamental domains, implying that if $\gamma$ verifies that $\gamma L \in \cI$ and $|\gamma|$ is sufficiently large, then $\rho(\gamma) x$ cannot be in $B_R(x)$. This completes the first part of the Lemma. 

For the second statement, notice that estimates are uniform, so by compactness one gets the statement. 
\end{proof} 

This allows us to show the following: 

\begin{lemma}\label{lem-singularvalues}
For every finite interval $\cI \subset \cL_{\cF}$ containing $L$ and $\eps>0$ there is $K>0$ such that if $|\gamma|>K$ and $\gamma L \in \cI$ we have that there are (not necessarily disjoint) intervals $I_\gamma, J_\gamma$ of length (for the visual distance in $S^1(L)$) smaller than $\eps$ and such that $$\rho(\gamma)(S^1\setminus I_\gamma) \subset J_\gamma.$$ 
\end{lemma}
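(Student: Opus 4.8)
The plan is to exploit Lemma~\ref{lema-toinfty} together with the fact that $\rho(\gamma)$ is (the boundary extension of) a uniform quasi-isometry $L\to L$ in order to get the ``contracting'' behaviour on $S^1(L)$. Concretely, fix a basepoint $x_0\in L$ used to define the visual measure. Given $\eps>0$, there is a radius $R=R(\eps)>0$ so that any geodesic ray from $x_0$ that stays outside the ball $B_R(x_0)$ for a long time has ideal point confined to a visual interval of length $<\eps$; more precisely, since $L$ is Gromov hyperbolic with curvature close to $-1$, the set of ideal points of geodesic rays from $x_0$ passing within distance $R$ of a fixed faraway point $y$ is an interval whose visual length tends to $0$ as $d_L(x_0,y)\to\infty$, uniformly. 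This is the standard ``shadow lemma''/thin-triangles estimate and I would invoke it as a black-box property of $S^1(L)$ with its visual metric.

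First I would take $C=B_{R_0}(x_0)$ for a suitable fixed $R_0$ (large enough that the quasi-isometry constants of the $\tau_{F,L}$ for $F\in\cI$, which are uniformly bounded since $\cI$ is compact, guarantee control of images), and apply the second statement of Lemma~\ref{lema-toinfty} to get $K$ such that $|\gamma|>K$ and $\gamma L\in\cI$ imply $\rho(\gamma)C\cap C=\emptyset$. Then I would set $z_\gamma=\rho(\gamma)(x_0)$ and $w_\gamma=\rho(\gamma)^{-1}(x_0)$ (note $\gamma^{-1}L=\gamma^{-1}\cdot\gamma L$ need not lie in $\cI$, so I would instead directly analyze the quasi-isometry $\phi_\gamma:=\rho(\gamma):L\to L$ and its quasi-inverse without passing through a new interval — the quasi-inverse of a $(c,c)$-quasi-isometry is again a $(c',c')$-quasi-isometry with $c'$ depending only on $c$, hence uniformly bounded for $\gamma L\in\cI$). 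Since $\phi_\gamma$ moves $x_0$ to distance $\to\infty$ from $x_0$ (Lemma~\ref{lema-toinfty}), the ``entry point'' where the $\phi_\gamma$-image of a large ball around $x_0$ sits is far from $x_0$; by the quasi-isometry property the image $\phi_\gamma(L)=L$ is covered, but the image of the complement of a small neighbourhood of the repelling ideal point is squeezed near the attracting ideal point.

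The concrete construction: let $J_\gamma$ be the visual interval of points in $S^1(L)$ within ``shadow distance'' of $z_\gamma=\phi_\gamma(x_0)$; by the shadow lemma its length is $<\eps$ once $d_L(x_0,z_\gamma)$ is large enough, which holds for $|\gamma|>K'$ with $K'$ depending on $\cI,\eps$ via Lemma~\ref{lema-toinfty}. Let $I_\gamma$ be the corresponding shadow interval around $\phi_\gamma^{-1}(x_0)$, again of visual length $<\eps$ for $|\gamma|$ large. The point is that for $\xi\in S^1(L)\setminus I_\gamma$, a geodesic ray from $x_0$ to $\xi$ stays uniformly away from $\phi_\gamma^{-1}(x_0)$, so its $\phi_\gamma$-image is a quasigeodesic ray from $z_\gamma$ that, by thin triangles (the triangle with vertices $x_0$, $z_\gamma$, and $\phi_\gamma(\xi)$ passes near $z_\gamma$), fellow-travels a geodesic ray from $x_0$ passing close to $z_\gamma$; hence $\phi_\gamma(\xi)\in J_\gamma$. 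I would make this quantitative using a single uniform Morse/stability constant valid for all the quasigeodesics in question, legitimate because the quasi-isometry constants are uniform over $\gamma L\in\cI$.

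The main obstacle I anticipate is packaging the hyperbolic-geometry estimate cleanly: one must show that the visual length of the shadow of a far-away point goes to zero uniformly, and that a uniform quasi-isometry sending $x_0$ far away sends ``most'' of $S^1(L)$ into a small shadow. Both are standard for a \emph{fixed} hyperbolic space, but here $L$ ranges over all leaves; uniformity is recovered because the leaves have curvature uniformly close to $-1$ (so the comparison constants, the Morse lemma constant, and the decay rate of shadow lengths can be chosen once and for all) and because $\gamma L\in\cI$ forces the relevant quasi-isometry constants of $\tau_{\gamma L,L}$ to be bounded. I would state the needed uniform shadow estimate as a short preliminary observation about CAT($-1+\delta$) surfaces and then the rest is the bookkeeping above. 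A secondary subtlety is that $I_\gamma$ and $J_\gamma$ need not be disjoint — but the statement only asks for $\rho(\gamma)(S^1\setminus I_\gamma)\subset J_\gamma$, so no disjointness is required, and I would simply not attempt to arrange it.
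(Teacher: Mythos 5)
Your setup (uniform quasi-isometry constants over $\cI$, a compact set $C$ around $x_0$, Lemma~\ref{lema-toinfty} to push $\rho(\gamma)^{-1}C$ off $C$) matches the paper's, but the key contraction step is wrong as stated. You define $J_\gamma$ as a bounded-radius shadow of $z_\gamma=\rho(\gamma)(x_0)$ and claim that for $\xi\notin I_\gamma$ the geodesic $[x_0,\rho(\gamma)\xi)$ passes near $z_\gamma$ (via thin triangles), hence $\rho(\gamma)\xi\in J_\gamma$. This is false already for an isometry: in $\mathbb{H}^2$ (upper half-plane) take $g(z)=\lambda z$ with $\lambda$ huge and $x_0=i$, so $w_\gamma=g^{-1}x_0=i/\lambda$ and $z_\gamma=\lambda i$. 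The boundary point $\xi=Ce^{R}/\lambda$ (with $C$ a moderate constant) lies just outside the radius-$R$ shadow of $w_\gamma$, yet $g\xi=Ce^{R}$ is a bounded real number, and the geodesic from $i$ to $Ce^{R}$ stays at distance about $\log\lambda-R-\log C$ from $\lambda i$; so $g\xi$ is not in any fixed-radius shadow of $z_\gamma$ once $\lambda$ is large. Knowing that $[x_0,\xi)$ avoids an $R$-ball around $w_\gamma$ controls the Gromov product $(x_0\mid\xi)_{w_\gamma}$, not $(w_\gamma\mid\xi)_{x_0}$, which is what your thin-triangle claim would need; points just outside $I_\gamma$ genuinely escape the shadow of $z_\gamma$.

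The correct target interval is larger than that shadow and is determined by $\eps$, not by $|\gamma|$: in the example above, the image of the complement of $I_\gamma$ is the interval $\{|\zeta|\ge Ce^{R}\}\cup\{\infty\}$, of visual size comparable to $e^{-R}$, whereas the shadow of $z_\gamma$ has size comparable to $e^{R}/\lambda$. This is exactly how the paper proceeds: $C$ is chosen so that every $c$-quasigeodesic missing $C$ subtends a visual interval of length $<\eps/2$; one takes a geodesic $\ell$ separating $\rho(\gamma)^{-1}C$ from $C$, sets $I_\gamma$ to be the interval cut off by $\ell$ on the side away from $C$, and sets $J_\gamma$ to be the interval spanned by the ideal endpoints of the uniform quasigeodesic $\rho(\gamma)(\ell)$ -- its smallness comes from $\rho(\gamma)(\ell)$ staying away from $x_0$ (it misses essentially all of $C$), not from proximity to $\rho(\gamma)(x_0)$. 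Your argument becomes correct if you replace your $J_\gamma$ by the interval spanned by the image of the bounding geodesic of $I_\gamma$ (equivalently, the smallest interval containing $\rho(\gamma)(S^1(L)\setminus I_\gamma)$) and prove its smallness by the ``quasigeodesic far from the basepoint'' estimate rather than by a shadow of $z_\gamma$; as written, the stated inclusion $\rho(\gamma)(S^1\setminus I_\gamma)\subset J_\gamma$ fails.
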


\begin{proof}
Given the finite interval $\cI$ there exists a uniform constant $c>1$ so that for every $F \in \cI$ the map $\tau_{F,L}: F \to L$ is a quasi-isometry with constant $c$. It follows that the image by $\rho(\gamma)$ of a geodesic in $L$ is a $c$-quasigeodesic in $L$ whenever $\gamma L \in \cI$. 
Notice that $\tau_{F,L}|_L$ is not necessarily continuous, so 
$\tau_{F,L}|_L(c)$ not necessarily a continuous curve. But the
quasi-isometry inequalities still hold.

Fix $x_0$ in $L$.
Let $C \subset L$ be a compact set containing $x_0$ with the property that every quasigeodesic in $L$ with constants bounded by $c$ which does not intersect $C$ verifies that its visual measure is smaller than $\eps/2$.

\begin{figure}[ht]
\begin{center}
\includegraphics[scale=0.80]{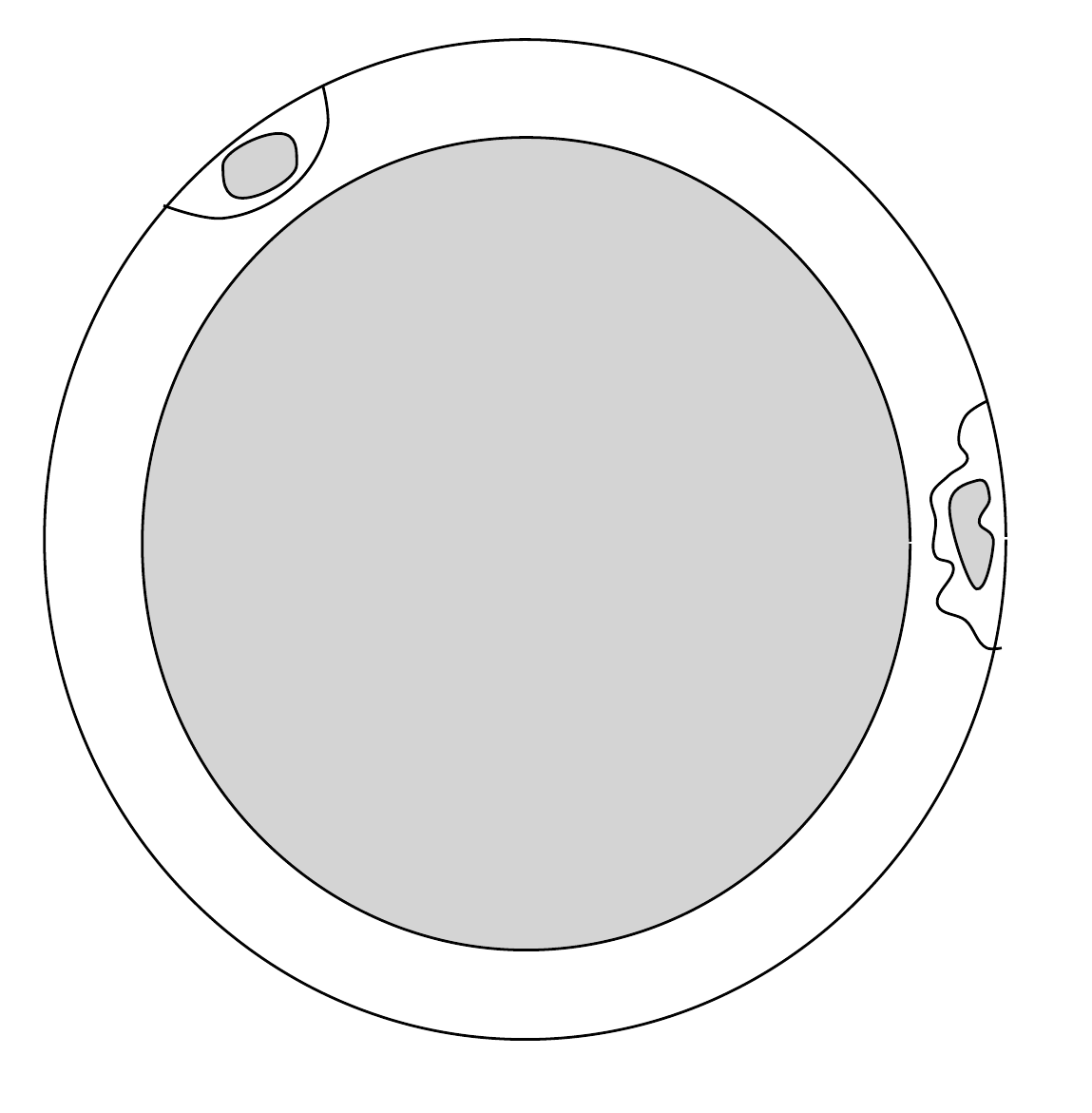}
\begin{picture}(0,0)
%\put(-310,20){$p$}
\put(-140,130){$C$}
\put(-220,210){$\ell$}
\put(-214,230){{\tiny$\hat \gamma^{-1}C$}}
\put(-38,138){{\tiny $\hat \gamma C$}}
\put(-48,112){$\hat \gamma \ell$}
%\put(-250,52){$u(p)$}
%\put(-280,50){\small$\cW^{u}(y)$}
%\put(-60,99){$x$}
%\put(-238,160){$\cW^{s}(x)$}
%\put(-290,120){$\cW^{s}(x')$}
\end{picture}
\end{center}
\vspace{-0.5cm}
\caption{{\small Depiction of the ingredients of the proof of Lemma \ref{lem-singularvalues}. Here $\hat \gamma:=\rho(\gamma)$.}\label{f.lema52}}
\end{figure}

Now, we can apply Lemma \ref{lema-toinfty} to find $K$ such that if $\gamma$ verifies that $\gamma L \in \cI$ and $|\gamma|>K$ then one has that $\rho(\gamma^{-1}) C \cap C = \emptyset$. By choosing $K$ a bit larger, one can assume that there is a geodesic $\ell$ in $L$
 which separates $\rho(\gamma^{-1}) C$ from $C$ (see figure \ref{f.lema52}). 
This uses that the diameter of $\rho(\gamma^{-1})C \cap C$ is uniformly bounded.
This allows us to define $I_\gamma$ as the (shortest) interval determined by the endpoints of $\ell$ (i.e. the one so that 
$I_{\gamma} \cup l \subset L \cup S^1(L)$ leaves $C$ on the outside) and $J_\gamma$ to the (shortest) interval joining the endpoints of the quasigeodesic $\rho(\gamma)(\ell)$. 

\end{proof}

Now we are in condition to prove minimality of the action:

\begin{prop}\label{p.minimal}
The action of $\pi_1(M)$ on $\suniv$ is minimal. In particular, given $\xi \in S^1(L)$ and an open interval $U \subset S^1(L)$ there exists $\gamma \in \pi_1(M)$ such that $\rho(\gamma)\xi \in U$. 
\end{prop}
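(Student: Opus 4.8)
The plan is to combine the "collapsing" Lemma \ref{lem-singularvalues} with the fact that $\pi_1(M)$ has no global fixed point on $\suniv$ (Proposition \ref{p.movingpoints}), in the spirit of the classical argument showing that a minimal-set-free action with "North--South dynamics along a sequence" must itself be minimal. First I would set up the standard reduction: let $Z\subset\suniv\cong S^1(L)$ be a minimal set for the action of $\pi_1(M)$ (it exists by Zorn's lemma, since $\suniv$ is compact and the action is by homeomorphisms). If $Z=\suniv$ we are done, so assume $Z$ is a proper, closed, $\pi_1(M)$-invariant set, hence contained in a finite or countable union of complementary intervals; in particular $Z$ has empty interior (if $Z$ were a proper closed invariant set with nonempty interior, its interior would be invariant and we could pass to the boundary, which is a smaller closed invariant set, contradicting minimality unless $Z$ is finite — and a finite invariant set contradicts Proposition \ref{p.movingpoints}). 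So $Z$ is either a single complementary-gap situation with $Z$ nowhere dense, or possibly a Cantor set.

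The heart of the argument is to rule out a proper minimal set using Lemma \ref{lem-singularvalues}. Fix a complementary interval $U_0$ of $Z$ of maximal length $\ell_0>0$ (maximal length is attained since the lengths of complementary intervals of a closed set sum to at most the total length of the circle, so only finitely many exceed any positive threshold). Choose $\eps < \ell_0/2$. Fix the leaf $L$ and a compact interval $\cI\subset\cL_\cF$ containing $L$ in its interior. Now I would pick any $\gamma_n\in\pi_1(M)$ with $\gamma_n L\in\cI$ and $|\gamma_n|\to\infty$; such a sequence exists because infinitely many deck transformations move $L$ within the neighborhood $\cI$ (this uses that $\cF$ is $\R$-covered so the leaf space is $\R$, that $L$ projects to a leaf whose holonomy or nearby translates stay in a compact interval of the leaf space, and that $\pi_1(M)$ is infinite — concretely, take a nontrivial $\gamma_0$ fixing $L$ if $\Gamma_L$ is infinite, or otherwise use that infinitely many translates $\gamma L$ accumulate, hence some sequence of distinct $\gamma_n$ has $\gamma_n L$ in a fixed compact interval). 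Applying Lemma \ref{lem-singularvalues} to $\cI$ and $\eps$, for $n$ large there are intervals $I_{\gamma_n}, J_{\gamma_n}$ of visual length $<\eps$ with $\rho(\gamma_n)(S^1\setminus I_{\gamma_n})\subset J_{\gamma_n}$.

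Now I would derive a contradiction: since $\eps<\ell_0/2<\ell_0$, the interval $I_{\gamma_n}$ cannot contain $\overline{U_0}$, so $\overline{U_0}\setminus I_{\gamma_n}$ contains an interval $V$ of positive length; since $Z$ meets every interval that is not contained in a complementary gap — and, crucially, $Z$ accumulates on the two endpoints of $U_0$ — we can find a point $z\in Z$ with $z\notin I_{\gamma_n}$ (indeed $Z\setminus I_{\gamma_n}\neq\emptyset$ because $Z$ is not contained in any single short interval: if it were, $\suniv\setminus Z$ would contain an interval of length $>2\pi-\eps>\ell_0$, contradicting maximality of $\ell_0$ among complementary intervals — here I am using the visual measure normalization, total mass of the circle). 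Then $\rho(\gamma_n)z\in J_{\gamma_n}$, and since $Z$ is invariant, $\rho(\gamma_n)z\in Z$, so $Z\cap J_{\gamma_n}\neq\emptyset$. Repeating with the full set, $\rho(\gamma_n)(Z\setminus I_{\gamma_n})\subset Z\cap J_{\gamma_n}$, and $Z\setminus I_{\gamma_n}$ is "most of $Z$" — precisely, $I_{\gamma_n}$ can contain at most a piece of $Z$ of visual length $<\eps$. So $Z$ is contained in $I_{\gamma_n}\cup \rho(\gamma_n)^{-1}(\text{stuff})$... the cleaner way: $Z\subset I_{\gamma_n}\cup(\text{preimage in }Z)$, hence $Z\setminus I_{\gamma_n}$ maps into $Z\cap J_{\gamma_n}$, an interval of length $<\eps<\ell_0$. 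Iterating, $\rho(\gamma_n^k)$ pushes all of $Z$ except a tiny part into smaller and smaller sets; but a minimal set cannot be swallowed into an interval shorter than its largest complementary gap would allow — formally, $Z\subset I_{\gamma_n}\cup J_{\gamma_n}\cup(\text{a gap of length}>\ell_0)$ is impossible since $Z$ is "$\ell_0$-dense" in the complement of its maximal gap. This forces $Z=\suniv$. The last sentence of the Proposition is then immediate: minimality means every orbit closure is all of $\suniv$, so $\overline{\{\rho(\gamma)\xi\}}=\suniv$ meets $U$.

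The main obstacle I expect is making the combinatorial "$Z$ cannot be swallowed" step fully rigorous: one must carefully track that $I_{\gamma_n}$ has small visual length while the complement of $Z$'s maximal gap has length bounded below, so that $Z\setminus I_{\gamma_n}$ genuinely contains points whose images under $\rho(\gamma_n)$ land in $J_{\gamma_n}$ yet cannot all fit — this requires knowing that $Z$ is not itself a single short interval (true, since a proper minimal set of a circle homeomorphism group is either finite or nowhere dense, and finite is excluded by Proposition \ref{p.movingpoints}), and it requires the existence of the sequence $\gamma_n$ with $\gamma_n L\in\cI$ and $|\gamma_n|\to\infty$, which should be extracted from the fact that $L$ has infinitely many $\pi_1(M)$-translates accumulating somewhere in the leaf space (a compact interval around such an accumulation point serves as $\cI$, after translating $L$ there, which is harmless since $\suniv$ is identified with $S^1(L')$ for any leaf $L'$).
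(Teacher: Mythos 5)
Your reduction to a minimal set $Z$ and the appeal to Lemma \ref{lem-singularvalues} are fine as far as they go, but the contradiction you try to extract from them does not hold, and this is a genuine gap rather than a technicality. From $\rho(\gamma_n)(S^1(L)\setminus I_{\gamma_n})\subset J_{\gamma_n}$ and invariance of $Z$ you only get $\rho(\gamma_n)(Z\setminus I_{\gamma_n})\subset Z\cap J_{\gamma_n}$; since $\rho(\gamma_n)$ is a homeomorphism, the tiny piece $Z\cap I_{\gamma_n}$ is simply spread over essentially all of $Z\setminus J_{\gamma_n}$, which is perfectly compatible with $Z$ being a proper (Cantor or finite) invariant set. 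In particular the claimed inclusion $Z\subset I_{\gamma_n}\cup J_{\gamma_n}\cup(\text{a gap})$ does not follow from anything you established. A model to keep in mind: a group of circle homeomorphisms with an exceptional minimal set and elements with north--south dynamics whose attracting and repelling fixed points lie \emph{in} the minimal set; there $I_n$ and $J_n$ can be taken arbitrarily short, yet no contradiction arises. The point is that smallness of $I_{\gamma_n},J_{\gamma_n}$ for an \emph{uncontrolled} sequence $\gamma_n$ (you only require $\gamma_n L\in\cI$ and $|\gamma_n|\to\infty$) says nothing about \emph{where} these intervals sit, and without placing one of them inside a prescribed gap of $Z$ (equivalently, inside an arbitrary open set $U$) no contradiction can be reached. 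A secondary flaw: Proposition \ref{p.movingpoints} only rules out a global fixed point, i.e.\ a one-point invariant set; it does not by itself exclude a finite invariant set of cardinality at least two, so that step of your reduction is also unjustified as stated.

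The missing location control is exactly the heart of the paper's argument, which is structured differently: given the target open set $U$, one picks two distinct points $\xi_1\neq\xi_2$ in $U$, chooses fundamental domains $T_1^n,T_2^n$ meeting $L$ in points converging to $\xi_1$ and $\xi_2$, and takes $\gamma_n$ with $\gamma_n T_1^n=T_2^n$; this forces $\gamma_n L\in\cI$ and $|\gamma_n|\to\infty$, so Lemma \ref{lem-singularvalues} applies, and then a separate argument (shrinking $U$ to $\hat U$ and transporting a geodesic through $T_1^n\cap L$ with both endpoints near $\xi_1$, whose image is a uniform quasigeodesic passing through $T_2^n$ and hence with an endpoint near $\xi_2$) shows that one of $I_{\gamma_n},J_{\gamma_n}$ must be contained in $U$; after possibly replacing $\gamma_n$ by $\gamma_n^{-1}$ one gets $J_{\gamma_n}\subset U$. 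Only then does Proposition \ref{p.movingpoints} enter, to replace $\xi$ by $\rho(\eta)\xi\notin I_{\gamma_n}$ when $\xi$ stubbornly lies in $I_{\gamma_n}$, giving $\rho(\gamma_n\eta)\xi\in J_{\gamma_n}\subset U$ and hence density of every orbit directly, with no need for the minimal-set formalism. To repair your proof you would need to add precisely this step: a construction of contracting elements whose attracting (or repelling) interval can be forced into an arbitrarily prescribed open interval, e.g.\ into a maximal gap of $Z$.
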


\begin{proof}
We first fix an open set $U \subset S^1(L)$.

Fix $T$ a compact fundamental domain of $M$ in $\mt$. Every other fundamental domain will be a translate of $T$ by a deck transformation. Let $D= \mathrm{diam}(T)$, which is also the diameter of any translate of $T$.
Let $\cI \subset \cL_{\cF}$ be a compact interval around $L$ such that the union $\bigcup_{F \in \cI} F$ contains the neighborhood of size $2D$ of the leaf $L$. Notice that this interval can be chosen thanks to the fact that $\cF$ is $\R$-covered and uniform.

Choose points $\xi_1 \neq \xi_2$ in the interior of $U$ and take fundamental domains $T^n_1, T^n_2$ of $M$ in $\mt$ such that they intersect $L$ in points very close to $\xi_1$ and $\xi_2$ respectively, more precisely, such that the intersection $T^n_i \cap L$ is non empty and $T^n_i \cap L \to \xi_i$
in $L \cup S^1(L)$.  See figure \ref{f.minimal}.

Now, we can choose $\gamma_n$ so that $\gamma_n(T_1^n) = T_2^n$. 
Since the diameter of $T^n_i$ is fixed, $T^n_i \cap L \to \xi_i$
and $\xi_1 \not = \xi_2$ then as $n \to \infty$, 
$${\rm for \  any } \ \ \
y^n_1 \in T^n_1, \ y^n_2 \in T^n_2, \ \ \ 
d(y^n_1,y^n_2) \to \infty.$$
\noindent
It follows that $|\gamma_n| \to \infty$.
Also $\gamma_n L \in \cI$ so that Lemma \ref{lem-singularvalues} applies. 
This also uses that $\cF$ is $\R$-covered.
Let $I_{\gamma_n}, J_{\gamma_n}$ be the intervals provided
by Lemma \ref{lem-singularvalues}.
We choose $\eps>0$ small so that the $2\eps$-neighborhood of both $\xi_1$ and $\xi_2$ in $S^1(L)$ is contained in $U$.  

By contradiction, we assume that there are arbitrarily large $n$ so
 that  neither $I_{\gamma_n}$ nor $J_{\gamma_n}$ are contained in $U$. Take $\hat U$ the subinterval of $U$ obtained by removing from
$U$ the $\epsilon$ neighborhoods of the endpoints
(i.e. if $U = (a,b)$ we consider $\hat U=(a+\eps, b-\eps)$). 
Notice that $\xi_1, \xi_2$ are in $\hat U$.
The choice of $I_{\gamma_n}$ and $J_{\gamma_n}$ implies that they are both disjoint from $\hat U$ and therefore $\gamma_n \hat U \cap \hat U = \emptyset$.  This will be a contradiction as follows:
take $c_n$ a geodesic in $L$ intersecting $T^n_1 \cap L$ whose endpoints are close to $\xi_1$ and contained in $\hat U$. 
Then the image by $\rho(\gamma_n)$ of $c_n$ is a uniform quasigeodesic,
because $\gamma_n L$ is in a compact interval $\cI$ in the leaf space.
This uniform quasigeodesic intersects 
$T^n_2$ and therefore has at least one endpoint 
in a neighborhood of $\xi_2$ if $n$ is large enough
(note that we cannot ensure that  both endpoints of $\rho(\gamma_n)(c_n)$ are contained in $\hat U$). This implies that $\gamma_n \hat U \cap \hat U \neq \emptyset$ which is a contradiction.

%
%Suppose that, that there are arbitrarily large $n$ so that neither 
%$I_{\gamma_n}$ or $J_{\gamma_n}$ is contained in $U$. 
%If not then up to subsequence and cutting subintervals of $U$ of
%size $\epsilon$ on both sides, we may assume that both $I_{\gamma_n}$
%and $J_{\gamma_n}$ are disjoint from $U$.
%In particular $\rho(\gamma_n)(U)$ is disjoint from $U$.
%But we can take a geodesic $c_n$ in $L$  with both endpoints arbitrarily
%close to $\xi_1$ (and so contained in $U$),
% and $c_n$ intersecting $T^n_1 \cap L$. The image $\rho(\gamma_n)(c_n)$
%is a uniform quasigeodesic intersecting $T^n_2$ so will have
%one endpoint in $U$ which is a contradiction. 

Therefore up to a subsquence and replacing $U$ by a slightly smaller
open set, it follows that either $I_{\gamma_n}$ or $J_{\gamma_n}$
is contained in $U$.
Up to taking $\gamma_n^{-1}$ we can assume that $J_{\gamma_n} \subset U$.

\begin{figure}[ht]
\begin{center}
\includegraphics[scale=0.80]{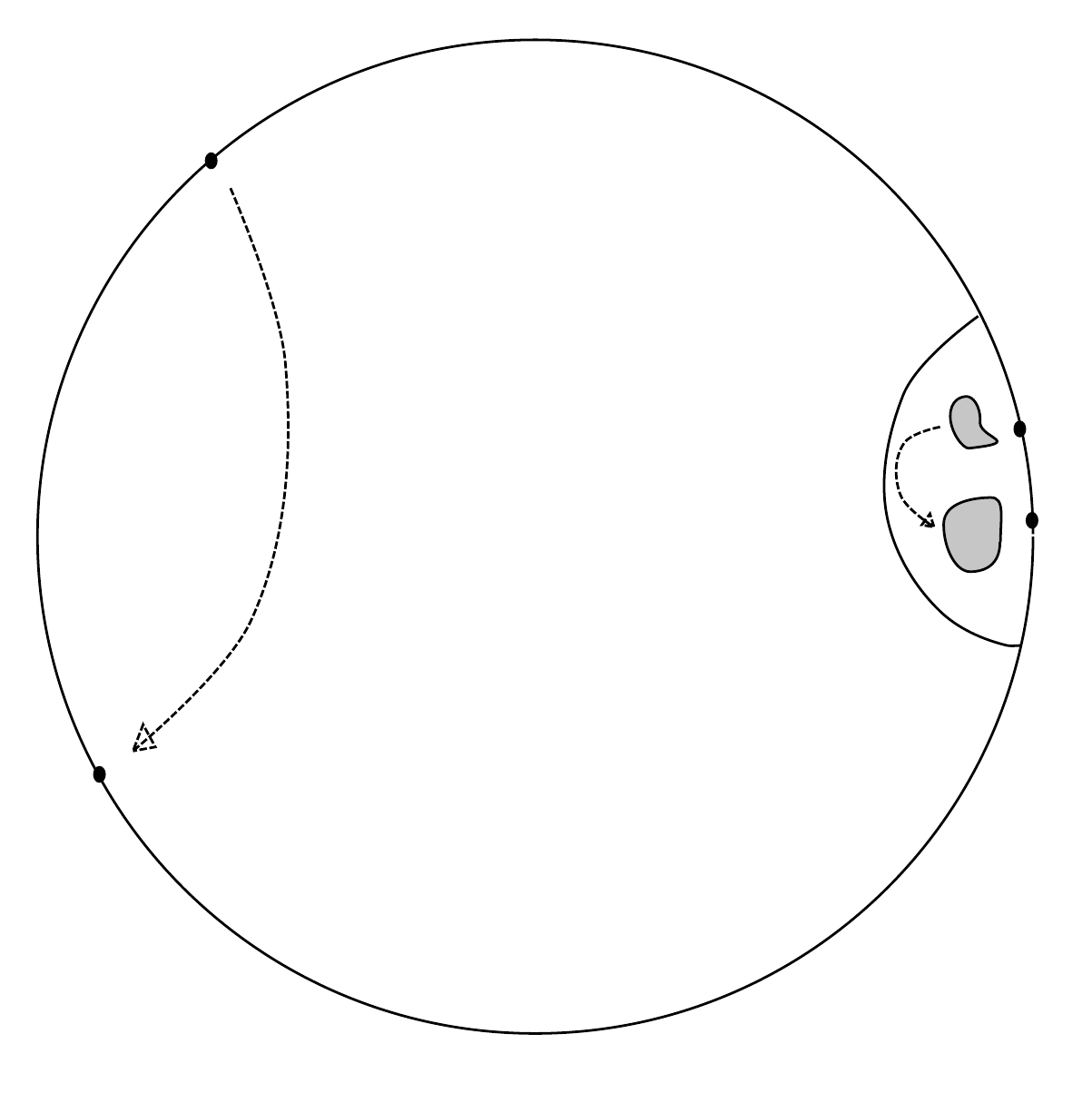}
\begin{picture}(0,0)
%\put(-310,20){$p$}
\put(-40,238){$L$}
\put(-203,150){$\rho(\eta)$}
\put(-243,238){$\xi$}
\put(-249,71){$\rho(\eta) \xi$}
\put(-27,195){ $U$}
\put(-21,165){ $\xi_1$}
\put(-14,145){$\xi_2$}
\put(-42, 164){{ $\hat T_1^n$}}
\put(-42, 140){{ $\hat T_2^n$}}
\put(-68, 153){{ $\hat \gamma_n$}}
%\put(-250,52){$u(p)$}
%\put(-280,50){\small$\cW^{u}(y)$}
%\put(-60,99){$x$}
%\put(-238,160){$\cW^{s}(x)$}
%\put(-290,120){$\cW^{s}(x')$}
\end{picture}
\end{center}
\vspace{-0.5cm}
\caption{{\small Depiction of the ingredients of the proof of Proposition \ref{p.minimal}. Here $\hat \gamma_n = \rho(\gamma_n)$ and $\hat T_i^n = T_i^n \cap L$.}\label{f.minimal}}
\end{figure}

We now choose an arbitrary point $\xi$ in $S^1(L)$. Pick $\eta \in \pi_1(M)$ so that $\rho(\eta)\xi \neq \xi$ (cf. Proposition \ref{p.movingpoints}). 
If necessary choose $\eps$ smaller so that the distance in $S^1(L)$
from $\xi$ to $\rho(\eta)\xi$ is bigger than $10 \eps$.

Assume first that $\xi \notin I_{\gamma_n}$ for arbitrarily large $n$. In this case, one concludes since $\rho(\gamma_n) \xi \in J_{\gamma_n} \subset U$ as desired. If $\xi \in I_{\gamma_n}$ for all large $n$, then by the choice
of $\eps$ it follows that 
$\rho(\eta)\xi \notin I_{\gamma_n}$ for large enough $n$. This implies that $\rho(\gamma_n \eta)\xi \in J_{\gamma_n} \subset U$ completing the 
proof of the proposition.
\end{proof}

We devote the rest of the section to the proof of transitivity 
of the action on pairs of points. First, we show that we can find attractor/repeller configurations in any pair of open sets. 

\begin{lemma}\label{lema-ar}
For every $U,V$ open intervals in $S^1(L)$ there is $\gamma \in \pi_1(M)$ such that $\rho(\gamma)(S^1(L) \setminus U) \subset V$. 
\end{lemma}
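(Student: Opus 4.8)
The plan is to leverage Lemma~\ref{lem-singularvalues} and the minimality already established in Proposition~\ref{p.minimal}. The heuristic is that Lemma~\ref{lem-singularvalues} gives, for suitable $\gamma$ with $|\gamma|$ large and $\gamma L$ in a fixed compact interval $\cI$, intervals $I_\gamma, J_\gamma$ of small visual length with $\rho(\gamma)(S^1(L)\setminus I_\gamma)\subset J_\gamma$; this is exactly an ``almost attractor/repeller'' configuration, but the intervals $I_\gamma, J_\gamma$ are not a priori located in prescribed open sets. So the main task is to \emph{steer} such a configuration so that $I_\gamma$ lands outside a given $U$ (equivalently, so that the complement of a tiny set maps into $V$).

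First I would reuse the construction from the proof of Proposition~\ref{p.minimal} verbatim: pick $\xi_1\neq\xi_2$ in the interior of $V$, pick fundamental domains $T_1^n, T_2^n$ with $T_i^n\cap L\to\xi_i$ in $L\cup S^1(L)$, choose $\gamma_n$ with $\gamma_n(T_1^n)=T_2^n$; then $|\gamma_n|\to\infty$, $\gamma_n L\in\cI$, and Lemma~\ref{lem-singularvalues} produces $I_{\gamma_n}, J_{\gamma_n}$ of visual length $<\eps$. Exactly as in Proposition~\ref{p.minimal}, one argues that if neither $I_{\gamma_n}$ nor $J_{\gamma_n}$ were contained in $V$ for infinitely many $n$, then $\gamma_n\hat V\cap\hat V=\emptyset$ (where $\hat V$ is $V$ shrunk by $\eps$), yet a geodesic $c_n$ through $T_1^n\cap L$ with endpoints near $\xi_1$ in $\hat V$ maps under $\rho(\gamma_n)$ to a uniform quasigeodesic meeting $T_2^n$, hence with an endpoint near $\xi_2\in\hat V$ — a contradiction. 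So, after passing to a subsequence and shrinking $V$ slightly, either $I_{\gamma_n}\subset V$ or $J_{\gamma_n}\subset V$, and replacing $\gamma_n$ by $\gamma_n^{-1}$ (which swaps the roles of $I$ and $J$) we may assume $J_{\gamma_n}\subset V$, so $\rho(\gamma_n)(S^1(L)\setminus I_{\gamma_n})\subset V$ with $I_{\gamma_n}$ of visual length $<\eps$ but located arbitrarily.

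Now the remaining point is to move $I_{\gamma_n}$ off of $U$. By minimality (Proposition~\ref{p.minimal}) applied to the complement, or rather by the following trick: pick three distinct points $\zeta_1,\zeta_2,\zeta_3\in S^1(L)\setminus \overline{U}$; for $n$ large the small interval $I_{\gamma_n}$ can contain at most one of them, say it misses $\zeta_{j_n}$ with $j_n$ constant along a subsequence, call it $\zeta$. Choose, using Proposition~\ref{p.minimal} (minimality), an element $\eta\in\pi_1(M)$ with $\rho(\eta)(S^1(L)\setminus U)$ contained in a tiny neighborhood of $\zeta$ — more carefully, since $I_{\gamma_n}$ is small and we only need $\rho(\eta)(S^1(L)\setminus U)\cap I_{\gamma_n}=\emptyset$, it suffices that $\rho(\eta)$ maps $S^1(L)\setminus U$ into a fixed small arc around $\zeta$ disjoint from all $I_{\gamma_n}$; such $\eta$ exists by iterating the attractor estimate of Lemma~\ref{lem-singularvalues} once more (choosing its $I_\eta\subset U$, possible by the same steering argument with $U$ in place of $V$) so that $\rho(\eta)(S^1(L)\setminus U)\subset J_\eta$, a tiny arc we may arrange, again by the Proposition~\ref{p.minimal} argument, to avoid $U$ and hence to miss $I_{\gamma_n}$ for large $n$. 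Then $\gamma:=\gamma_n\eta$ satisfies $\rho(\gamma)(S^1(L)\setminus U)=\rho(\gamma_n)(\rho(\eta)(S^1(L)\setminus U))\subset\rho(\gamma_n)(S^1(L)\setminus I_{\gamma_n})\subset J_{\gamma_n}\subset V$, as desired.

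The main obstacle is the bookkeeping of ``which of $I$ or $J$ ends up in the prescribed open set,'' together with making sure the auxiliary element $\eta$ genuinely pushes the complement of $U$ \emph{off} the (uncontrolled, but small) interval $I_{\gamma_n}$ — this is where one must invoke Lemma~\ref{lem-singularvalues} a second time rather than bare minimality, since minimality alone moves points, not the complement of an interval. Once the contraction estimate of Lemma~\ref{lem-singularvalues} is available on both sides (for $\eta$ steering $I_\eta$ into $U$, and for $\gamma_n$ steering $J_{\gamma_n}$ into $V$), the composition closes the argument. A cleaner packaging, which I would adopt if space permits, is to first prove the symmetric statement ``for every pair of open intervals $U,V$ there is $\gamma$ with $\rho(\gamma)(S^1(L)\setminus U)\subset V$'' directly by the single steering argument above with the observation that $\rho(\gamma_n^{\pm 1})$ gives \emph{both} an attractor in $V$ and (after inverting) a repeller we can place, and then derive density on pairs as a formal consequence.
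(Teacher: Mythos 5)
Your first step is fine and is essentially the paper's: running the steering argument of Proposition \ref{p.minimal} inside $V$ produces $\gamma_n$ with $|\gamma_n|\to\infty$, $\gamma_n L\in\cI$, and (after a subsequence and possibly inverting) $J_{\gamma_n}\subset V$, so $\rho(\gamma_n)(S^1(L)\setminus I_{\gamma_n})\subset V$ with $I_{\gamma_n}$ small but of unknown location. The gap is in how you dispose of $I_{\gamma_n}$. You need $\eta$ with $\rho(\eta)(S^1(L)\setminus U)\cap I_{\gamma_n}=\emptyset$, and you propose to get it by a second application of the steering argument, choosing $I_\eta\subset U$ so that $\rho(\eta)(S^1(L)\setminus U)\subset J_\eta$, and then ``arranging'' $J_\eta$ to avoid $U$ and ``hence'' to miss $I_{\gamma_n}$. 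This fails on two counts. First, the steering argument controls the location of only one of the two intervals attached to an element: once you force $I_\eta\subset U$, you have no control on where $J_\eta$ sits, so there is nothing to arrange. Second, even if $J_\eta$ did avoid $U$, that would not imply $J_\eta\cap I_{\gamma_n}=\emptyset$, since, as you note yourself, $I_{\gamma_n}$ is located arbitrarily and need not lie in $U$. The earlier suggestion that minimality alone yields $\eta$ with $\rho(\eta)(S^1(L)\setminus U)$ inside a tiny arc around $\zeta$ is circular (it is an instance of Lemma \ref{lema-ar} itself), and the three-point trick only guarantees that the point $\zeta$ is missed by each $I_{\gamma_n}$, not that some fixed arc around $\zeta$ is missed, because the $I_{\gamma_n}$ could accumulate on $\zeta$.

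The gap is easy to close, and the repair is exactly what the paper does. Since the visual lengths of $I_{\gamma_n}$ and $J_{\gamma_n}$ tend to $0$ (Lemma \ref{lem-singularvalues} applied with $\eps\to 0$), after a further subsequence $I_{\gamma_n}\to\xi_1$ and $J_{\gamma_n}\to\xi_2$ for some points $\xi_1,\xi_2\in S^1(L)$ (possibly equal). Now use minimality only on points: by Proposition \ref{p.minimal} there are $\eta_1,\eta_2$ with $\rho(\eta_1)(\xi_1)\in U$ and $\rho(\eta_2)(\xi_2)\in V$. Then $\rho(\eta_1^{-1})(U)$ is an open neighborhood of $\xi_1$, hence contains $I_{\gamma_n}$ for $n$ large, and $\rho(\eta_2)(J_{\gamma_n})\subset V$ for $n$ large; therefore $\beta_n=\eta_2\,\gamma_n\,\eta_1^{-1}$ satisfies $\rho(\beta_n)(S^1(L)\setminus U)\subset V$. (With your normalization $J_{\gamma_n}\subset V$ already in hand, the single element $\gamma:=\gamma_n\eta_1^{-1}$ suffices for $n$ large.) The point is that minimality is enough to steer the limit point of the uncontrolled small interval into $U$ by a fixed element, and the smallness of $I_{\gamma_n}$ does the rest; a second attractor estimate with both of its intervals prescribed is neither available nor needed.
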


\begin{proof}
Consider a sufficiently large compact interval $\cI \subset \cL_\cF$ as in the proof of Proposition \ref{p.minimal} so that the union of its leaves contains a neighborhood of size larger than the diameter of a fundamental domain around $L$.  

As in the proof of Proposition \ref{p.minimal}, it is possible to construct a sequence $\gamma_n \in \pi_1(M)$ such that $|\gamma_n| \to \infty$ and such that the neighborhoods $I_{\gamma_n}$ and $J_{\gamma_n}$ verify (up to taking a subsequence) that $I_{\gamma_n} \to \xi_1$ and $J_{\gamma_n} \to \xi_2$ where it could be that $\xi_1 = \xi_2$. This is just taking very large elements that move a fundamental domain intersecting $L$ into other fundamental domain intersecting $L$ and applying Lemma \ref{lem-singularvalues}. 

Now, using Proposition \ref{p.minimal} we choose $\eta_1$ and $\eta_2$ in
$\pi_1(M)$ satisfying $\rho(\eta_1)(\xi_1) \in U$ and 
$\rho(\eta_2)(\xi_2) \in V$. It follows that for sufficiently large $n$ the deck transformation $\beta_n= \eta_2 \circ \gamma_n \circ \eta_1^{-1}$ verifies that  $\rho(\beta_n) (S^1\setminus U) \subset V$. 

To see this, notice that $\rho(\eta_1^{-1}) (U)$ contains $I_{\gamma_n}$ for suficiently large $n$ because $\rho(\eta_1)(\xi_1) \in U$. Similarly, if $n$ is large enough, then $\rho(\eta_2)(J_{\gamma_n})$ is contained in $V$. Since $\rho(\gamma_n)(S^1(L) \setminus I_{\gamma_n}) \subset J_{\gamma_n}$, this completes the proof.  
\end{proof}

To complete the proof of Theorem \ref{teo-Minimal} it is enough to show:

\begin{prop}\label{prop-trans}
Given open intervals $U_1, V_1 \subset S^1(L)$ and $U_2, V_2 \subset S^1(L)$ there exists $\hat \gamma \in \pi_1(M)$ such that $\rho(\hat \gamma) U_1 \cap U_2 \neq \emptyset$ and $\rho(\hat \gamma) V_1 \cap V_2 \neq \emptyset$. In particular, there exists a pair $\xi_1 \neq \xi_2 \in \suniv$ whose $\pi_1(M)$-orbit is dense in $\suniv \times \suniv \setminus \{\text{diagonal}\}$. 
\end{prop}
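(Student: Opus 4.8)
The plan is to reduce Proposition~\ref{prop-trans} to the attractor/repeller configurations provided by Lemma~\ref{lema-ar} by a ping-pong style argument, and then extract a dense orbit in $\suniv\times\suniv\setminus\{\text{diagonal}\}$ via a Baire category (or explicit diagonal enumeration) argument. First I would fix the four open intervals $U_1,V_1,U_2,V_2\subset S^1(L)$. Since $S^1(L)$ is a circle and the $U_i,V_i$ are open, I may shrink them so that $\overline{U_1}\cap\overline{V_1}=\emptyset$ and $\overline{U_2}\cap\overline{V_2}=\emptyset$; it suffices to find $\hat\gamma$ moving the smaller intervals appropriately. Apply Lemma~\ref{lema-ar} with the roles $U\rightsquigarrow$ (complement of a small neighborhood of a point in $V_1$), $V\rightsquigarrow U_2$, to get $\gamma_1$ with $\rho(\gamma_1)(S^1(L)\setminus U)\subset U_2$, so in particular $\rho(\gamma_1)$ maps a large arc — one containing $U_1$, after a preliminary adjustment — into $U_2$. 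The subtlety is that a single application of Lemma~\ref{lema-ar} gives control of \emph{one} target interval; to hit both $U_2$ and $V_2$ simultaneously I will instead use a composition.

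The key step is the following two-interval gadget. Using Lemma~\ref{lema-ar} choose $\gamma$ with $\rho(\gamma)(S^1(L)\setminus U_1)\subset W$, where $W$ is a tiny interval; this forces $\rho(\gamma)$ to be a strong north–south type map whose ``source'' arc is $S^1(L)\setminus U_1$ (mapped into $W$) — equivalently, $\rho(\gamma^{-1})$ collapses $S^1(L)\setminus W$ into $U_1$. Now pick, again by Lemma~\ref{lema-ar}, an element $\delta$ with $\rho(\delta)(S^1(L)\setminus W')\subset$ (small neighborhood of $U_2$) for a suitable small $W'$; arrange $W,W'$ so that $W$ is contained in the ``expanding'' side of $\delta$. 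Then $\hat\gamma=\delta\gamma$ satisfies: $\rho(\hat\gamma)(U_1)$ is large (it covers all but a tiny arc), hence meets $U_2$; and the tiny uncontrolled arc can be placed, by the freedom in choosing $W,W'$ and by first conjugating with an element from Proposition~\ref{p.minimal}, so that $\rho(\hat\gamma)(V_1)$ also meets $V_2$. Concretely: since $\overline{U_1}\cap\overline{V_1}=\emptyset$, the interval $\rho(\gamma)(V_1)$ lies inside the tiny $W$; choosing $\delta$ so that $W$ lies in the part of $S^1(L)$ that $\rho(\delta)$ sends onto (a neighborhood of) $V_2$ — which is possible because Lemma~\ref{lema-ar} lets us prescribe where the ``repeller'' interval $U$ sits, and we can first move $W$ by Proposition~\ref{p.minimal} into that location — we get $\rho(\hat\gamma)(V_1)\cap V_2\neq\emptyset$ as well. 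This is the step I expect to be the main obstacle: bookkeeping the sizes $W,W'$ and the placement of the uncontrolled arcs so that \emph{both} intersection conditions hold at once; the right way to organize it is to note that the map $\rho(\delta\gamma)$ restricted to $U_1\cup V_1$ is, up to a uniformly small error, constant equal to two prescribed points, and those two points can be steered independently into $U_2$ and $V_2$ by pre/post-composing with elements from Proposition~\ref{p.minimal} applied to $\xi_1,\xi_2$ separately (the independence coming precisely from the fact that $\overline{U_1}\cap\overline{V_1}=\emptyset$, so the two arcs are collapsed to \emph{different} controlled locations).

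For the final sentence, I would run a standard Baire category argument in the Polish space $\suniv\times\suniv$. Enumerate a countable basis $\{O_m\}$ of $\suniv\times\suniv\setminus\{\text{diagonal}\}$, and for each pair $(m,n)$ the set $\{(\xi,\xi')\ :\ \exists\,\gamma\in\pi_1(M)\text{ with }\rho(\gamma)(\xi,\xi')\in O_n\text{ and }(\xi,\xi')\in O_m\}$; using the proposition just proved (with $U_1\times V_1$ a small box inside $O_m$ and $U_2\times V_2$ a small box inside $O_n$) one checks this is open and dense. Intersecting over all $(m,n)$ gives a dense $G_\delta$ of points with dense orbit; any point of it away from the diagonal (the diagonal is nowhere dense) yields the desired $\xi_1\neq\xi_2$. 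Alternatively, one can avoid Baire and build $(\xi_1,\xi_2)$ directly as a nested intersection of boxes, at stage $k$ pushing the previously chosen box into the $k$-th basic open set by the element from Proposition~\ref{prop-trans} and pulling back, which is perhaps cleaner to write. This concludes the proof of Theorem~\ref{teo-Minimal}: minimality is Proposition~\ref{p.minimal}, and density of the diagonal action on ordered pairs of distinct points is the statement just established.
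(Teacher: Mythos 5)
Your overall plan (compose two attractor/repeller elements from Lemma \ref{lema-ar}, then a Baire category argument for the dense pair) is the right one, and your Baire argument for the last sentence is exactly the paper's. But the key composition step, which you yourself flag as the main obstacle, does not go through as written. After choosing $\gamma$ with $\rho(\gamma)(S^1(L)\setminus U_1)\subset W$ for a tiny interval $W$, you correctly have $\rho(\gamma)V_1\subset W$ and $\rho(\gamma)U_1\supset S^1(L)\setminus W$; what you then need is a $\delta$ sending $W$ into (or across) $V_2$ while the complement of $W$ still reaches $U_2$. You try to get this by taking $\delta$ of north--south type aimed at a neighborhood of $U_2$ and placing $W$ in its ``expanding''/repeller side, asserting that this side is sent ``onto a neighborhood of $V_2$''. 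Lemma \ref{lema-ar} gives no such control: it constrains only the image of the \emph{complement} of the repeller interval, and says nothing about where a prescribed subarc of the repeller interval goes. Likewise, ``first move $W$ by Proposition \ref{p.minimal} into that location'' is not justified: minimality moves a point into an open set, not an interval into a prescribed interval. Finally, the organizing claim that $\rho(\delta\gamma)$ is nearly constant on $U_1\cup V_1$ with two independently steerable values is false for your $\gamma$: only $V_1$ is collapsed (into $W$), while $\rho(\gamma)U_1$ covers all of $S^1(L)\setminus W$, and in any case Lemma \ref{lema-ar} gives no way to prescribe how a single element distributes two given subarcs of the tiny interval $W$ over the two disjoint targets.

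The repair is short and is essentially the paper's argument. Either apply Lemma \ref{lema-ar} a second time with repeller interval $U'$ chosen disjoint from $\overline{W}$ and target $V_2$: then $\rho(\delta)(W)\subset\rho(\delta)(S^1(L)\setminus U')\subset V_2$, so $\rho(\delta\gamma)V_1\subset V_2$, while $\rho(\delta\gamma)U_1\supset\rho(\delta)(S^1(L)\setminus W)\supset S^1(L)\setminus V_2\supset U_2$ (after shrinking so that $U_2\cap V_2=\emptyset$). Or, as the paper does after making $U_1,V_1,U_2,V_2$ pairwise disjoint: take $\gamma$ with $\rho(\gamma)(S^1(L)\setminus U_1)\subset V_2$, so $V_1$ already lands in $V_2$, and then $\eta$ with $\rho(\eta)(S^1(L)\setminus\rho(\gamma)V_1)\subset U_2$ (note $\rho(\gamma)V_1$ is again an open interval); then $\hat\gamma=\eta\gamma$ sends $U_1$ into $U_2$ because $\rho(\gamma)U_1$ is disjoint from $\rho(\gamma)V_1$, and sends $V_1$ onto a set containing $S^1(L)\setminus U_2\supset V_2$. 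In both fixes the second element's repeller is pinned down relative to the image of $V_1$ (or of $W$), which is precisely the control your write-up was missing.
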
 

\begin{proof}
By reducing the intervals we can assume without loss of generality that the four intervals $U_1, U_2, V_1, V_2$ are disjoint. 

Apply Lemma \ref{lema-ar} to find deck transformations $\gamma$ and $\eta$ which verify that $\rho(\gamma )(S^1(L) \setminus U_1) \subset V_2$ and $\rho(\eta) (S^1(L) \setminus \rho(\gamma) V_1) \subset U_2$. 

Now, the transformation $\hat \gamma=\eta \gamma$ is the desired one. Indeed, 
$$\rho(\gamma)U_1 \cap \rho(\gamma)V_1 = \emptyset, 
\ \ {\rm or } \ \ 
\rho(\gamma)U_1 \subset S^1(L) \setminus \rho(\gamma)V_1,$$

\noindent
which implies that 
$\rho(\eta \gamma) U_1 \subset U_2$.
 In addition 

%$$V_2 \subset \rho(\eta \gamma) V_1, \ \ 
%{\rm because} \ \   

$$\rho(\eta \gamma)V_1 = \rho(\eta)\rho(\gamma)V_1
\supset S^1(L) \setminus U_2 \supset V_2.$$

The existence of dense orbits is now standard. Indeed, pick a countable basis $\{U_n\}$ of intervals generating the topology of $S^1(L)$. The set $A_{n,m}$ of pairs of different points $\xi_1, \xi_2$ such that there exists $\gamma \in \pi_1(M)$ such that $\rho(\gamma)\xi_1 \in U_n$ and $\rho(\gamma) \xi_2 \in U_m$ is clearly open and it is dense because of what we just proved. Then, the intersection $\bigcap_{n,m} A_{n,m}$ is a residual subset by Baire's category theorem and the orbit of points in $A_{n,m}$ is always dense in $\suniv \times \suniv$. 
\end{proof}

\section{Branching foliations}
In this section we just point out that all our results work in the setting of branching foliations as they appear in the study of partially hyperbolic dynamics. These objects were introduced by Burago-Ivanov \cite{BI}. We give here a definition that excludes a priori the existence of Reeb component like objects. 

A \emph{branching foliation} $\cF_{bran}$ in a 3-manifold $M$ is a collection of immersed surfaces (tangent to a continuous distribution) called leaves with the following properties. If $\tild{\cF}_{bran}$ is the lift of the collection to $\mt$ then:

\begin{itemize}
\item Each leaf $L$ of $\tild{\cF}_{bran}$ is a properly embedded plane in $\mt$ and separates $\mt$ in two open regions $L^{\oplus}$ and $L^{\ominus}$. Denote $L^+= L \cup L^{\oplus}$ and $L^- = L \cup L^{\ominus}$.  
\item Every point in $\mt$ belongs to at least one leaf $L \in \tild{\cF}_{bran}$. 
\item The leaves do not topologically cross. That is, given two leaves $L$ and $F$ of $\tild{\cF}_{bran}$ we have that $F \subset L^+$ or $F \subset L^-$. 
\item Given a sequence of points $x_n \to x \in \mt$ and leaves $L_n$ with $x_n \in L_n$ it follows that through $x$ there is a leaf $L \in \tild{\cF}_{bran}$ which is the uniform limit in compact parts of $L_n$. 
\end{itemize}

In \cite[\S 3]{BFFP2} (see also \cite[\S 3]{BFP}) a careful study of the properties of these objects is performed, including a study of the \emph{leaf space} associated to such a branching foliation. In particular, it makes perfect sense to talk about uniform branching foliations and $\R$-covered ones. Moreover, in the
partially hyperbolic setting there exists foliations in $M$ that approach 
the center stable and center unstable branching foliations.
In this setting this can be used to have in general situations a metric in $M$ which gives curvature arbitrarily close to 
$-1$ to all leaves of $\cF$. In this setting, one can define a universal circle as one does for general foliations. 

All arguments performed in this note thus hold for branching foliations. We state the result in this context for future use and explain how it can be deduced from the results proved in this paper. (Note that we could have performed our arguments directly in the branching foliation setting, but we decided to work out the foliation case first since we believe that many people may only be interested by the true foliation case.) 
%\marg{R: Added this remark.} 

\begin{teo}
Let $\cF$ be a uniform branching foliation. Then, it is $\R$-covered. Moreover, if $M$ admits a metric making every leaf negatively curved, then the action of $\pi_1(M)$ is minimal in the universal circle $\suniv$ and moreover it acts transitively in pairs of points of $\suniv$. 
\end{teo}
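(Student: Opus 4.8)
The plan is to deduce this theorem for branching foliations by transferring each result from the genuine foliation case, using the approximation and collapsing machinery developed in \cite{BFFP2, BFP}. First I would recall that a branching foliation $\cF_{bran}$ as defined above comes with a \emph{collapsing map} $h\colon M\to M$ (a continuous map, homotopic to the identity, which is a uniform limit of homeomorphisms) and an actual foliation $\cF$ whose leaves are the $h$-images of the branching leaves, or more precisely, that $\cF_{bran}$ is obtained from a $C^{0}$ foliation $\cF$ by a leafwise collapse along the lift $\tild h\colon \mt\to\mt$; the key point is that $\tild h$ is $\pi_1(M)$-equivariant and moves points a bounded distance, so it is a quasi-isometry sending leaves of $\Ft$ to leaves of $\tild\cF_{bran}$ within bounded Hausdorff distance. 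Consequently $\cF_{bran}$ is uniform if and only if $\cF$ is, and the leaf spaces of $\tild\cF_{bran}$ and $\tild\cF$ are canonically identified (this is the content of the leaf space analysis in \cite[\S 3]{BFFP2}). Applying Theorem \ref{teo-Rcovered} to $\cF$ then gives that $\cF$ is $\R$-covered, and hence so is $\cF_{bran}$; this proves the first assertion.

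Next I would address the universal circle. Assuming $M$ carries a metric making every leaf of $\cF_{bran}$ negatively curved with curvature uniformly close to $-1$ (as is the case in the partially hyperbolic setting, where one approximates the center-stable branching foliation), each branching leaf $L$ is Gromov hyperbolic, so $S^1(L)$ is defined, and the bounded-distance quasi-isometry $\tild h$ identifies $S^1$ of a branching leaf with $S^1$ of the corresponding genuine leaf of $\Ft$. The cylinder at infinity $\cA$ and the equivalence relation $\sim$ defining $\suniv$ are built purely from quasi-isometric data (quasigeodesics a bounded Hausdorff distance apart), so the construction in \S\ref{sss.uniform} goes through verbatim for $\cF_{bran}$, and the resulting universal circle is $\pi_1(M)$-equivariantly homeomorphic to the one for $\cF$. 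In particular the $\pi_1(M)$-action on $\suniv(\cF_{bran})$ is conjugate to the action on $\suniv(\cF)$.

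With these identifications in place, minimality and transitivity on pairs follow immediately from Theorem \ref{teo-Minimal} applied to $\cF$, since minimality and the density of the diagonal orbit are invariants of a topologically conjugate action. I would therefore organize the write-up as: (i) recall the collapsing map and its equivariant bounded-displacement property; (ii) observe uniformity and the $\R$-covered property transfer, invoking Theorem \ref{teo-Rcovered}; (iii) observe that $\cA$ and $\suniv$ transfer equivariantly; (iv) invoke Theorem \ref{teo-Minimal}. The main obstacle — and the only place that needs genuine care rather than bookkeeping — is step (i)--(iii): one must check that every geometric input used in Sections \ref{s.uniform}--\ref{s.UnivJSJ} (properly embedded plane leaves, separation properties, $\pi_1$-injectivity of leaves, the Roussarie--Gabai position of JSJ tori, and the behavior of quasigeodesics under $\tau_{L,F}$) survives the collapse, i.e. that the branching leaves through the collapsed locus do not introduce pathologies. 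This is precisely what the a priori exclusion of Reeb-like components in our definition of branching foliation guarantees, together with the analysis of \cite[\S 3]{BFFP2}, so the verification is routine but must be stated; alternatively, as we note, one could rerun the proofs of Sections \ref{s.uniform}--\ref{s.UnivJSJ} directly in the branching setting, where the same arguments apply word for word once one uses the branching leaf space in place of the foliation leaf space.
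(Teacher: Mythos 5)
Your proposal is correct and follows essentially the same route as the paper: approximate the branching foliation by a genuine foliation via the Burago--Ivanov collapsing map, transfer uniformity and the leaf-space identification through the lifted map, apply Theorem \ref{teo-Rcovered}, and deduce minimality and pair transitivity from Theorem \ref{teo-Minimal} via the equivariant identification of universal circles (the paper likewise notes the alternative of rerunning the arguments directly in the branching setting). The only detail you omit is that the Burago--Ivanov approximation requires the branching foliation to be transversely oriented, which the paper handles by passing to a double cover.
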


\begin{proof} Note that \cite[Theorem 7.2]{BI} (see also \cite[Theorem 3.3]{BFP}) shows that a transversely oriented 
branching foliation $\cF$ can be approximated by a true foliation $\cF_\eps$ together with a continuous and surjective map $h_\eps: M \to M$, which
is $\eps$ close to the identity, and is a local diffeomorphism from $L \in \cF_\eps$ to $h_\eps(L) \in \cF$. 
In particular the $\eps$-close property shows that the approximating
foliation is also uniform.
Moreover, the map $h_\eps$ lifted to $\mt$
induces a homeomorphism between the leaf spaces (see \cite[Theorem 3.3 (ii)]{BFP}). In this way, we can associate to a uniform branching foliation $\cF$ another uniform (non branching) foliation $\cF_\eps$ and thus apply Theorem \ref{teo-Rcovered} to $\cF_\eps$ to obtain the same statement for $\cF$. 

To show the minimality, we can either use the same arguments as in the previous section (which work without modifications), or alternatively, we can also use $\cF_\eps$ and note that $h_\eps$ induces a conjugacy between the actions in the universal circle. 

There is always a double cover of $M$ so that $\cF$ lifts to a transversely
oriented branching foliation, so the result follows.
\end{proof}

\medskip 
\medskip

\emph{Acknowledgements:} S.F was partially supported by Simons Foundation grant number 637554. R. P. was partially supported by CSIC 618, FCE-1-2017-1-135352. This work was completed while R.P. was a Von Neumann fellow at IAS, funded by the Minerva Research Foundation Membership Fund and NSF DMS-1638352. The authors thank the referee for a very careful reading that helped improve the paper including spotting some inaccuracies.

\end{document}